\documentclass{siamart220329} 
\usepackage[bb=stixtwo]{mathalpha} 
\usepackage{amsfonts, amssymb}
\usepackage{silence}
\usepackage{wrapfig, graphicx, enumitem, xcolor, url, booktabs, multirow, subcaption}
\usepackage{microtype} 

\definecolor{NCSUred}{RGB}{153, 0, 0}
\definecolor{NCSUgreen}{RGB}{0, 132, 115}
\definecolor{NCSUblue}{RGB}{65, 86, 161}
\definecolor{NCSUorange}{RGB}{209, 73, 5}

\newcommand{\mc}[1]{\mathcal{#1}}
\newcommand{\ms}[1]{\mathsf{#1}}
\newcommand{\mr}[1]{\mathrm{#1}}

\newcommand{\mbb}[1]{\mathbb{#1}}
\newcommand{\rg}[1]{\mathring{#1}}
\newcommand{\mR}{\mathbb{R}}
\newcommand{\mC}{\mathbb{C}}
\newcommand{\mN}{\mathbb{N}}
\newcommand{\xD}[1]{\mr{d} #1}
\newcommand{\xDD}[2]{\frac{\xD{#1}}{\xD{#2}}}

\newcommand{\bra}[1]{\left( #1 \right)}
\newcommand{\Bra}[1]{\left[ #1 \right]}
\newcommand{\BRA}[1]{\left\{ #1 \right\}}

\newcommand{\ip}[2]{\left\langle #1, \, #2 \right\rangle}

\newcommand{\ve}{\varepsilon}
\renewcommand{\ker}{\mr{ker}\,}
\newcommand{\ran}{\mr{ran}\,}
\renewcommand{\Re}{\mr{Re}\,}

\newsiamremark{remark}{Remark}

\headers{Koopman Observability and Koopman--Luenberger Observer}{W.~TANG} 
\title{Koopman Observability of Nonlinear Systems and Data-Driven
Koopman--Luenberger Observer
\thanks{Submitted to the editors on September 28, 2026.
\funding{This work was supported by the National Science Foundation under Award No.~2414369 and CAREER Award No.~2543029.}}}
\author{Wentao Tang\thanks{Department of Chemical and Biomolecular Engineering, North Carolina State University, Raleigh, North Carolina, U.S.A. (email: {\tt wtang23@ncsu.edu}).}}

\begin{document}
\maketitle

\begin{abstract}
While nonlinear systems can be generally described as infinite-dimensional linear systems via their Koopman semigroups, the \emph{direct synthesis of state observers} on the premise of learned Koopman operator models, in an analogous manner to finite-dimensional linear systems, remains an open problem.
In this paper, the concepts of \emph{Koopman observability} (exact and approximate), \emph{Koopman  detectability} (exponential and strong), and \emph{Koopman--Luenberger observer} of nonlinear systems are provided.
By defining the Koopman semigroup on a reproducing kernel Hilbert space (RKHS), the conditions for Koopman exponential detectability are established, which involves solving an \emph{operator Lyapunov inequality} that only implicitly involves the the infinitesimal generator of the Koopman semigroup. 
Through the use of a finite-horizon Yosida approximation of the Koopman generator and an empirical sampling operator, the data-driven solution of the observer gain can be reduced to a finite-dimensional subspace specified by the sample and hence fulfilled by convex semidefinite programming. 
The state observation error is guaranteed to be bounded in mean squares, under closed-loop dissipation conditions on the sampling operator and its empirical operator. 
\end{abstract}

\begin{keywords}
observability, observer synthesis, nonlinear systems, infinite-dimensional systems, reproducing kernel Hilbert space
\end{keywords}

\begin{AMS}
93B07, 93B53, 93C25, 93C10, 47D06, 46E22
\end{AMS}

\section{Introduction}\label{sec:intro}
The operator-theoretic approaches for nonlinear systems have gained attention of researchers in the recent decade \cite{mauroy2020koopman, mezic2021koopman, brunton2022modern}. 
Using the Koopman operator (in discrete time) or Koopman semigroup (in continuous time), a nonlinear system can be viewed as an infinite-dimensional linear systems on a function space. 
The learning of Koopman operator (semigroup) from data, originated from dynamic mode decomposition methods \cite{bagheri2013koopman, williams2015data} and more recently analyzed in depth through statistical learning theory \cite{klus2020eigendecompositions, kostic2022learning, nuske2023finite}, shows a promise of operator-based data-driven control \cite{tang2022data}. 
Intuitively, one would hope that, by directly using the learned operator models, the control-theoretic problems for nonlinear systems and their solutions can \emph{resemble and generalize} the ones for linear systems in finite dimensions; such an idea \cite{korda2018linear}, however, does not na{\"{i}}vely come true without using formal analysis. 

\par While extensive practical studies aimed to connect Koopman operator learning to nonlinear controllers and observers \cite{otto2021koopman} and apply Koopman-based controllers \cite{narasingam2019koopman, bruder2020data}, arguably, a rigorous foundation of Koopman-based control and observation remains incomplete \cite{haseli2025two, strasser2026overview}. 
So far, \emph{direct synthesis} formulations for observers and controllers based on Koopman operators are essentially absent. This is largely due to the following three major issues. 
\begin{enumerate}[label=(\roman*)]
    \item \textit{Bilinearity due to inputs.} For a controlled system, even an input-affine one ($\dot{x} = f(x)+G(x)u$), the operator representation of the model must be at least \emph{bilinear} \cite{goswami2021bilinearization, iacob2024koopman, morris-ye-tang2026}. 
    Hence, control methods that assume fully linear lifted models \cite{proctor2018generalizing, korda2018linear} tend to lack accuracy and stability guarantees. 
    Under bilinearity, on the other hand, direct synthesis is much harder and often results in weaker stability guarantees, suboptimal design, or still computationally expensive MPC \cite{strasser2024koopman, bold2025kernel}. 
    In several works on Koopman-based optimal control \cite{huang2022convex, bevanda2024kernel}, the bilinear term combining the inputs and occupation measure (or probability densities) was considered holistically in dual (measure-based) formulations, which is effective for stochastic systems with non-vanishing densities. 
    \item \textit{Violation of invariance.} Despite the easiness of defining the Koopman operator on a space spanned by a finite dictionary, the operator (semigroup) tends not to be invariant in finite dimensions \cite{haseli2026modeling}. 
    Although single-step prediction errors can be bounded statistically \cite{korda2018convergence, bevanda2024nonparametric}, the effect of the lack of invariance on control or observation is not easy to characterize. 
    A more rigorous approach is to adopt a \emph{reproducing kernel Hilbert space (RKHS) that is equivalent to a Sobolev space} and thus invariant under sufficiently smooth dynamics \cite{hou2023sparse, bevanda2024nonparametric, kohne2025error}. 
    When the approximation on the RKHS has a uniformly bounded error, the closed-loop stability under \emph{given controllers} can be certified by carrying the errors in the model-based proofs \cite{bold2025kernel} and \emph{policy evaluation} (i.e., assessing performance under the given controller) can be approximated \cite{breiten2023approximability, breiten2025unifying, tang2025koopman}. 
    \item \textit{Lack of operator-based stability criteria.} A fundamental difficulty of direct operator-based synthesis lies in the lack of \emph{spectrum--stability relation}, which is the cornerstone of classical linear systems and control theory.\footnote{More explicitly, a Koopman operator (semigroup) with spectrum on the unit disk (left half plane) may not be related to stability. For example, if using a radial kernel that specifies a Sobolev-type RKHS (as in \cite{strasser2025kernel, bold2025kernel}; see \Cref{sec:prelim}), the spectral radius of the Koopman operator is always at least $1$, regardless of stability. }  
    For analytical systems on the complex plane ($\mC$) or $\mC^d$, the Bargmann function space can be used to analyze the Koopman operator \cite{mezic2020spectrum}. 
    In \cite{tang-ye2025koopman}, a linear--radial product kernel is proposed to define a RKHS, on which the Koopman spectrum--stability relation is found to be analogous to linear systems (see \Cref{subsec:linear--radial}). This construction allows the direct estimation of Lyapunov functions \cite{tang-ye2026koopman} and dissipativity \cite{tang-ye2026dissipativity} based on Koopman operator learning using linear operator inequalities, as well as the estimation of Zubov function that characterizes the domain of attraction \cite{tang2026data}. 
\end{enumerate}

\par This paper focuses on constructing a \emph{Koopman theory and method of state observation} for continuous-time autonomous nonlinear systems.\footnote{The controller synthesis problem is not attempted in the present paper. The author will address this issue, focusing on resolving the bilinearity due to the existence of inputs, in a collateral work.}  
Specifically, \emph{Koopman observability and detectability} are defined in a similar way to those in infinite-dimensional linear systems theory \cite{curtain2020introduction}, but accounting for the fact that the observation target is only the finite-dimensional state instead of its ``lifted'' representation in the infinite-dimensional RKHS. 
Under suitable conditions, a \emph{Koopman--Luenberger observer}, specified by its observer gain as a bounded linear operator on the RKHS, is expected to be obtained by solving an operator Lyapunov inequality, whose numerical solution is to be approximated from a data-based estimate of the infinitesimal generator of the Koopman semigroup. 
As such, a convex optimization-based approach is provided for the synthesis of a fully linear Luenberger-type observer, which is ideally infinite-order and realistically approximated by data as a finite-order one. Such a type of observer for nonlinear systems is novel. 

\par Indeed, in the recent literature, a considerable amount of studies have used data-driven approaches, especially neural network-based approximators, to obtain nonlinear state observers \cite{peralez2021deep, alvarez2024nonlinear, tang2024synthesis, woelk2026neural}. These works utilize the Kazantzis--Kravaris/Luenberger observer structure \cite{kazantzis1998nonlinear, andrieu2006existence, bernard2018luenberger}, where the states are injectively mapped into some observer states in a higher but still finite-dimensional space and the pseudo-inverse of such an injection can be estimated from data. 
It was then understood that the use of Koopman eigenfunctions or Koopman spectrum, which covers the evolution in infinite dimensions, can be helpful to efficiently estimate the injection mapping therein and the left pseudo-inverse of this injection (which returns the state estimates) \cite{ni2026, tang2025data}. On the other hand, the estimation of the injection's pseudo-inverse tends to be cumbersome. 

This paper, therefore, considers more generic Luenberger-type observers, where \emph{the injection becomes infinite-dimensional}, valued in an RKHS, and the retrieval of state from the injection is linear. 
The idea of such a general Koopman-type observer originated in Surana et al. \cite{surana2016linear} and was also used in Mesbahi et al. \cite{mesbahi2021nonlinear}, assuming that a finite number of Koopman eigenfunctions can be accurately estimated and designing a Luenberger observer in their span.\footnote{Such an eigenfunction approach is, however, theoretically vulnerable. The assumption that the output mappings and the state component mappings $e_k: x\mapsto x_k$ exactly lie in the span of these eigenfunctions, is typically unverifiable if not false at all. When the Koopman spectrum is not discrete, eigenfunctions do not even exist.}
More recently in Mohet et al. \cite{mohet2025dual}, an RKHS-based approach was proposed, which avoided the limitations of Koopman eigenfunctions, but did not lead to an explicit condition or synthesis formulation.\footnote{Specifically, while \cite{mohet2025dual} regarded the system in RKHS as described by an operator pair $(A, C)$, and stated the problem of observer synthesis as to find an operator $L$ such that $A-LC$ has its spectrum on the left half plane, the authors ended up not providing the conditions for existence of $L$ or how to solve $L$ from $(A, C)$. In fact, such a requirement on $A-LC$ can be too strong to be satisfied and is not necessary for state observation.} 
In the author's recent paper \cite{ye2026}, a Koopman-based robust observer synthesis approach was proposed, where the Koopman model was approximated by extended dynamic mode decomposition (EDMD) with a monomial dictionary and hence, but guaranteed to have a sectorially bounded error that allows direct synthesis of a finite-order observer. 

The contributions of this work are as follows. 
\begin{enumerate}[label=(\roman*)]
    \item \emph{Koopman observability and detectability} are defined for the first time, by placing the Koopman semigroup in a properly chosen RKHS (\Cref{subsec:KO}, \Cref{subsec:KOS}). The explicit conditions in the case where the Koopman semigroup has a discrete spectrum are given (\Cref{subsec:KO.test}). 
    \item Explicit conditions for the existence and synthesis of an infinite-dimensional observer gain (operator) are provided, boiling down to an \emph{operator Lyapunov inequality} on the RKHS that involves the \emph{infinitesimal generator of the Koopman semigroup} (\Cref{subsec:KD}). 
    \item The Koopman generator, as an unbounded operator, is related to two integral operators, and via Yosida approximation, the operator Lyapunov inequality is converted into a version with only compact operators. With finite data trajectories, the \emph{empirical version of the Lyapunov inequality} is provided (\Cref{subsec:compactified}), and the effect of learning error on observer performance is analyzed (\Cref{subsec:learning.error}). 
    \item For practical implementation, the infinite-order observer must be approximated by a finite order one. It is proved that the finite-order observer using the solution of the empirical Lyapunov inequality leads to a state observation error that converges to zero asymptotically (\Cref{subsec:performance}). The elementary expressions in data-based finite-dimensional space for the operator computations are given (\Cref{subsec:elementary}). 
    \item The Koopman--Luenberger observer synthesis is applied to different cases in \Cref{sec:numerical}, confirming its performance despite complicated state-space geometries. 
\end{enumerate}
Technically, the Koopman observability and Koopman observer theory presented in this paper is built on the existing semigroup theory of infinite-dimensional linear systems \cite{li1995optimal, curtain2020introduction}, mainly intended for studying linear partial differential equations and retarded equations. The use of this language for Koopman-based observation was noted in \cite{mohet2025dual}.

\paragraph{Notation} We use lower-case Latin and Greek letters for scalar and vector-valued objects, and upper-case letters for matrices and operators. Matrix approximations of operators will use Sans Serif fonts (e.g., $\ms{P}$ for $P$). $\mN=\{1, 2, \cdots\}$, $\mR_+=[0, \infty)$, $\mR^d$, and regions or intervals thereon use blackboard fonts. The component functions $x\mapsto x_k$ are denoted as $e_k$, while the standard basis vectors of $\mR^d$ are denoted as $\ms e_k$ ($k=1,\cdots,d$) in Sans Serif for the sake of distinction. 
\par Function spaces are denoted by normal ($H^s(\mbb{X})$ -- Sobolev space, namely $W^{s,2}(\mbb X)$) or calligraphic upper-case letters (e.g., $\mc{H}_\kappa$ -- RKHS). Operator spaces use calligraphic letters (e.g., $\mc{B}$ -- space of bounded operators). Norms are denoted by $\|\cdot\|$ or $|\cdot|$ (in $\mR^d$), inner products are $\ip{\cdot}{\cdot}$, and outer product between two vectors on Hilbert space (which forms a rank-$1$ operator) is denoted by $\times$, i.e., $(z_1\times z_2)z_3 = \ip{z_2}{z_3}z_1$ (instead of $\otimes$ to avoid confusion with the Kronecker product which makes $\ip{z_1\otimes z_2}{z_3\otimes z_4} = \ip{z_1}{z_3}\ip{z_2}{z_4}$). A positive (semi-)definite matrix or positive (nonnegative) operator is indicated by $P\succ(\succeq)0$, and hence $P\succeq Q$ indicates $P-Q\succeq 0$ (and similarly for $\succ$, $\preceq$, and $\prec$). For $a, b\geq0$, we write $a\lesssim (\propto) b$ to mean that $a\leq(=) \mr{const}\cdot b$.

\section{Preliminaries on Semigroups and RKHS}\label{sec:prelim}

We consider a nonlinear system 
\begin{equation}\label{eq:sys}
    \dot{x}(t) = f\bra{x(t)}, \enspace y(t) = h\bra{x(t)} 
\end{equation}
whose state space is a subset $\mbb{X}$ of $\mR^d$. We assume that the flow of $f$ is complete in $\mbb{X}$, i.e., for all $t\in \mR_+$, the solution map $S^t$ that maps $x(s)$ to $x(s+t)$ on any orbit $x(\cdot)$ for any $s\in \mR$ maps $\mbb{X}$ into $\mbb{X}$. Thus, $\{S^t\}_{t\in \mR_+}$ is a one-parameter semigroup. We consider $h: \mbb{X}\rightarrow \mR^m$ to have sufficient regularity (to be specified later). 
Using operator-theoretic notions, the system \eqref{eq:sys} can be formally converted to an infinite-dimensional linear system. Such a treatment shall be useful for observer synthesis. 

\subsection{Koopman Semigroup Theory}
\begin{definition}
    The \emph{Koopman semigroup} of system \eqref{eq:sys}, $\{K^t\}_{t\in \mR_+}$, defined on a space $\mc{G}$ of state-dependent functions\footnote{Such a state-dependent function $g\in \mc G$ is often called an ``observable'' in the literature. Here let us avoid this terminology as it may hamper the discussion of state observation problems.}, is given by $K^t g = g\circ S^t$ for all $g\in \mc{G}$. (For well-definedness, $\mc{G}$ must be closed under the action of $K^t$ for all $t\in \mR_+$.) 
    If $\mc{G}$ is a Hilbert space (with inner product $\ip{\cdot}{\cdot}_{\mc{G}})$, then the adjoint Koopman semigroup $\{T^t\}$ ($T^t = K^{t*})$, is called the \emph{Perron--Frobenius semigroup}. In other words, $\ip{T^tg_1}{g_2}_{\mc{G}} = \ip{g_1}{K^tg_2}_{\mc{G}}$ for all $g_1,\,g_2\in \mc{G}$.
\end{definition}

The choice of function class $\mc{G}$ obviously depends on the regularity of $f$. We are in particular interested in Hilbert spaces, as infinite-dimensional linear systems theory typically adopts a Hilbert space formulation \cite{curtain2020introduction}. 
Here it is justified that the Sobolev--Hilbert space $H^s(\mbb X)$ can be used.\footnote{The idea was first proposed in \cite{kohne2025error} for discrete-time systems, where the boundedness of Koopman operator was proved. The extension to continuous-time system and the proof here for the strong continuity of Koopman semigroup is natural.} 
Specifically, 
\begin{equation*}
    H^s(\mbb X) = \BRA{ g: \mbb{X}\to\mR: \|g\|_{H^s}^2 := \int_{\mbb X} |g(x)|^2 \xD{x} < \infty}. 
\end{equation*}
\begin{proposition}\label{prop:invariance.1}
    If $f\in C_\mr{b}^s(\mbb{X})^d$ for some $s\in\mbb{N}$ (i.e., all components of $f$ and their derivatives up to $s$-th order are bounded in $\mbb{X}$), then $\{K^t\}$ is a strongly continuous semigroup on $\mc{G} = H^{s'}(\mbb{X})$ for all $0\leq s'\leq s$. 
\end{proposition}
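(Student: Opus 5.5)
The plan is to treat the flow $S^t$ as a family of $C^s$ diffeomorphisms onto their images, with uniform control on compact time intervals. There are three steps: (i) uniform boundedness of $K^t$ on $H^{s'}$ for $t\in[0,1]$; (ii) strong continuity on a dense subspace; (iii) a density argument. The semigroup property $K^{t+\tau}=K^tK^\tau$ follows directly from $S^{t+\tau}=S^\tau\circ S^t$, so only boundedness and strong continuity need work. (The displayed norm in the definition of $H^s$ only shows the $L^2$ part; I will use the standard Sobolev norm summing $\|\partial^\alpha g\|_{L^2}^2$ over $|\alpha|\le s'$.)

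\textbf{Step 1: flow estimates.} Since $f\in C_\mr{b}^s$, differentiating $\dot x=f(x)$ with respect to the initial condition gives the variational equations for $DS^t$ and higher derivatives up to order $s$. Gronwall's inequality then yields $\sup_{x\in\mbb X}|\partial^\alpha S^t(x)|\le C_T$ for $|\alpha|\le s$ and $t\in[0,T]$. By Liouville's formula,
\[
\det DS^t(x)=\exp\Bigl(\int_0^t \mr{div} f(S^\tau x)\,\xD{\tau}\Bigr)\ge e^{-t\|\mr{div} f\|_\infty}.
\]
This lower bound lets us change variables $z=S^t(x)$ with Jacobian factor at most $e^{t\|\mr{div} f\|_\infty}$, using injectivity of $S^t$ (backward uniqueness of the ODE).

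\textbf{Step 2: boundedness.} By the multivariate chain rule (Faà di Bruno), $\partial^\alpha(g\circ S^t)$ is a sum of terms of the form $(\partial^\beta g)\circ S^t$, with $|\beta|\le|\alpha|$, multiplied by products of derivatives of $S^t$ of order at most $|\alpha|\le s$. These products are bounded by Step 1. Squaring, integrating, and changing variables gives $\|K^tg\|_{H^{s'}}\le M e^{\omega t}\|g\|_{H^{s'}}$. For non-integer intermediate $s'$, interpolation between integer orders would handle it, but the statement takes $s'$ an integer.

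\textbf{Step 3: strong continuity.} Take $g$ in a dense subclass, for instance smooth functions with bounded support intersected with $\mbb X$, or $C^{s'+1}$ functions in $H^{s'}$. For such $g$, $\partial^\beta g\circ S^t\to\partial^\beta g$ uniformly, and $\partial^\alpha S^t\to\partial^\alpha\mr{id}$ uniformly as $t\to0^+$; the latter follows from the variational equations, whose right-hand sides are bounded. Dominated convergence on the relevant bounded region then gives $\|K^tg-g\|_{H^{s'}}\to0$. The uniform bound from Step 2 and an $\varepsilon/3$ argument extend this to all of $H^{s'}(\mbb X)$.

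\textbf{Main obstacle.} The delicate point is the domain $\mbb X$. We need density of regular functions in $H^{s'}(\mbb X)$, which requires some regularity of $\partial\mbb X$, such as the segment or extension property. We also need $S^t(\mbb X)\subseteq\mbb X$ so that the change of variables stays inside $\mbb X$. I would assume $\mbb X$ is a Lipschitz domain, or handle it by extending $g$ to $\mR^d$ with a Stein extension operator, proving everything on $\mR^d$ with $f$ extended in $C_\mr{b}^s$, and then restricting. The estimate on the top-order derivatives of $S^t$ in Step 1, which needs exactly $s$ bounded derivatives of $f$, is the other point to check carefully.
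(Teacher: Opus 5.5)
Your proposal is correct, and it follows a more complete route than the paper's proof.

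\textbf{What the paper does.} It reduces to $s'=s$ and expands $\partial^\alpha(g\circ S^t)$ by the Fa\`a di Bruno formula. It then uses $S^t(x)=x+O(t)$ to assert that $\partial^\alpha(K^tg)\to\partial^\alpha g$ pointwise for every $g\in H^s(\mbb X)$, and invokes Lebesgue's convergence theorem. This is shorter and handles arbitrary $g$ without a density step, but it leaves three things open:
\begin{enumerate}
\item it never shows that $K^t$ maps $H^s(\mbb X)$ boundedly into itself, which is part of $\{K^t\}$ being a strongly continuous semigroup there;
\item it exhibits no dominating function for the convergence theorem;
\item pointwise convergence of $(\partial^\beta g)\circ S^t$ is not justified when $\partial^\beta g$ is merely an $L^2$ function.
\end{enumerate}

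\textbf{What your scheme supplies.} Your Gronwall bounds on $\partial^\alpha S^t$ and, crucially, the Liouville lower bound on $\det \mr{D}S^t$ turn the change of variables into
\[
\|(\partial^\beta g)\circ S^t\|_{L^2}\le \mr{e}^{t\|\mr{div} f\|_\infty/2}\|\partial^\beta g\|_{L^2}.
\]
This gives invariance of $H^{s'}(\mbb X)$ and the bound $\|K^t\|\le M\mr{e}^{\omega t}$. Convergence is then needed only on a dense class where it is genuinely uniform, and the $\varepsilon/3$ argument finishes the proof. You also treat each $s'$ directly; the paper's reduction to $s'=s$ would itself need a separate bound on $H^{s'}$. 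The price is that you must state what the paper leaves implicit: enough regularity of $\partial\mbb X$ for density (or an extension operator), and $S^t(\mbb X)\subseteq\mbb X$, which is the paper's completeness assumption.

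\textbf{One correction.} The statement does not say $s'$ is an integer, so your interpolation remark is actually needed. For $j<s'<j+1\le s$, set $\theta=s'-j$. Boundedness follows by interpolating the $H^j$ and $H^{j+1}$ bounds. Strong continuity follows from
\[
\|K^tg-g\|_{H^{s'}}\lesssim\|K^tg-g\|_{H^j}^{1-\theta}\|K^tg-g\|_{H^{j+1}}^{\theta}
\]
on the dense subspace $H^{j+1}(\mbb X)$, together with the uniform bound.
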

\begin{proof}
    We only need to prove for $s'=s$. For any $g\in H^s(\mbb{X})$, the generalized derivative $\partial^\alpha (K^tg)(x) = \partial^\alpha (g\circ S^t)(x)$ with respect to any multi-index $\alpha$ of length $r\leq s$, by the chain rule, contains terms in the form of a product of $\partial^\beta g(S^t(x))$ and partial derivatives of $S^t$, where $\beta\leq \alpha$. In particular, the term with $\beta=\alpha$ is followed by $\partial_{\alpha_1} S^t(x)\cdots \partial_{\alpha_d} S^t(x)$, while every other term has a factor of higher-order derivatives of $S^t$. 
    This is known as the multivariate version of the Fa{\`{a}} di Bruno's formula. Given the conditions, $S^t \in C^s(\mbb{X})$ and at small $t$, $S^t(x)=x+O(t)$. 
    Therefore, $\partial^\alpha (K^tg)\rightarrow \partial^\alpha g$ pointwise, and hence in $L^2$ due to Lebesgue convergence theorem, as $t\downarrow 0$. That is, $\lim_{t\downarrow 0} K^tg = g$ in $H^s(\mbb{X})$ for all $g\in H^s(\mbb{X})$. 
\end{proof}

The $H^s$ space, however, does not intrinsically contain any construction pertaining to the \emph{equilibrium point}. In many applications (especially if the observer synthesis problem is associated with control), it is known \textit{a priori} that the origin $0\in \mR^d$ is an equilibrium point (i.e., $f(0)=0$, usually after translating the state coordinates), then it is convenient to revise the function spaces, with the purpose of restricting to functions that take zero values at the origin.
\begin{definition}
    For any function space $\mc{G}$, we denote the function space with linear factors:
    \begin{equation}
        \rg{\mc{G}} = \BRA{ \sum_{k=1}^d e_kg_k: g_1, \cdots, g_d\in \mc{G} } \text{ with } e_k(x) = x_k, \, k=1,\cdots,d. 
    \end{equation}
    Functions $e_k$ are component functions. 
\end{definition}
We are particularly interested in $\rg{H}^s(\mbb{X})$, which can be viewed as a rigorized infinite-dimensional version of the ``monomial dictionary'' used in extended dynamic mode decomposition \cite{williams2015data} or monomial-like basis functions in Koopman-based control works \cite{strasser2024koopman}. 
The following conclusion, extended from the discrete-time case in \cite{tang-ye2025koopman}, can be proved in a similar manner to the foregoing proposition (e.g., see \cite{ye2026}). 
\begin{proposition}\label{prop:invariance.2}
    If $f\in \rg{C}^s_{\mr{b}}(\mbb{X})^d$ for some $s\in\mN$, then $\{K^t\}$ is a strongly continuous semigroup on $\rg{H}^{s'}(\mbb{X})$, $\forall s'\in[0,s]$. 
\end{proposition}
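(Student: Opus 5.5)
We will show three things on $\rg{H}^{s}(\mbb X)$: that it is invariant under every $K^t$, that the $K^t$ are uniformly bounded for small $t$, and that $K^t g\to g$ strongly as $t\downarrow 0$. As in \Cref{prop:invariance.1}, it suffices to treat $s'=s$; the same argument applies verbatim to smaller $s'$.

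\emph{Invariance.} The hypothesis $f\in \rg{C}^s_{\mr b}(\mbb X)^d$ means $f(x)=F(x)x$ for a matrix-valued $F$ with entries in $C^s_{\mr b}(\mbb X)$. In particular $f(0)=0$, so $S^t(0)=0$. We will write $S^t(x)=\Phi^t(x)x$ with
\[
\Phi^t(x)=\int_0^1 \partial S^t(\theta x)\,\xD{\theta}
\]
(assuming $\mbb X$ is star-shaped about $0$, or arguing locally). The variational equation $\partial_t \partial S^t = \partial f(S^t)\,\partial S^t$, with $\partial f\in C^{s-1}_{\mr b}$, together with Gronwall bounds on the higher variational equations, gives $S^t\in C^s$ with derivatives bounded locally uniformly in $t$. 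To get $\Phi^t\in C^{s}_{\mr b}$ we use the factored form directly instead: $\dot S^t = F(S^t)S^t$, so $\Phi^t$ solves $\dot\Phi^t = F(\Phi^t x)\Phi^t$ with $\Phi^0=I$. Its derivatives up to order $s$ are then controlled by Gronwall, since $F\in C^s_{\mr b}$.

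For $g=\sum_k e_k g_k$ we then have
\[
K^tg(x)=\sum_k \bigl(\Phi^t(x)x\bigr)_k\, g_k(S^t(x))=\sum_j x_j \sum_k \Phi^t_{kj}(x)\,(K^tg_k)(x).
\]
Each coefficient $\sum_k \Phi^t_{kj}\,K^tg_k$ lies in $H^s$, because $K^t g_k\in H^s$ by the Fa\`a di Bruno argument of \Cref{prop:invariance.1} and multiplication by a $C^s_{\mr b}$ function preserves $H^s$. Hence $K^tg\in\rg{H}^s$.

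\emph{Norm and boundedness.} We equip $\rg{H}^s$ with the quotient norm
\[
\|g\|=\inf\Bigl\{\bigl(\textstyle\sum_k\|g_k\|_{H^s}^2\bigr)^{1/2}: g=\sum_k e_kg_k\Bigr\}.
\]
Using $\|K^t g_k\|_{H^s}\le M e^{\omega t}\|g_k\|_{H^s}$ and $\|\Phi^t\|_{C^s_{\mr b}}$ bounded on $[0,1]$, we obtain $\sup_{t\le1}\|K^t\|<\infty$ on $\rg{H}^s$. The semigroup property is inherited from $S^{t+r}=S^t\circ S^r$.

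\emph{Strong continuity.} For a fixed representation $g=\sum_k e_kg_k$ we bound
\[
\|K^tg-g\|\le \sum_j\Bigl\|\sum_k \Phi^t_{kj}K^tg_k-g_j\Bigr\|_{H^s}
\le \sum_{j,k}\Bigl(\|(\Phi^t_{kj}-\delta_{kj})K^tg_k\|_{H^s}+\delta_{kj}\|K^tg_k-g_k\|_{H^s}\Bigr).
\]
The second term tends to zero by \Cref{prop:invariance.1}. The first is at most $\|\Phi^t-I\|_{C^s_{\mr b}}\,\|K^tg_k\|_{H^s}$, which tends to zero because $\Phi^t\to I$ in $C^s_{\mr b}$ (from the integral equation for $\Phi^t$) and $\|K^tg_k\|_{H^s}$ is uniformly bounded.

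\emph{Main obstacle.} The delicate step is the regularity claim $\Phi^t\to I$ in $C^s_{\mr b}$ with bounds uniform in $x$. I would prove it by differentiating the ODE for $\Phi^t$ up to order $s$ and applying Gronwall on each derivative. This requires controlling growth of $x$ on unbounded $\mbb X$; if that fails, I would restrict to bounded $\mbb X$, as is implicit in the setting of \Cref{prop:invariance.1}.
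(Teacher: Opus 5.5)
Your proof is correct on bounded $\mbb X$, and it is more complete than what the paper gives. The paper has no separate argument for this proposition. It only says the result follows ``in a similar manner'' to \Cref{prop:invariance.1}: expand $\partial^\alpha(g\circ S^t)$ by the Fa\`a di Bruno formula, use $S^t(x)=x+O(t)$, and pass to the limit by dominated convergence. That sketch covers convergence of derivatives only. It never explains why $g\circ S^t$ again has the form $\sum_j e_j\tilde g_j$ with $\tilde g_j\in H^s$, nor which norm $\rg H^s$ carries.

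Your factorization $S^t(x)=\Phi^t(x)x$ supplies exactly this structural step. Here $\Phi^t$ solves the linear equation $\dot\Phi^t=F(S^t(x))\Phi^t$, $\Phi^0=I$, and $\Phi^t(x)x=S^t(x)$ follows by uniqueness. The factorization reduces invariance, the uniform bound for $t\in[0,1]$, and strong continuity to \Cref{prop:invariance.1} plus a $C^s_{\mr b}$-multiplier estimate, rather than redoing the chain-rule computation with linear factors. Your quotient norm is also precisely the RKHS norm of the product kernel $\rg\kappa$ (up to the equivalence $H^s\simeq\mc H_\kappa$), so the conclusion transfers directly to $\mc H_{\rg\kappa}$ as used later.

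Two minor clean-ups. First, once $\Phi^t$ is defined through the ODE, you can drop the star-shapedness caveat and the integral formula, which would only give $C^{s-1}$ anyway. Second, for non-integer $s'$ the reduction is not quite ``verbatim'': the multiplier estimate on $H^{s'}$, and $\|\Phi^t-I\|\to 0$ as a multiplier there, need an interpolation step. The paper skips this point too.

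Your fallback to bounded $\mbb X$ is necessary, not just convenient. On unbounded $\mbb X$, $f=F(x)x$ with $F\in C^s_{\mr b}$ need not lie in $C^s_{\mr b}$, so \Cref{prop:invariance.1} is unavailable, and the statement itself can fail. Take $d=1$, $\mbb X=\mR$, $f(x)=x\sin x$:
\begin{itemize}
\item Since $e_1g_1$ determines $g_1$, we have $\|K^t(e_1g_1)\|_{\rg H^1}=\|\Phi^t\,(g_1\circ S^t)\|_{H^1}$.
\item Place a fixed bump $g_1$ at the unstable equilibrium $2m\pi$. There $f'\geq m\pi$ on a backward-invariant neighborhood, and the change of variables $y=S^t(x)$ gives $\|K^t(e_1g_1)\|_{\rg H^1}^2\geq c\,\mr e^{(m\pi-2)t}$, with $c>0$ independent of $m$ for large $m$.
\item Meanwhile $\|e_1g_1\|_{\rg H^1}$ stays fixed, so $K^t$ is unbounded.
\end{itemize}
Hence boundedness of $\mbb X$, as assumed in \Cref{prop:invariance.4} and the examples, must be part of the hypotheses. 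Under it your argument goes through.
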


Hence, if the system is regular enough to satisfy the condition of \Cref{prop:invariance.1} or \Cref{prop:invariance.2}, and the function space $\mc{G}$ is chosen in concert, then we have an infinite-dimensional linear system governed by the infinitesimal generator of the Koopman semigroup $\{K^t\}$ or its adjoint $\{T^t\}$. 
Naturally, we would like that the state $x(t)$ can always be boundedly extracted from the infinite-dimensional functional state. To this end, the construction of RKHS \cite{steinwart2008support} is needed.

\subsection{Representation on Reproducing Kernel Hilbert Spaces}
\begin{definition}
    Given $\kappa\in C(\mbb{X}\times \mbb{X}, \mR)$ such that for any $\{x^{(i)}\}_{i=1}^n \subset \mbb{X}$ with $n\in \mN$, $\Bra{ \kappa\bra{x^{(i)}, x^{(j)}} }_{i,j=1}^n \succeq 0$, the \emph{RKHS} with \emph{kernel} $\kappa$, denoted as $\mc{H}_\kappa(\mbb X)$ or simply $\mc H_\kappa$, is the closed span of all $\kappa(x, \cdot)$ with $x\in \mbb{X}$, endowed with inner product: $\ip{\kappa(x,\cdot)}{\kappa(x',\cdot)} = \kappa(x,x')$ and hence norm: $\|\kappa(x, \cdot)\| = \kappa(x,x)^{1/2}$. 
    The RKHS has the following \emph{reproducing property}: $\forall g\in \mc{H}_\kappa$, $\forall x\in \mbb{X}$, $\ip{g}{\kappa(x, \cdot)} = g(x)$. 
\end{definition} 

\par It is well-known that a Sobolev--Hilbert space with a large smoothness index $s>d/2$ coincides with an RKHS, with equivalent norms \cite{wendland2004scattered}. 
\begin{lemma}\label{lem:Sobolev-RKHS.1}
    Suppose that $\kappa$ is a \emph{radial kernel}, i.e., $\kappa(x, x') = \rho(|x-x'|)$ for some $\rho: \mR_+\rightarrow \mR_+$, and that the Fourier transform of $\rho$, $\hat{\rho}(\xi)$, satisfies $c_1(1+|\xi|^2)^{-s}\leq |\hat{\rho}(\xi)| \leq c_2(1+|\xi|^2)^{-s}$ for some constants $c_2\geq c_1>0$ and $s>d/2$. In addition, assume that $\mbb{X}$ is a region with Lipschitz boundary. Then $H^s(\mbb{X}) \equiv \mc{H}_\kappa(\mbb X)$.\footnote{
    The radial function that satisfies the condition can be easily constructed by the inverse Fourier transfer function, e.g., 
    \begin{equation} 
        \kappa(x,x')= \int_{\mR^d} \bra{1+|\xi|^2}^{-s} e^{-i\xi\cdot(x-x')}  \xD{\xi} \propto |x-x'|^\nu \mr{K}_\nu(|x-x'|) 
    \end{equation}
    where $\nu = s-d/2>0$ and $\mr{K}$ is the modified Bessel function of the second kind. This is known as the Bessel potential kernel. }
\end{lemma}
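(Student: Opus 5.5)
The plan is to prove the norm equivalence first on the whole space $\mR^d$ via the Fourier transform, and then to pass to the Lipschitz region $\mbb X$ through restriction and extension.

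\textbf{Step 1 (whole space).} The translation-invariant kernel $\kappa(x,x')=\rho(|x-x'|)$ with $\hat\rho\ge 0$ (Bochner) gives the standard characterization
\begin{equation*}
\mc H_\kappa(\mR^d)=\BRA{g\in L^2(\mR^d)\cap C(\mR^d): \|g\|_{\kappa}^2 := (2\pi)^{-d}\int_{\mR^d} \frac{|\hat g(\xi)|^2}{\hat\rho(\xi)}\,\xD{\xi}<\infty}.
\end{equation*}
I would verify the reproducing property directly: for $g$ in this set, Fourier inversion gives $\ip{g}{\kappa(x,\cdot)}_\kappa = (2\pi)^{-d}\int \hat g(\xi)\overline{\hat\rho(\xi)e^{-i\xi\cdot x}}/\hat\rho(\xi)\,\xD{\xi} = g(x)$. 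Completeness and density of the kernel sections then follow by the usual uniqueness of the RKHS attached to a kernel.

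On the other side, $\|g\|_{H^s(\mR^d)}^2\simeq\int(1+|\xi|^2)^s|\hat g(\xi)|^2\xD{\xi}$, which is Plancherel for integer $s$ and the definition otherwise. The two-sided bound $c_1(1+|\xi|^2)^{-s}\le\hat\rho(\xi)\le c_2(1+|\xi|^2)^{-s}$ immediately yields
\begin{equation*}
c_2^{-1}\|g\|_{H^s}^2\lesssim\|g\|_\kappa^2\lesssim c_1^{-1}\|g\|_{H^s}^2 .
\end{equation*}
The condition $s>d/2$ makes $(1+|\xi|^2)^{-s}$ integrable, so $\hat\rho\in L^1$. Hence $\rho$ is continuous and bounded, which is what we need for the kernel to be well-defined, and by the Sobolev embedding $H^s\hookrightarrow C_\mr{b}$ pointwise evaluation is bounded. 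So $H^s(\mR^d)=\mc H_\kappa(\mR^d)$ as sets, with equivalent norms.

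\textbf{Step 2 (restriction to $\mbb X$).} I would use the general RKHS restriction theorem: $\mc H_\kappa(\mbb X)=\{g|_{\mbb X}: g\in\mc H_\kappa(\mR^d)\}$ with the quotient (minimal-extension) norm $\|f\|_{\mc H_\kappa(\mbb X)}=\min\{\|g\|_\kappa: g|_{\mbb X}=f\}$. Likewise $H^s(\mbb X)$ equals the restriction space of $H^s(\mR^d)$ with the quotient norm, up to equivalence. This is the step that needs the Lipschitz boundary. Stein's universal extension operator $E: H^s(\mbb X)\to H^s(\mR^d)$ is bounded for Lipschitz domains and all $s$, and it gives $\inf\{\|g\|_{H^s(\mR^d)}: g|_{\mbb X}=f\}\le\|Ef\|\lesssim\|f\|_{H^s(\mbb X)}$. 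The reverse inequality is trivial, since restriction does not increase the norm. Combining this with Step 1, both quotient norms are infima of equivalent norms over the same affine set, so they are equivalent. This gives $H^s(\mbb X)\equiv\mc H_\kappa(\mbb X)$.

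\textbf{Main obstacle.} The main difficulty is Step 2, specifically the existence of a bounded extension operator for possibly fractional $s$ on merely Lipschitz domains. I would invoke Stein's extension theorem for integer $s$, and obtain non-integer $s$ by interpolation, or by Rychkov's universal extension operator, rather than reprove it. This matches the citation of \cite{wendland2004scattered}, where the result appears (Corollary 10.48).
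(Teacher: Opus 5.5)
Your proposal is correct and is essentially the paper's route. The paper gives no argument of its own and simply defers to \cite{wendland2004scattered}, whose proof is exactly your two steps: the Fourier characterization of the native space on $\mR^d$, which is norm-equivalent to $H^s(\mR^d)$, followed by restriction to $\mbb X$ using a bounded extension operator on the Lipschitz domain. One small wording fix: completeness of your Fourier-side space is a hypothesis of the RKHS uniqueness theorem, not a consequence of it, but it follows at once from the norm equivalence with $H^s(\mR^d)$ that you prove in the same step.
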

 
\begin{proposition}\label{prop:invariance.3}
    Suppose that $\mbb{X}$ is a bounded region with Lipschitz boundary and $f\in C^s(\mbb{X})^d$ for some $s>d/2$. Then $\{K^t\}$ is strongly continuous semigroup on the RKHS $\mc{H}_\kappa(\mbb{X})$ such that $H^s(\mbb{X}) = \mc{H}_\kappa(\mbb{X})$. 
\end{proposition}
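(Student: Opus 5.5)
The plan is to reduce the claim to \Cref{prop:invariance.1} and \Cref{lem:Sobolev-RKHS.1}. Boundedness of $\mbb X$ and $C^s$ regularity up to the boundary give $f\in C^s_{\mr b}(\mbb X)^d$. This is where boundedness is used: derivatives up to order $s$ are continuous on the compact closure, hence bounded, once $C^s(\mbb X)$ is read as $C^s(\overline{\mbb X})$. Take $s$ to be an integer here, so that \Cref{prop:invariance.1} applies as stated. Then $\{K^t\}$ is a strongly continuous semigroup on $H^s(\mbb X)$ with the norm $\|\cdot\|_{H^s}$.

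Next, take a radial kernel $\kappa$ satisfying the Fourier decay condition of \Cref{lem:Sobolev-RKHS.1}, for instance the Bessel potential kernel. Because $\mbb X$ has Lipschitz boundary, the lemma gives $H^s(\mbb X)=\mc H_\kappa(\mbb X)$ as sets with equivalent norms. That is, there are constants $0<a\le b$ such that
\begin{equation*}
    a\|g\|_{H^s}\le \|g\|_{\mc H_\kappa}\le b\|g\|_{H^s}\quad\text{for all } g.
\end{equation*}

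The semigroup property $K^{t+s'}=K^tK^{s'}$ and $K^0=I$ are purely algebraic consequences of the flow property $S^{t+s'}=S^t\circ S^{s'}$, so they do not depend on the norm. Invariance $K^t\mc H_\kappa\subseteq\mc H_\kappa$ transfers directly from invariance of $H^s$, since the two spaces coincide as sets. Boundedness of each $K^t$ also transfers: $\|K^tg\|_{\mc H_\kappa}\le b\|K^tg\|_{H^s}\le b\|K^t\|_{H^s}\|g\|_{H^s}\le (b/a)\|K^t\|_{H^s}\|g\|_{\mc H_\kappa}$. Strong continuity follows the same way, from $\|K^tg-g\|_{\mc H_\kappa}\le b\|K^tg-g\|_{H^s}\to0$ as $t\downarrow0$.

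The main point needing care is that \Cref{prop:invariance.1} is stated with $s\in\mN$ and bounded derivatives, whereas the present statement allows real $s>d/2$. If $s$ is not an integer, I would first try to apply \Cref{prop:invariance.1} with $\lceil s\rceil$ by assuming the corresponding extra regularity of $f$. Failing that, I would interpolate between $H^{\lfloor s\rfloor}$ and $H^{\lceil s\rceil}$, using the fact that a semigroup that is bounded on both endpoint spaces and strongly continuous is also bounded and strongly continuous on the interpolation space. A secondary subtlety is that $S^t$ maps $\mbb X$ into $\mbb X$, so the chain-rule argument in \Cref{prop:invariance.1} never evaluates $g$ outside the domain. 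This is already covered by the completeness assumption on the flow.
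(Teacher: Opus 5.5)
Your proposal is correct and matches the argument the paper intends; the paper gives no separate proof, but the proposition is meant to follow from strong continuity on $H^s(\mbb X)$ via \Cref{prop:invariance.1}, transferred to $\mc H_\kappa(\mbb X)$ through the norm equivalence of \Cref{lem:Sobolev-RKHS.1}. One caveat for non-integer $s$: your interpolation fallback does not actually weaken the hypothesis, because the bound on the endpoint $H^{\lceil s\rceil}$ itself comes from \Cref{prop:invariance.1} and requires $f\in C^{\lceil s\rceil}_{\mr b}(\mbb X)^d$, and under that assumption \Cref{prop:invariance.1} already covers $H^{s}$ directly through its $0\le s'\le \lceil s\rceil$ clause (the paper glosses over this point as well).
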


In the above statement, the kernel of the Sobolev-type RKHS is a radial function, i.e., dependent only on $|x-x'|$. We again consider modifying the space into $\rg{H}^s(\mbb{X})$ whose members are functions containing linear factors $e_1, \cdots, e_d$. As argued in the author's previous work \cite{tang-ye2025koopman}, $\rg{H}^s(\mbb{X})$ is an RKHS with ``\emph{linear--radial kernel}'' $\rg{\kappa}$.\footnote{In fact, $\rg H^s(\mbb{X})$ is the tensor product of $\mr{span}\{e_k\}_{k=1}^d$ (which is trivially a RKHS) and $H^s(\mbb{X})$, and hence the kernel $\rg\kappa$ is naturally the product kernel formed by the linear kernel $(x, x')\mapsto x\cdot x'$ and the radial kernel $\kappa$ that underlies $H^s(\mbb{X})$.}
\begin{lemma}\label{lem:Sobolev-RKHS.2}
    If $H^s(\mbb{X})=\mc{H}_\kappa$, then $\rg{H}^s(\mbb{X})=\mc{H}_{\rg{\kappa}}(\mbb{X})$, where $\rg{\kappa}(x,x')= (x\cdot x')\kappa(x,x')$ for any $x,x'\in \mbb{X}$. 
\end{lemma}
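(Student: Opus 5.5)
We identify $\mc{H}_{\rg\kappa}$ as the RKHS of the product kernel and then match it with $\rg{H}^s(\mbb X)$ as sets, with equivalent norms. The kernel $\rg\kappa$ is the pointwise product of the linear kernel $\ell(x,x')=x\cdot x'=\sum_{k=1}^d e_k(x)e_k(x')$ and $\kappa$. By the Schur product theorem its Gram matrices are positive semidefinite, so $\mc H_{\rg\kappa}$ is well defined.

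The standard description of sums and products of kernels then applies. Since $\rg\kappa(x,x')=\sum_{k=1}^d e_k(x)e_k(x')\kappa(x,x')$, each summand $\kappa_k(x,x')=e_k(x)\kappa(x,x')e_k(x')$ is the kernel obtained from $\kappa$ by multiplying by $e_k$. Hence $\mc H_{\kappa_k}=\{e_kg: g\in\mc H_\kappa\}$, with norm $\|u\|=\min\{\|g\|_{\mc H_\kappa}: e_kg=u\}$. The RKHS of a finite sum of kernels is the sum of the corresponding spaces, with the infimal-convolution norm $\|u\|^2=\inf\{\sum_k\|g_k\|^2_{\mc H_\kappa}: u=\sum_k e_kg_k\}$.

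Together these give
\[
\mc H_{\rg\kappa}=\Big\{\sum_{k=1}^d e_kg_k: g_k\in\mc H_\kappa\Big\}=\rg{\mc H}_\kappa .
\]
Invoking the hypothesis $\mc H_\kappa=H^s(\mbb X)$, which includes equivalent norms as in \Cref{lem:Sobolev-RKHS.1}, we obtain $\mc H_{\rg\kappa}=\rg H^s(\mbb X)$ as sets.

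The main step needing care is specifying what norm $\rg H^s(\mbb X)$ carries, since the definition only gives a set. The natural choice is the quotient-type norm $\inf\{(\sum_k\|g_k\|_{H^s}^2)^{1/2}: u=\sum_k e_kg_k\}$. Equivalently, $\rg H^s$ is the image of the bounded surjection $(g_1,\dots,g_d)\mapsto\sum_k e_kg_k$ from $H^s(\mbb X)^d$, equipped with the quotient norm modulo its kernel. With this choice, norm equivalence follows directly from the equivalence of $\|\cdot\|_{\mc H_\kappa}$ and $\|\cdot\|_{H^s}$, with the same constants. I would prove the two kernel facts (multiplier and sum) through the usual argument: define the candidate space with the infimum norm, check it is a Hilbert space via the quotient construction, and verify the reproducing property for $\rg\kappa(x,\cdot)$. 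The quotient must be taken by the closed kernel of the map, which is closed because the map is continuous, since point evaluations are bounded on $\mc H_\kappa$.

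If one instead wants $\rg H^s$ to be normed as a subspace of $H^s$, an extra argument is needed. In that case I would use the open mapping theorem, provided $\rg H^s$ is shown to be closed in $H^s$, which is true when $0\notin\overline{\mbb X}$ or requires a Hardy-type argument otherwise. I would state the lemma with the quotient norm to avoid this.
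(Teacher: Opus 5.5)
Your proof is correct and is essentially the paper's own justification made rigorous. The paper only cites \cite{tang-ye2025koopman} and remarks in a footnote that $\rg H^s(\mbb X)$ is ``the tensor product'' of $\mathrm{span}\{e_k\}_{k=1}^d$ and $H^s(\mbb X)$, with product kernel $\rg\kappa$. Your sum-and-multiplier argument is precisely the diagonal-restriction form of that remark, and it correctly records that $\mc H_{\rg\kappa}$ is the image of $\mc H_\kappa^d$ under $(g_k)\mapsto\sum_k e_kg_k$ with the quotient norm; it is not literally the tensor product, since for $d\geq 2$ this map kills e.g.\ $(e_2\phi,-e_1\phi,0,\dots,0)$.

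One refinement of your closing aside: when $0$ is interior to $\mbb X$, the quotient norm is forced rather than merely convenient. For $d/2<s<d/2+1$, take $g_1=\phi(x/\ve)$ with $\phi(0)=1$. Then $\|e_1g_1\|_{H^s}\lesssim\ve^{1+d/2-s}\to 0$, while the distance of $(g_1,0,\dots,0)$ to the kernel of that map stays bounded below, because every element of the kernel vanishes at $0$. Hence $\rg H^s$ is not closed in $H^s$, and no Hardy-type argument can make the subspace-norm reading of the lemma true.
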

\begin{proposition}\label{prop:invariance.4}
    Suppose that $\mbb{X}$ is a bounded region with Lipschitz boundary and $f\in \rg{C}^s(\mbb{X})$ for some $s>d/2$. Then $\{K^t\}$ is strongly continuous semigroup on the RKHS $\mc{H}_{\rg{\kappa}}$ such that $H^s(\mbb{X}) = \mc{H}_\kappa$ (and thus $\rg H^s(\mbb{X}) = \mc{H}_{\rg \kappa}$). 
\end{proposition}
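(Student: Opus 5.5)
The plan is to reduce the statement to earlier results rather than redo any semigroup estimates. Since $\mbb{X}$ is bounded with Lipschitz boundary and $s>d/2$, choose a radial kernel $\kappa$ as in \Cref{lem:Sobolev-RKHS.1}, for instance the Bessel potential kernel, so that $H^s(\mbb{X})=\mc{H}_\kappa$ with equivalent norms. \Cref{lem:Sobolev-RKHS.2} then gives $\rg H^s(\mbb{X})=\mc{H}_{\rg\kappa}$ with $\rg\kappa(x,x')=(x\cdot x')\kappa(x,x')$. I take the norm on $\rg H^s$ to be the quotient (tensor-product) norm $\inf\{(\sum_k\|g_k\|_{H^s}^2)^{1/2}: g=\sum_k e_kg_k\}$. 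Because the RKHS norm is equivalent to it, strong continuity and the semigroup property transfer from one space to the other. Hence it is enough to show that $\{K^t\}$ is a strongly continuous semigroup on $\rg H^s(\mbb{X})$.

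For invariance and strong continuity, I read the hypothesis $f\in\rg C^s(\mbb{X})$ as $f=\sum_j e_j\phi_j$ with $\phi_j\in C^s$. On a bounded $\mbb{X}$ this means $f(0)=0$ and $f$ is $C^s$ up to the boundary, so the derivatives are bounded and the hypothesis of \Cref{prop:invariance.2} holds with $C^s_{\mr b}$. Invoking \Cref{prop:invariance.2} then gives strong continuity on $\rg H^s(\mbb{X})$. To keep the argument self-contained, I would also sketch that step directly.
\begin{enumerate}[label=(\roman*)]
    \item Since $f(0)=0$, the origin is invariant, so $S^t(0)=0$.
    \item By Hadamard's lemma, $S^t_k(x)=\sum_j x_j\,\Phi^t_{kj}(x)$ with $\Phi^t_{kj}=\int_0^1\partial_jS^t_k(\theta x)\,\xD\theta$. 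This requires $\mbb{X}$ to be star-shaped about $0$, or else a local argument near $0$ combined with the fact that $e_k$ is bounded away from zero elsewhere.
    \item For $g=\sum_k e_kg_k$ we then get $K^tg=\sum_k S^t_k\,(g_k\circ S^t)=\sum_j e_j\sum_k \Phi^t_{kj}\,(g_k\circ S^t)$. Since $\Phi^t_{kj}\in C^{s-1}$ and $g_k\circ S^t\in H^s$ by \Cref{prop:invariance.1}, this shows $K^tg\in\rg H^s$.
    \item As $t\downarrow0$, $\Phi^t\to I$ and $g_k\circ S^t\to g_k$ in $H^s$, so $K^tg\to g$.
\end{enumerate}

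The semigroup property $K^{t+s}=K^tK^s$ follows from $S^{t+s}=S^s\circ S^t$. Uniform boundedness on $[0,1]$ follows from the Fa\`a di Bruno bounds on the derivatives of $S^t$.

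The main obstacle I expect is regularity bookkeeping in step (iii). Multiplication by $\Phi^t_{kj}$ must be bounded on $H^s$, and Hadamard's lemma loses one derivative, so this is delicate. I would handle it either by assuming $f\in C^{s+1}$, or by noting that for $s>d/2$, $H^s$ is a Banach algebra and using the fact that $\Phi^t_{kj}$ is in $C^s$ when $S^t\in C^{s+1}$. Failing both, I would simply defer to \Cref{prop:invariance.2} as stated, so the proposition follows as a corollary of \Cref{prop:invariance.2} combined with \Cref{lem:Sobolev-RKHS.1,lem:Sobolev-RKHS.2}.
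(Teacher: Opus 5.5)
This is the paper's route: \Cref{prop:invariance.4} is stated without a separate proof and is meant as the combination of \Cref{prop:invariance.2} with \Cref{lem:Sobolev-RKHS.1,lem:Sobolev-RKHS.2}, and your norm-equivalence step is what transfers strong continuity (by Aronszajn's sum-of-kernels theorem, the $\mc{H}_{\rg\kappa}$-norm is exactly $\inf\{(\sum_k\|g_k\|_{\mc{H}_\kappa}^2)^{1/2}: g=\sum_k e_kg_k\}$). In your optional direct sketch, the derivative loss goes away if you use the hypothesis $f\in\rg C^s$ structurally rather than only $f(0)=0$: write $f(x)=\Phi(x)x$ with $\Phi\in C^s$; then $S^t(x)=M^t(x)x$, where $\partial_\tau M^\tau(x)=\Phi(S^\tau(x))M^\tau(x)$ and $M^0(x)=I$, so $M^t$ is $C^s$ in $x$ and tends to $I$ in $C^s$ as $t\downarrow 0$, and you need neither Hadamard's lemma, star-shapedness, nor extra smoothness.
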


\par Now under the conditions of \Cref{prop:invariance.1} and \Cref{prop:invariance.3}, or \Cref{prop:invariance.2} and \Cref{prop:invariance.4}, we can always map (actually inject) the any state $x\in \mbb{X}$ into a corresponding element on the RKHS, namely its canonical feature $\kappa(x, \cdot)$ or $\rg{\kappa}(x, \cdot)$. 
\begin{itemize}
    \item Due to the fact that 
    \begin{equation*}\ip{K^tg}{\kappa(x, \cdot)} = g(S^t(x)) =\ip{g}{\kappa(S^tx, \cdot)},\end{equation*} 
    we have $K^{t*} \kappa(x, \cdot) = T^t \kappa(x, \cdot) = \kappa(S^t(x), \cdot)$. 
    This implies that the evolution of $z(t)=\kappa(x(t), \cdot)$ on the RKHS -- as an infinite-dimensional state space -- is governed by the infinitesimal generator of the strongly continuous Perron--Frobenius semigroup $\{T^t\}$, which we will write as $A: \mc{D}(A)\rightarrow Z= \mc{H}_\kappa$, where $\mc D(A)$ is the domain of $A$ and a dense subspace of $Z$. 
    \item For the measured outputs $y_k=h_k(x)$ ($k=1,\cdots,m$), if $h_k\in \mc{H}_\kappa$ (or $\mc{H}_{\rg\kappa}$, we write by reproducing property that $y_k = \ip{h_k}{\kappa(x, \cdot)}$. Stacking the components together, we have 
    \begin{equation*}y = \begin{bmatrix} \ip{h_1}{\kappa(x, \cdot)} \\ \vdots \\ \ip{h_m}{\kappa(x, \cdot)} \end{bmatrix} = C\kappa(x, \cdot),\end{equation*}
    where $C$ is a bounded linear operator from $Z$ to $\mR^m$. 
    \item Importantly, for state observation, we are interested in $x_1, \cdots, x_d$. If $e_1, \cdots, e_d$ belong to the RKHS $Z$, then we have $x_k = e_k(x)=\ip{e_k}{\kappa(x, \cdot)}$ and therefore 
    \begin{equation*}x =\begin{bmatrix} \ip{e_1}{\kappa(x, \cdot)} \\ \vdots \\ \ip{e_d}{\kappa(x, \cdot)} \end{bmatrix} = E\kappa(x, \cdot)\end{equation*}
    with $E \in \mc B(Z, \mR^d)$.
    \item In the above bullet points, $\kappa$ can be replaced by $\rg\kappa$. We in particular remark that if $\rg\kappa$ is used as the kernel to define the RKHS $Z$, then $z = \rg\kappa(x,\cdot)$ and $x$ are effectively isometric: 
    \begin{equation}\label{eq:isometric}
        \|z\|^2 = \rg\kappa(x,x) = \rho(0)|x|^2 \propto |x|^2 = |Ez|^2. 
    \end{equation}
    Such an isometric relation will be beneficial for the theoretical analysis in the next section. 
\end{itemize}
Now we have the following representation of \eqref{eq:sys}: 
\begin{equation}\label{eq:ACE}
    \dot{z} = Az, \enspace y=Cz, \enspace x=Ez.
\end{equation}
The unbounded operator $A$ generates a strongly continuous semigroup $\{T^t\}$ on $Z$, $C\in \mc{B}(Z, \mR^m)$ is the \emph{measurement operator}, and $E\in \mc{B}(Z, \mR^d)$ is the \emph{state evaluation operator}.

\subsection{Koopman Spectrum and Stability}\label{subsec:linear--radial}
It is worth slightly further remarking on the use of $\rg H^s$ instead of $H^s$. If considering $\{K^t\}$ and $\{T^t\}$ on $H^s$, then the equilibrium point at the origin results in $T^t\kappa(0, \cdot) = \kappa(0, \cdot)$ for all $t\in \mR_+$, where $\|\kappa(0, 0)\| = \rho(0)>0$. Hence for $\{T^t\}$, the \emph{spectrum of $A$}, its infinitesimal generator, cannot be confined completely on the open left half plane $\{\lambda\in\mC:\Re \lambda<0\}$, regardless whether the equilibrium point is attractive. 
Such a \emph{spectrum--stability relation} is unnatural and differs from the finite-dimensional case. 
The situation becomes different when using $\rg\kappa$. As $\rg\kappa(0, \cdot) = 0\kappa(0, \cdot) \equiv 0$, asymptotic convergence to the origin in $\mbb{X}\subseteq \mR^d$ comes along with the convergence to the origin in the Hilbert space $Z$. In the spirit of \cite{mezic2020spectrum}, by assuming the regularity of a homeomorphism, the following conclusion can be found \cite[Theorem 8]{tang-ye2025koopman}. 
\begin{proposition}\label{prop:stability-spectrum}
    Assume the conditions in \Cref{prop:invariance.4} and a homeomorphism $\psi\in \rg C^s(\mbb{X})^d$ such that for some $t\geq 0$, for all $x\in \mbb X$, $\psi\circ S^t\circ \psi^{-1}(x) = \mr{e}^{tF}x$, where $F = \mr{D}f(0)$ (the Jacobian matrix at the origin) is assumed to be Hurwitz. 
    Then $\sigma(A) \subset \{\lambda\in \mC: \operatorname{Re} \lambda \leq \max\{\operatorname{Re}\mu: \mu\in \sigma(F)\}\}$. 
\end{proposition}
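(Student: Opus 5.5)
Since $\psi$ conjugates the flow to the linear flow $x\mapsto \mr{e}^{tF}x$, the plan is to transfer the problem on $Z=\mc H_{\rg\kappa}=\rg H^s(\mbb X)$ to one for the linear flow. Let $\Psi g = g\circ\psi$ be the composition operator. The first step is to check that $\Psi$ and $\Psi^{-1}$ are bounded on $\rg H^s$. For this I would use the Fa\`a di Bruno argument from \Cref{prop:invariance.1}, together with $\psi,\psi^{-1}\in \rg C^s$ and $\psi(0)=0$. The point is that a function with linear factors composed with $\psi$ again has linear factors: write $\psi_j=\sum_k e_k\phi_{jk}$, so that $\sum_j e_j g_j\circ\psi=\sum_k e_k\sum_j \phi_{jk}\,(g_j\circ\psi)$. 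The same argument applies to $\psi^{-1}$. This requires assuming that $\psi^{-1}$ also lies in $\rg C^s$, i.e., that $\psi$ is a $C^s$-diffeomorphism fixing the origin, which is the natural reading of ``regularity of a homeomorphism.''

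The conjugacy then gives $K^t = \Psi\, L^t\, \Psi^{-1}$, where $L^tg = g\circ \mr{e}^{tF}$ is the Koopman operator of the linear flow, and the same relation holds for the adjoint semigroup $T^t$. Consequently $\sigma(T^t)=\sigma(L^{t*})$ and the growth bounds coincide. If the conjugacy holds only for one fixed $t>0$, I would argue through $T^t$ and the spectral inclusion $\mr{e}^{t\sigma(A)}\subseteq\sigma(T^t)$. This inclusion suffices because it only requires showing that the spectral radius satisfies $r(T^t)\le \mr{e}^{t\alpha}$ with $\alpha=\max \Re\sigma(F)$. It then yields $\Re\lambda\le\alpha$ for every $\lambda\in\sigma(A)$.

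The core step is to show $r(L^t)\le \mr{e}^{t\alpha}$ on $\rg H^s$, via $r(L^t)=\lim_n\|L^{nt}\|^{1/n}$. I would estimate $\|g\circ \mr{e}^{ntF}\|_{\rg H^s}$ for $g=\sum_k e_kg_k$ as follows. Write $g\circ \mr{e}^{\tau F}(x)=\sum_k (\mr{e}^{\tau F}x)_k\, g_k(\mr{e}^{\tau F}x)$. The linear factor supplies a factor $\|\mr{e}^{\tau F}\|\lesssim p(\tau)\mr{e}^{\tau\alpha}$, where $p$ is a polynomial. Since $F$ is Hurwitz, the derivatives of $g_k\circ \mr{e}^{\tau F}$ up to order $s$ pick up chain-rule factors $\|\mr{e}^{\tau F}\|^{r}$, which are bounded. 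The $L^2$-change of variables onto the shrinking image $\mr{e}^{\tau F}\mbb X\subseteq\mbb X$ has Jacobian $|\det \mr{e}^{-\tau F}|=\mr{e}^{-\tau\,\mr{tr}F}$, which grows exponentially.

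I expect this step to be the main obstacle. The naive Sobolev bound loses a factor $\mr{e}^{-\tau\,\mr{tr}F/2}$, which would spoil the estimate $\mr{e}^{\tau\alpha}$. To handle it, I would exploit the linear factor together with the fact that the functions concentrate near the origin. Concretely, I would decompose $\rg H^s$ by Taylor expansion at $0$: a finite-dimensional polynomial part of degree $<s$, on which $L^t$ acts as a matrix with eigenvalues that are products $\mr{e}^{t(\mu_{i_1}+\dots+\mu_{i_j})}$, $j\ge1$, all of modulus $\le \mr{e}^{t\alpha}$; and a remainder vanishing to high order at $0$. On the remainder, the flatness at the origin compensates for the Jacobian blow-up once $s>d/2$ is large relative to $\mr{tr}F/\alpha$. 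If this compensation fails, I would alternatively restrict to the invariant spectral estimate available via \cite[Theorem 8]{tang-ye2025koopman} and simply cite it. Combining the bound on both pieces with the similarity established above gives the claimed inclusion of $\sigma(A)$.
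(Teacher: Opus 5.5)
\textbf{Verdict: there is a genuine gap.} Your route is the paper's, and the paper does not settle the step either, but your proposal proves the proposition only under an extra hypothesis, with a citation as fallback.

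\textbf{The shared route.} The inclusion $\mr{e}^{t\sigma(A)}\subseteq\sigma(T^t)$ reduces the claim to $r(K^t)\le\mr{e}^{t\alpha}$, where $\alpha=\max\Re\sigma(F)$. The conjugacy moves the problem to the linear flow, and the linear factor is meant to supply $\|\mr{e}^{ntF}\|$. Requiring $\psi^{-1}\in\rg C^s$ is harmless; the paper uses it implicitly when it writes $\tilde g=g\circ\psi^{-1}\in\rg H^s(\psi(\mbb X))$. The paper then simply asserts that differentiating $\tilde g_k(\mr{e}^{ntF}y)$ produces only powers of $\|\mr{e}^{ntF}\|$, ignoring the change of variables. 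So your concern is legitimate.

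\textbf{Why the Taylor split does not help.} It is only defined up to order below $s-d/2$, since higher derivatives have no point values in $H^s$. It handles that polynomial part, but on the remainder flatness is beside the point. Flatness at $0$ controls pointwise only derivatives of order below $s-d/2$. The top-order derivatives are merely $L^2$, and after $u=\mr{e}^{\tau F}x$ they contribute $\|\mr{e}^{\tau F}\|^{s}\mr{e}^{-\tau\,\mr{tr}F/2}$ however flat the function is. What fights the Jacobian is the decay of the chain-rule factors, which you treated as merely bounded.

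\textbf{This is sharp.} Take $d=2$, $\mbb X$ the unit disc, and $F=\mr{diag}(-a,-b)$ with $0<a<b$. Let $f_\epsilon=\phi(\cdot/\epsilon)$ with $\phi$ supported in an annulus, so $f_\epsilon$ is flat to infinite order at $0$, and set $\epsilon=\mr{e}^{-b\tau}$. Computing $\partial_1^s(f_\epsilon\circ\mr{e}^{\tau F})$ gives $\|f_\epsilon\circ\mr{e}^{\tau F}\|_{H^s}\gtrsim\mr{e}^{\tau((a+b)/2-sa)}\|f_\epsilon\|_{H^s}$. Now consider any bound that writes $g\circ\mr{e}^{\tau F}=\sum_\ell e_\ell\sum_k(\mr{e}^{\tau F})_{k\ell}\,g_k\circ\mr{e}^{\tau F}$ and estimates each $g_k\circ\mr{e}^{\tau F}$ in $H^s$; this is the paper's displayed inequality and also your step. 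Such a bound gives at best $\mr{e}^{\tau(\alpha+s\alpha-\mr{tr}F/2)}$. That is worse than $\mr{e}^{\tau\alpha}$ whenever $\mr{tr}F<2s\alpha$, which here means $b>(2s-1)a$.

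\textbf{What your conditional clause amounts to.} ``Once $s$ is large relative to $\mr{tr}F/\alpha$'' is exactly the additional hypothesis $2s\alpha\le\mr{tr}F$. Under it the estimate closes directly, with no Taylor split. For $s-d/2\le j\le s$, bound order-$j$ derivatives via $H^{s-j}\hookrightarrow L^p$, H\"older, and the substitution. This gives the exponent $2j\alpha-\mr{tr}F\,(1-2(s-j)/d)$, which is linear in $j$ and nonpositive at both ends. Lower orders are handled by sup norms. Hence $f\mapsto f\circ\mr{e}^{\tau F}$ grows at most polynomially on $H^s$, and $\|K^{nt}\|\lesssim\mr{poly}(nt)\,\mr{e}^{nt\alpha}$. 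This covers, for example, $F$ a negative multiple of the identity, since $s>d/2$.

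\textbf{What would be needed beyond that regime.} You would need a genuinely new ingredient: use that the $\rg H^s$ norm is an infimum over representations $g=\sum_k e_kg_k$. Then re-represent $g\circ\mr{e}^{\tau F}$ so that division by the linear factor damps the mass stretched away from the origin. Neither your plan nor the paper's proof supplies this, and citing \cite{tang-ye2025koopman} in its place is not a proof.
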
 
\begin{proof}
    Due to the fact that $e^{t\sigma(A)} \subseteq \sigma(T^t)$ for all $t\in \mR_+$, which was proved in Theorem 2.3 of Pazy \cite{pazy1983semigroups}, it suffices to prove the assertion that 
    \begin{equation*}
        \sigma(T^t) \subset \BRA{\lambda\in \mC: |\lambda|\leq \max\{|\mr{e}^{\mu t}|: \mu\in \sigma(F)\} }. 
    \end{equation*}
    To this end, take any $g\in \rg H^s(\mbb{X})$, hence $\tilde{g} = g\circ\psi^{-1} \in \rg H^s(\psi(\mbb X))$, and examine $K^{nt} g(x) = \tilde g(\mr{e}^{ntF}y)$ as a function of $y\in \psi(\mbb{X})$ at large $n\in \mbb{N}$. 
    Expressing $\tilde{g}$ as $\tilde{g} = \sum_{k=1}^d e_k\tilde{g}_k$ with $\tilde{g}_1, \cdots, \tilde{g}_d\in H^s(\psi(\mbb{X}))$, we have \begin{equation*}\|K^{nt}g\|_{\rg H^s}^2 \lesssim \sum_{\ell=1}^d \sum_{|\alpha|\leq s} \int_{\psi(\mbb X)} \left\vert \sum_{k=1}^d(\mr{e}^{ntF})_{k\ell} \partial_y^\alpha \bra{ \tilde{g}_k(\mr{e}^{ntF}y)} \right\vert^2 \xD{y} .\end{equation*}
    As $n$ increases, the evaluation of $\partial_y^\alpha$ on $\tilde{g}_k(\mr{e}^{ntF}y)$, when taking the squared integral, results in the multiplication of operator norm of $\mr{e}^{ntF}$. Denote by $\sigma_1(F)$ the eigenvalue of $F$ with largest real part. Then we have the dependence on $n$ bounded by $\|K^{nt}g\|_{\rg H^s}\lesssim \mr{e}^{nt\cdot \operatorname{Re}\sigma_1(F)}$ and hence the spectral radius of $K^t$ bounded by $t\cdot \operatorname{Re}\sigma_1(F)$. That is, 
    \begin{equation*}
        T(K^t) = \BRA{ \lambda\in \mC: |\lambda|\leq \max\{|\mr{e}^{\mu t}|: \mu\in \sigma(F)\}}.
    \end{equation*} 
    Since $T^t$ is the adjoint of $K^t$, $\sigma(T^t) = \overline{T(K^t)}$, which implies the assertion to be proved. 
\end{proof} 

\section{Koopman Observability and Detectability}\label{sec:observability}
The salient difference between the Koopman representation of nonlinear system \eqref{eq:ACE} and a classical infinite-dimensional linear system is the involvement of a state evaluation operator $E: Z\rightarrow \mR^d$. Under \eqref{eq:ACE}, an observer can be formally stated as the following system.
\begin{definition}
    The (ideal) \emph{Koopman--Luenberger operator} for the system in \eqref{eq:ACE} refers to an observer of the form 
    \begin{equation}\label{eq:observer}
        \dot{\hat{z}}(t) = A\hat{z}(t) + L\bra{y(t)-C\hat{z}(t)}, \enspace \hat{x}(t)=E\hat{z}(t). 
    \end{equation}
    Here the bounded linear operator $L\in \mc{B}(\mR^m, Z)$ to be designed is called the \emph{observer gain}. 
    The \emph{RKHS error}, $\eta(t):=\hat{z}(t)-z(t)$, which satisfies $\dot{\eta}(t) = (A-LC)\eta(t)$ (precisely, in the sense of its mild solution: $\eta(t) = \check{T}^t\eta(0)$, where $\{\check{T}^t\}$ is the semigroup generated by $\check A:=A-LC$), determines the \emph{state observation error}, $e(t):=E\hat{z}(t)-x(t)=E\eta(t)$.\footnote{
    Since $\check{A}$ is the generator $A$ perturbed by a bounded operator, we know from semigroup theory \cite{pazy1983semigroups} that $\check{A}$ must generate a strongly continuous semigroup on $Z$ too. When this strongly continuous semigroup to act on any $\eta\in Z$, we formally write $\dot\eta(t) = \check{A}\eta(t)$ as an infinite-dimensional linear system with initial condition $\eta(0)=\eta$, and comprehend it in the sense of its mild solution: $\eta(t)=\check T^t\eta$. 
    }
\end{definition}

\subsection{Koopman Observability in Finite Time}\label{subsec:KO}
We recall from Curtain and Zwart \cite{curtain2020introduction} that the \emph{observability operator} 
\begin{equation*}\Psi^\tau: Z \rightarrow L^2([0, \tau], \mR^m), \enspace z\mapsto CT^{(\cdot)}z \end{equation*}
is the mapping from any $z\in Z$ at initial time to the measured output signals over a duration of $\tau$. 
The infinite-dimensional system $(A, C)$ is \emph{exactly observable in $\tau$} if $\Psi^\tau$ is injective and its inverse is a bounded linear operator on $\ran \Psi^\tau$. As a weaker concept, approximate observability of $(A,C)$ only requires $\ker\Psi^\tau = \{0\}$. The \emph{observability Gramian} is defined as $\Pi^\tau = \Psi^{\tau*}\Psi^\tau = \int_0^\tau T^{t*}C^*CT^t \xD{t}$. 
Now that we are interested in observing $x=Ez$ in $\mR^d$, instead of $z$ in an RKHS, the definitions shall be modified. 
\begin{definition}
    The system $(A, C, E)$ in \eqref{eq:ACE} is said to be \emph{Koopman exactly observable in $\tau$ ($\ms{KXO}_\tau$)} if (i) $\ker \Psi^\tau \subseteq \ker E$, and (ii) $E(\Psi^\tau)^{-1}$, which then becomes a well-defined linear operator from $\ran \Psi^\tau$ to $\mR^d$, is bounded. The system $(A, C, E)$ is \emph{Koopman approximately observable in $\tau$ ($\ms{KAO}_\tau$)} if condition (i) holds. 
\end{definition}

\par The definitions are interpreted as follows: $\ms{KAO}_\tau$ means that any two elements on RKHS that generate identical output signals over $[0,\tau]$ must be read as the same state on the $x$-space; $\ms{KXO}_\tau$ further requires that the difference in the $x$-space must be strictly distinguishable from the difference in the output signals over $[0, \tau]$. 
In the following theorem, sufficient and necessary conditions for Koopman exact and approximate observability are provided. The proof is effectively identical to Theorem 6.2.6 of \cite{curtain2020introduction} and omitted here. 
\begin{theorem}[Gramian conditions for $\ms{KXO}_\tau$ and $\ms{KAO}_\tau$]
\label{th:Gram.1}
    The system $(A, C, E)$ in \eqref{eq:ACE} is $\ms{KXO}_\tau$ if and only if there exists a constant $\gamma_\tau>0$ such that 
    \begin{equation*}\ip{\Pi^\tau z}{z}_Z \geq \gamma_\tau^2 \ip{Ez}{Ez}_{\mR^d}, \enspace \forall z\in Z. \end{equation*}
    For the system $(A, C, E)$ to be $\ms{KAO}_\tau$, it is necessary and sufficient that under any direct sum decomposition $Z = Z_0 \oplus Z_1$, $Z_0:=\ker E$,  
    $\Pi^\tau = \begin{bmatrix}
        \Pi_{00}^\tau & \Pi_{01}^\tau \\ \Pi_{10}^\tau & \Pi_{11}^\tau
    \end{bmatrix}$ 
    satisfies $\Pi_{11}^\tau \succ 0$ and $\Pi_{00}^\tau - \Pi_{01}^\tau (\Pi_{11}^\tau)^{-1} \Pi_{10}^\tau \succeq 0$. 
\end{theorem}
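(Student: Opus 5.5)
The plan is to reduce both statements to the identity $\ip{\Pi^\tau z}{z}_Z = \|\Psi^\tau z\|^2_{L^2([0,\tau],\mR^m)}$, which holds because $\Pi^\tau = \Psi^{\tau*}\Psi^\tau$. From it, $\ker\Pi^\tau = \ker\Psi^\tau$ (if $\Pi^\tau z=0$ then $\|\Psi^\tau z\|^2=0$, and the converse is immediate). This mirrors Theorem 6.2.6 of \cite{curtain2020introduction}, with $E$ now playing the role of the identity on the ``target'' side.

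\emph{Exact observability.} Suppose first that $(A,C,E)$ is $\ms{KXO}_\tau$.
\begin{itemize}
\item Let $M$ be the norm of the bounded map $E(\Psi^\tau)^{-1}$ on $\ran\Psi^\tau$. This map is well defined because $\ker\Psi^\tau\subseteq\ker E$.
\item For any $z\in Z$, apply it to $w=\Psi^\tau z$ to get $|Ez| \le M\|\Psi^\tau z\|$.
\item Squaring gives $\ip{\Pi^\tau z}{z}\ge \gamma_\tau^2|Ez|^2$ with $\gamma_\tau=1/M$. If $M=0$, i.e.\ $E=0$, any $\gamma_\tau>0$ works.
\end{itemize}
Conversely, assume the Gramian inequality.
\begin{itemize}
\item If $\Psi^\tau z=0$, then $\ip{\Pi^\tau z}{z}=0$, so $Ez=0$; this is condition (i).
\item $E(\Psi^\tau)^{-1}$ is therefore well defined on $\ran\Psi^\tau$. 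For $w=\Psi^\tau z$ we have $|E(\Psi^\tau)^{-1}w| = |Ez| \le \gamma_\tau^{-1}\|w\|$, which is boundedness, condition (ii).
\end{itemize}

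\emph{Approximate observability.} Here $\ms{KAO}_\tau$ is equivalent to $\ker\Pi^\tau\subseteq \ker E = Z_0$. Since $E$ has finite rank, $Z_0$ is closed of codimension at most $d$, so every algebraic complement $Z_1$ is finite-dimensional and automatically closed. The blocks $\Pi^\tau_{ij}$ are defined through the bounded form $(z_i,z_j')\mapsto\ip{\Pi^\tau z_j'}{z_i}$ restricted to $Z_i\times Z_j$. In particular, $\Pi_{11}^\tau\succ0$ means $\ip{\Pi^\tau z_1}{z_1}>0$ for all nonzero $z_1\in Z_1$; because $Z_1$ is finite-dimensional, this is the same as uniform positivity. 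The Schur complement condition is then automatic once $\Pi_{11}^\tau\succ0$. Indeed, $\Pi^\tau\succeq0$ yields the factorization
\begin{equation*}
\begin{bmatrix}\Pi_{00}^\tau&\Pi_{01}^\tau\\ \Pi_{10}^\tau&\Pi_{11}^\tau\end{bmatrix}
=\begin{bmatrix}I&\Pi_{01}^\tau(\Pi_{11}^\tau)^{-1}\\0&I\end{bmatrix}
\begin{bmatrix}\Pi_{00}^\tau-\Pi_{01}^\tau(\Pi_{11}^\tau)^{-1}\Pi_{10}^\tau&0\\0&\Pi_{11}^\tau\end{bmatrix}
\begin{bmatrix}I&0\\(\Pi_{11}^\tau)^{-1}\Pi_{10}^\tau&I\end{bmatrix},
\end{equation*}
and evaluating it at $z_0 - (\Pi_{11}^\tau)^{-1}\Pi_{10}^\tau z_0$ gives the Schur complement $\succeq0$. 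So the real content is the equivalence: $\ker\Pi^\tau\subseteq Z_0$ if and only if $\Pi^\tau_{11}\succ0$ for every complement $Z_1$.

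\emph{Forward direction.} Suppose $0\ne z_1\in Z_1$ has $\ip{\Pi^\tau z_1}{z_1}=0$. Then $\Psi^\tau z_1=0$, so $z_1\in\ker\Psi^\tau\subseteq Z_0$. This contradicts $Z_0\cap Z_1=\{0\}$.

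\emph{Converse.} Suppose there is $z\in\ker\Psi^\tau$ with $Ez\ne0$. Then $z\notin Z_0$, and I would extend $\mr{span}\{z\}$ to a complement $Z_1$ of $Z_0$; this is possible because $Z/Z_0$ is finite-dimensional and $z$ has nonzero class. In that decomposition $\ip{\Pi^\tau z}{z}=0$ with $z\in Z_1$, so $\Pi_{11}^\tau\not\succ0$.

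The main obstacle is interpretive rather than technical. One must fix what ``any direct sum decomposition'' and the blocks mean when $Z_0\oplus Z_1$ need not be orthogonal, and the converse genuinely needs the freedom to choose $Z_1\ni z$. I would state explicitly that the blocks are defined via the quadratic form, and note that with an orthogonal complement the statement reduces to the same argument.
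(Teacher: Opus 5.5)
Your proof is correct. For exact observability it is the adaptation of Theorem 6.2.6 of \cite{curtain2020introduction} that the paper invokes without writing out: everything rests on $\ip{\Pi^\tau z}{z}=\|\Psi^\tau z\|^2$, and hence on $\ker\Pi^\tau=\ker\Psi^\tau$.

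For the approximate part the paper gives no argument, and your treatment goes beyond what the paper itself appears to assume. First, you show that the Schur-complement clause follows automatically from $\Pi^\tau\succeq0$ and $\Pi^\tau_{11}\succ0$, so it carries no information; by contrast, \Cref{rem:homeomorphism-Schur} describes it as difficult to verify. Second, you show that sufficiency holds only when ``any'' is read as ``every'' complement $Z_1$, with the blocks defined by compressing the quadratic form.

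Keep that second point explicit, and tighten your closing sentence. For a single fixed complement, in particular the orthogonal one $Z_1=Z_0^\perp$ that the paper later uses in \Cref{rem:kerE}, the block conditions are necessary but not sufficient. For example, take $d=m=1$ and $f\equiv0$, so that $T^t=I$ and $\Pi^\tau=\tau\,h\times h$, and let $h=e_1+g$ with $0\neq g\perp e_1$. Then $\ip{\Pi^\tau e_1}{e_1}=\tau\|e_1\|^4>0$ on $Z_1=\mr{span}\{e_1\}=Z_0^\perp$, and the Schur complement is $\succeq0$ by your factorization. Yet $z=\|e_1\|^2g-\|g\|^2e_1$ satisfies $\Psi^\tau z=0$ and $Ez=-\|e_1\|^2\|g\|^2\neq0$, so the system is not $\ms{KAO}_\tau$.

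So your remark that with an orthogonal complement the statement reduces to the same argument is accurate only in one sense: there the quadratic-form blocks coincide with the usual orthogonal blocks. The converse still needs your freedom to choose a complement $Z_1$ containing the offending $z$.
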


In view of the goal to design an observer gain $L$ that achieves asymptotic observation, we should extend the observability concepts to infinite time. However $\Psi^\infty$ is not necessarily well defined, i.e., $\Psi^\infty \in \mc{B}(Z, L^2(\mR_+, \mR^m))$ is not guaranteed. 
Hence, we shall make use of the following fact, which states that $\ms{KXO}_\tau$ and $\ms{KXO}_\tau$ does not vary with a ``translation''. 
Specifically, when the semigroup $\{T^t\}_{t\in \mR_+}$ is strongly continuous, there must be an $\omega\in \mR$ and a $c>0$ such that $\|T^t\|\leq c\mr{e}^{\omega t}$ for all $t\in \mR_+$. Then, $\{\mr{e}^{-\lambda t}T^t\}_{t\in \mR_+}$, whose generator is obviously $A_{\leftarrow} = A-\lambda I$, is an exponentially contractive semigroup whenever $\lambda>\omega$. If $\{T^t\}_{t\in \mR_+}$ is already such that $\omega <0$ (i.e., expoentially stable), then it suffices that $A_{\leftarrow}=A$. 
\begin{proposition}\label{th:translation}
    System $(A, C, E)$ is $\ms{KXO}_\tau$ (or $\ms{KAO}_\tau$) if and only if $(A-\lambda I, C, E)$ is $\ms{KXO}_\tau$ (or $\ms{KAO}_\tau$, respectively) for any and hence for all $\lambda\in \mR$. 
\end{proposition}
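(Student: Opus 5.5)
The plan is to compare the observability operators of the two systems directly. The semigroup generated by $A-\lambda I$ is $\{\mr{e}^{-\lambda t}T^t\}_{t\in\mR_+}$; this is the standard bounded-perturbation fact for the scalar multiple $-\lambda I$, and it can be checked directly because $\mr{e}^{-\lambda t}T^t$ is strongly continuous and its generator on $\mc D(A)$ is $A-\lambda I$. Hence the translated observability operator is
\begin{equation*}
    \Psi^\tau_\lambda z = \mr{e}^{-\lambda(\cdot)}CT^{(\cdot)}z = M_\lambda \Psi^\tau z,
\end{equation*}
where $M_\lambda: L^2([0,\tau],\mR^m)\to L^2([0,\tau],\mR^m)$ is pointwise multiplication by $\mr{e}^{-\lambda t}$.

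The key observation is that on the finite interval $[0,\tau]$ the multiplier $M_\lambda$ is a bounded bijection with bounded inverse $M_{-\lambda}$. The bounds are
\begin{equation*}
    \min\{1,\mr{e}^{-\lambda\tau}\}\,\|v\| \le \|M_\lambda v\| \le \max\{1,\mr{e}^{-\lambda\tau}\}\,\|v\|.
\end{equation*}
The two conditions then transfer as follows.
\begin{itemize}
    \item \emph{Condition (i).} Since $M_\lambda$ is injective, $\ker\Psi^\tau_\lambda=\ker\Psi^\tau$. So the inclusion $\ker\Psi^\tau\subseteq\ker E$ holds for one system if and only if it holds for the other, which settles the $\ms{KAO}_\tau$ equivalence.
    \item \emph{Condition (ii).} We have $\ran\Psi^\tau_\lambda=M_\lambda\ran\Psi^\tau$ and $E(\Psi^\tau_\lambda)^{-1}=E(\Psi^\tau)^{-1}M_\lambda^{-1}$ on that range. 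This composite is bounded exactly when $E(\Psi^\tau)^{-1}$ is, with operator norms differing by at most the factor $\max\{1,\mr{e}^{\lambda\tau}\}$ or its reciprocal.
\end{itemize}

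Alternatively, one can use \Cref{th:Gram.1}. The translated Gramian is $\Pi^\tau_\lambda=\int_0^\tau \mr{e}^{-2\lambda t}T^{t*}C^*CT^t\,\xD{t}$, which satisfies
\begin{equation*}
    \min\{1,\mr{e}^{-2\lambda\tau}\}\,\Pi^\tau\preceq\Pi^\tau_\lambda\preceq\max\{1,\mr{e}^{-2\lambda\tau}\}\,\Pi^\tau.
\end{equation*}
Thus the inequality $\ip{\Pi^\tau z}{z}\ge\gamma_\tau^2|Ez|^2$ transfers, with $\gamma_\tau$ rescaled.

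"For any and hence for all" follows because the relation is symmetric: $A=(A-\lambda I)+\lambda I$. Equivalence with the original system for one $\lambda$ therefore gives equivalence for every $\lambda$.

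The only point needing care is identifying the translated semigroup and noting that $\tau<\infty$ keeps $\mr{e}^{\pm\lambda t}$ bounded. Neither step is a real obstacle, since $\lambda I$ is bounded and commutes with $T^t$.
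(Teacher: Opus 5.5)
Your proof is correct. Your ``alternative'' paragraph is essentially the paper's proof: the paper bounds the translated Gramian $\int_0^\tau \mr{e}^{-2\lambda t}T^{t*}C^*CT^t\,\xD{t}$ between $\min\{1,\mr{e}^{-2\lambda\tau}\}\Pi^\tau$ and $\max\{1,\mr{e}^{-2\lambda\tau}\}\Pi^\tau$. It then invokes \Cref{th:Gram.1} for both $\ms{KXO}_\tau$ and $\ms{KAO}_\tau$.

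Your main argument is a genuinely different route. It works one level lower, through the factorization $\Psi^\tau_\lambda=M_\lambda\Psi^\tau$, where $M_\lambda$ is an isomorphism of $L^2([0,\tau],\mR^m)$, and it checks the two defining conditions directly.

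This makes the result independent of \Cref{th:Gram.1}, whose proof the paper omits. That independence matters for the $\ms{KAO}_\tau$ half. As stated, the block criterion in \Cref{th:Gram.1} is only necessary, not sufficient. When $\Pi^\tau\succeq 0$ and $\Pi^\tau_{11}\succ 0$, the Schur complement is automatically nonnegative. So the criterion reduces to $\ker\Psi^\tau\cap Z_1=\{0\}$, which is weaker than $\ker\Psi^\tau\subseteq\ker E$: it does not exclude a kernel vector $z_0+z_1$ with $z_0\in Z_0$ and $z_1\in Z_1\setminus\{0\}$. Your identity $\ker\Psi^\tau_\lambda=\ker\Psi^\tau$ avoids this issue entirely. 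The Gramian route can be repaired the same way, by using $\ker\Pi^\tau=\ker\Psi^\tau$, which the two-sided bound preserves.

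In exchange, the Gramian route is a one-liner for $\ms{KXO}_\tau$ and gives the rescaled constant $\gamma_\tau^2\min\{1,\mr{e}^{-2\lambda\tau}\}$ explicitly.

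One cosmetic correction to your norm comparison. The sharp bounds are $\min\{1,\mr{e}^{\lambda\tau}\}\,\|E(\Psi^\tau)^{-1}\|\le\|E(\Psi^\tau_\lambda)^{-1}\|\le\max\{1,\mr{e}^{\lambda\tau}\}\,\|E(\Psi^\tau)^{-1}\|$. So for $\lambda<0$, your stated factor $\max\{1,\mr{e}^{\lambda\tau}\}=1$ understates the discrepancy. This has no bearing on the equivalence.
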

\begin{proof}
    Let the observability Gramian of system $(A-\lambda I, C, E)$ be $\Pi^\tau_\leftarrow$. Then $\Pi^\tau_\leftarrow = \int_0^\tau e^{-2\lambda t} T^{t*}C^*CT^t \xD{t} \succeq \min\{1, e^{-2\lambda\tau}\}\Pi^\tau$ and similarly $\Pi^\tau_\leftarrow \preceq \max\{1, e^{-2\lambda\tau}\}\Pi^\tau$. By the above theorem, the conclusion holds. 
\end{proof}

\subsection{Koopman Observability in Infinite Time}\label{subsec:KOS}
Now we have a translated observability operator $\Pi_{\leftarrow}^\infty$, which is well-defined and is in $\mc B(Z, L^2(\mR_+, \mR^m))$. 
Koopman approximate observability in infinite time can then be defined, via a virtual translation of the semigroup $\{T^t\}$ that enforces exponential stability. 
\begin{definition}
    Let $\omega_0=\inf\{\omega: \exists c>0 \textrm{ such that } \forall t\in\mR_+, \|T^t\|\leq c\mr{e}^{\omega t}\}$. 
    The system $(A, C, E)$ in \eqref{eq:ACE} is said to be \emph{Koopman exactly observable in infinite time ($\ms{KXO}_\infty$)} if (i) for some (and hence for all) $\lambda>\omega_0$, $\ker\,\Psi^\infty_\leftarrow \subseteq \ker\,E$, and (ii) $E(\Psi_\leftarrow^\infty)^{-1}$, now well-defined from $\ran\,\Psi^\infty_\leftarrow$ to $\mR^d$, is bounded. 
    The system $(A, C, E)$ is \emph{Koopman approximately observable in infinite time ($\ms{KAO}_\infty$)} if condition (i) holds. 
\end{definition}
Condition (i) is equivalent to the statement that the \emph{unobservable subspace} $\{z\in Z: CT^tz = 0,\,\forall t\in\mR_+\} =  \cap_{\tau>0} \ker \Psi^\tau_\leftarrow = \cap_{\tau>0} \ker \Psi^\tau$ is contained in $\ker E$, since $\ker\Psi^\tau$ does not vary with translation from $A$ to $A_\leftarrow$. 
We also remark here that the $\ms{KXO}_\infty$ property is not dependent on the choice of $\lambda$ in the definition, as long as $\lambda$ is sufficiently large. This is because, if under some $\lambda \geq \omega_0$ we have $\Pi^\infty_\leftarrow \succeq \gamma I$, then due to the fact that $\Pi^\infty_\leftarrow$ is the limit in the operator norm topology of $\Pi^\tau_\leftarrow$ (as $\tau\to\infty$), there must be a large $\tau$ such that $\Pi^\tau_\leftarrow \succeq (\gamma-\epsilon)I$ for any small $\epsilon>0$. 
In addition, if the $\ms{KAO}_\infty$ property holds for some $\lambda$ value in the definition, then $\lambda$ can be replaced with any $\lambda'\in (\omega_0, \lambda]$, but not necessarily with $\lambda'>\lambda$. 

In the special case that $\Psi^\infty: z\mapsto CT^{(\cdot)}z$ is indeed a bounded linear operator from $Z$ to $L^2(\mR_+, \mR^m)$, i.e., when the system $(A,C)$ is \emph{output stable}, the observability conditions are easier to verify based on the Gramians. 
To this end, we first define Koopman output stability, which requires the output signal is square-integrable and such an integral is bounded according to the initial state on the $x$-space.  

\begin{definition}
    The system $(A, C, E)$ in \eqref{eq:ACE} is said to be \emph{Koopman output stable ($\ms{KOS}$)} if $\|\Psi^\infty z\|^2\leq \beta^2 |Ez|^2$ holds for all $z\in Z$ for some constant $\beta\geq 0$.\footnote{
    At the first glance, it appears that {\sf KOS} is a stronger requirement than output stability, as $|Ez|^2 \lesssim \|z\|^2$ with $E$ being a bounded linear operator. When using the linear--radial kernel $\rg\kappa$, as revealed in \eqref{eq:isometric}, $z=\rg\kappa(x, \cdot)$ makes $\|z\| \propto |x| = |Ez|$; thus the $\ms{KOS}$ property of $(A,C,E)$ and the output stability of $(A,C)$ are actually equivalent.} 
    In this case, $\Pi^\infty = (\Psi^\infty)^* \Psi^\infty \preceq \beta^2 E^\ast E$.  
\end{definition}

As output stability is connected to an operator Lyapunov equation \cite[Theorem 6.5.2]{curtain2020introduction}, we naturally have the following condition for $\ms{KOS}$. 
\begin{proposition}[Lyapunov equation for $\ms{KOS}$]
\label{th:ALE-KOS}
    $(A, C, E)$ is $\ms{KOS}$ if and only if there exists a nonnegative operator solution $\Pi$, which satisfies $\Pi \preceq \beta^2 E^\ast E$ for some $\beta\geq 0$, to the algebraic Lyapunov equation:
    \begin{equation}
    \ip{\Pi z}{Az} + \ip{Az}{\Pi z} = -\ip{Cz}{Cz}, \enspace \forall z \in \mc{D}(A).
    \end{equation}
    In this case, the minimal bounded nonnegative solution is $\Pi^\infty$, and $(\Pi^\infty)^{1/2} T^t z \rightarrow 0$ as $t\rightarrow \infty$ for all $z\in Z$. 
\end{proposition}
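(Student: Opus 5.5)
The plan is to reduce the statement to the classical output-stability result \cite[Theorem 6.5.2]{curtain2020introduction}. The extra weighting by $E^*E$ is then carried along by the definition of $\ms{KOS}$. The proof has two directions.

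\emph{Necessity.} Assume $(A,C,E)$ is $\ms{KOS}$. Then $\Psi^\infty\in\mc B(Z,L^2(\mR_+,\mR^m))$, because $\|\Psi^\infty z\|^2\le\beta^2|Ez|^2\le\beta^2\|E\|^2\|z\|^2$. Hence $(A,C)$ is output stable in the classical sense, and $\Pi^\infty=\Psi^{\infty*}\Psi^\infty=\int_0^\infty T^{t*}C^*CT^t\,\xD t$ is a bounded nonnegative operator, defined as a strong integral. For $z\in\mc D(A)$, the map $t\mapsto \ip{\Pi^\infty T^tz}{T^tz}=\int_t^\infty|CT^sz|^2\xD s$ is differentiable, with derivative $-|CT^tz|^2$. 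Its derivative at $t=0$ is also $\ip{\Pi^\infty z}{Az}+\ip{Az}{\Pi^\infty z}$, since $T^tz$ is differentiable in $Z$ with derivative $AT^tz$. Equating the two gives the Lyapunov equation. The bound $\Pi^\infty\preceq\beta^2E^*E$ is exactly the $\ms{KOS}$ inequality rewritten in quadratic form. The same identity gives $\|(\Pi^\infty)^{1/2}T^tz\|^2=\int_t^\infty|CT^sz|^2\xD s\to0$ as $t\to\infty$, as the tail of a convergent integral.

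\emph{Sufficiency and minimality.} Let $\Pi\succeq0$ be bounded, satisfy the Lyapunov equation, and satisfy $\Pi\preceq\beta^2E^*E$. For $z\in\mc D(A)$ we have $T^sz\in\mc D(A)$, so $\frac{\xD{}}{\xD s}\ip{\Pi T^sz}{T^sz}=-|CT^sz|^2$. Integrating over $[0,t]$ gives
\begin{equation*}
\int_0^t|CT^sz|^2\xD s=\ip{\Pi z}{z}-\ip{\Pi T^tz}{T^tz}\le\ip{\Pi z}{z}.
\end{equation*}
Both sides are continuous in $z$, since $\Pi$, $C$ and $T^s$ are bounded and $\|T^s\|$ is uniformly bounded on $[0,t]$. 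By density of $\mc D(A)$, the inequality therefore extends to all $z\in Z$. Letting $t\to\infty$ yields $\ip{\Pi^\infty z}{z}\le\ip{\Pi z}{z}\le\beta^2|Ez|^2$. This shows both that $(A,C,E)$ is $\ms{KOS}$ and that $\Pi^\infty\preceq\Pi$ for every such solution, so $\Pi^\infty$ is the minimal one.

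The main obstacle is the differentiation step on $\mc D(A)$ with an unbounded $A$. It needs care that $\ip{\Pi T^sz}{T^sz}$ is $C^1$, which follows from the product rule because $\Pi$ is bounded and $s\mapsto T^sz$ is $C^1$ in norm. The Lyapunov equation is only asserted on $\mc D(A)$ as a sesquilinear identity, so the density extension must be done on the integrated form rather than on the equation itself. That form is continuous in $z$ and is what makes the argument work.
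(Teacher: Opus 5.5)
Your proof is correct and takes the same route as the paper, which gives no separate argument and simply defers to \cite[Theorem 6.5.2]{curtain2020introduction}. You have written out that classical output-stability/Lyapunov argument in full, and the added condition $\Pi \preceq \beta^2 E^*E$ is, as you note, just the $\ms{KOS}$ inequality in quadratic form, passed from any solution $\Pi$ to $\Pi^\infty$ through the minimality inequality $\Pi^\infty \preceq \Pi$.
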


\begin{theorem}[Gramian conditions for $\ms{KXO}_\infty$ and $\ms{KAO}_\infty$]
\label{th:Gram.2}
    Suppose that the system $(A, C, E)$ in \eqref{eq:ACE} is $\ms{KOS}$. 
    The system is $\ms{KXO}_\infty$ if and only if there exists a constant $\gamma_\infty>0$ such that 
    \begin{equation}
    \ip{\Pi^\infty z}{z}_Z \geq \gamma_\infty^2 \ip{Ez}{Ez}_{\mR^d}, \enspace \forall z\in Z. \end{equation}
    For $(A, C, E)$ to be $\ms{KAO}_\infty$, it is necessary and sufficient that under a direct sum decomposition $Z = Z_0\oplus Z_1$, $Z_0:=\ker E$,
    $\Pi^\infty = \begin{bmatrix}
        \Pi_{00}^\infty & \Pi_{01}^\infty \\ \Pi_{10}^\infty & \Pi_{11}^\infty
    \end{bmatrix}$ 
    satisfies $\Pi_{11}^\infty \succ 0$ and $\Pi_{00}^\infty -\Pi_{01}^\infty (\Pi_{11}^\infty)^{-1} \Pi_{10}^\infty \succeq 0$. 
\end{theorem}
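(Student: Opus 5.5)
The plan is to transfer the finite-horizon argument of \Cref{th:Gram.1} to infinite time. Under $\ms{KOS}$, $\Psi^\infty\in\mc B(Z,L^2(\mR_+,\mR^m))$ is well defined and $\Pi^\infty=(\Psi^\infty)^*\Psi^\infty$ is a bounded nonnegative operator. So we may work with $\Psi^\infty$ itself, rather than the translated $\Psi^\infty_\leftarrow$. The first step reconciles the two. Since $\ker\Psi^\infty_\leftarrow=\cap_\tau\ker\Psi^\tau=\ker\Psi^\infty$, condition (i) reads the same for both. For condition (ii), we use the pointwise bound $\|\Psi^\infty_\leftarrow z\|\le\|\Psi^\infty z\|$ for $\lambda\ge 0$. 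We also use the converse estimate $\|\Psi^\infty z\|^2\le \beta^2|Ez|^2$ together with the split at a finite $\tau$ as in the proof of \Cref{th:translation}. From these we would show that boundedness of $E(\Psi^\infty_\leftarrow)^{-1}$ is equivalent to boundedness of $E(\Psi^\infty)^{-1}$. If this direct comparison is awkward, we would instead use the remark after the definition: $\Pi^\infty_\leftarrow$ is the operator-norm limit of $\Pi^\tau_\leftarrow$, so exact observability in infinite time reduces to $\ms{KXO}_\tau$ for large $\tau$. Then \Cref{th:Gram.1} and $\Pi^\tau\preceq\Pi^\infty$ give one direction directly.

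For the $\ms{KXO}_\infty$ equivalence, the argument runs through the identity $\ip{\Pi^\infty z}{z}=\|\Psi^\infty z\|^2_{L^2}$. Suppose first that $\|\Psi^\infty z\|^2\ge\gamma_\infty^2|Ez|^2$. Then $\Psi^\infty z=0$ forces $Ez=0$, which is (i). Moreover, for $w=\Psi^\infty z\in\ran\Psi^\infty$ we get $|E(\Psi^\infty)^{-1}w|=|Ez|\le\gamma_\infty^{-1}\|w\|$, so the map is well defined, since (i) makes it independent of the choice of preimage, and it is bounded. Conversely, if (i) and (ii) hold with bound $M$, then $|Ez|\le M\|\Psi^\infty z\|$ for all $z$, which is the inequality with $\gamma_\infty=1/M$.

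For the $\ms{KAO}_\infty$ part, we decompose $Z=Z_0\oplus Z_1$ with $Z_0=\ker E$. Here $Z_1$ is finite-dimensional, because $E$ has rank at most $d$, so the inverse $(\Pi^\infty_{11})^{-1}$ is unproblematic once $\Pi^\infty_{11}\succ0$. We then block-diagonalize $\Pi^\infty$ through the Schur complement. Approximate observability means $\ker\Pi^\infty=\ker\Psi^\infty\subseteq Z_0$, that is, no nonzero $z$ with a $Z_1$-component is annihilated by $\Pi^\infty$. We would argue this by minimizing $\ip{\Pi^\infty z}{z}$ over the $Z_0$-component for a fixed $Z_1$-component, which is exactly where the Schur complement appears. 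This is copied from the finite-horizon case of \Cref{th:Gram.1}, with $\Pi^\tau$ replaced by $\Pi^\infty$.

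The main obstacle I expect is this Schur-complement characterization. The statement as written lists $\Pi_{11}\succ0$ and a Schur complement on $Z_0$ that is only $\succeq0$, and in infinite dimensions the minimization over $Z_0$ need not be attained, so injectivity on $Z_1$ modulo $Z_0$ is delicate. I would first try to prove it in the form actually matched by the kernel condition. This means showing that $\ker\Pi^\infty\subseteq Z_0$ is equivalent to positivity of the reduced quadratic form $\inf_{z_0\in Z_0}\ip{\Pi^\infty(z_0+z_1)}{z_0+z_1}>0$ for all $z_1\ne0$, handling non-attainment via a closed-range argument restricted to the finite-dimensional $Z_1$. I would then invoke the claim in \Cref{th:Gram.1}, accepting its formulation, with its block ordering, as given for the $\tau$ case.
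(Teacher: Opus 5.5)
The paper states this theorem without proof (for \Cref{th:Gram.1} it defers to Curtain--Zwart, Theorem 6.2.6). Your $\ms{KXO}_\infty$ half, via $\ip{\Pi^\infty z}{z}=\|\Psi^\infty z\|^2$, is exactly that standard argument and is fine. The $\ms{KAO}_\infty$ half, however, is never actually proved. You reduce the kernel condition to positivity of the reduced form $\inf_{z_0\in Z_0}\ip{\Pi^\infty(z_0+z_1)}{z_0+z_1}$. That is a Schur complement taken with respect to the $Z_0$-block, the opposite ordering to the statement. You then bridge to ``$\Pi^\infty_{11}\succ0$ and $\Pi^\infty_{00}-\Pi^\infty_{01}(\Pi^\infty_{11})^{-1}\Pi^\infty_{10}\succeq0$'' by citing \Cref{th:Gram.1} as given. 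That bridge does not exist in general. For any nonnegative $\Pi$ the second inequality is automatic once $\Pi_{11}\succ0$, and the pair does not imply $\ker\Pi\subseteq Z_0$. In $\mR^2$ take $Z_0=\mr{span}\{(1,0)\}$, $Z_1=\mr{span}\{(0,1)\}$ and $\Pi=vv^\top$ with $v=(1,-1)^\top$. Then $\Pi_{11}=1$ and the Schur complement is $0$, yet $(1,1)\in\ker\Pi\setminus Z_0$. Your intermediate equivalence is also false without extra structure. Positivity of the reduced form means $\Psi^\infty z_1\notin\overline{\Psi^\infty(Z_0)}$, whereas the kernel condition only gives $\Psi^\infty z_1\notin\Psi^\infty(Z_0)$. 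Finite-dimensionality of $Z_1$ does nothing for closedness of $\Psi^\infty(Z_0)$, because $Z_0$ is infinite-dimensional.

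The missing idea is that the $\ms{KOS}$ hypothesis makes all of this collapse. From $\Pi^\infty\preceq\beta^2E^*E$ you get $\|(\Pi^\infty)^{1/2}z_0\|=0$, i.e.\ $\Pi^\infty z_0=0$, for every $z_0\in Z_0=\ker E$. By self-adjointness, $\ip{\Pi^\infty(z_0+z_1)}{z_0+z_1}=\ip{\Pi^\infty z_1}{z_1}$ for any complement $Z_1$. So every block touching $Z_0$ vanishes, the Schur-complement condition reads $0\succeq0$, and $\Pi^\infty z=0$ if and only if $\ip{\Pi^\infty z_1}{z_1}=0$. Hence $\ms{KAO}_\infty$ holds if and only if $\Pi^\infty_{11}\succ0$. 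Since $\dim Z_1\le d$ and $E$ is injective on $Z_1$, this upgrades to $\ip{\Pi^\infty z}{z}\ge\gamma_\infty^2|Ez|^2$, so under $\ms{KOS}$ the approximate and exact notions coincide.

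The same observation is what your reconciliation of $\Psi^\infty$ with $\Psi^\infty_\leftarrow$ needs. $\Psi^\infty$, $\Psi^\infty_\leftarrow$ and every $\Psi^\tau$ vanish on $\ker E$ and factor through the $d$-dimensional $\ran E$. There injectivity is automatically bounded below, and $\ker\Psi^\infty_\leftarrow=\ker\Psi^\infty$. Without this, your split at $\tau$ only gives $\ip{\Pi^\infty z}{z}\le\ip{\Pi^\tau z}{z}+\beta^2|ET^\tau z|^2$, and $\ms{KOS}$ provides no decay of $|ET^\tau z|$ relative to $|Ez|$. Likewise, your fallback turns $\Pi^\infty_\leftarrow\succeq\gamma^2E^*E$ into $\Pi^\tau_\leftarrow\succeq\gamma^2E^*E-\epsilon I$. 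That yields no bound of the form $\gamma'^2E^*E$ unless $\Pi^\tau_\leftarrow$ is already known to annihilate $\ker E$.
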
 

\begin{remark}\label{rem:kerE}
    We characterize the direct sum decomposition used in \Cref{th:Gram.1} and \Cref{th:Gram.2}. Since $Z = \overline{\mr{span}}\{\rg\kappa(x,\cdot): x\in \mbb{X}\}$ and $E\rg\kappa(x,\cdot)=x$, we see that the null space of $E$ is the collection of such linear combinations of kernel functions that the corresponding linear combination of the points becomes zero:
    \begin{equation*}\ker E = \overline{ \BRA{\sum_{i=1}^n \alpha_i\rg\kappa(x_i, \cdot): \sum_{i=1}^n \alpha_ix_i = 0, \{x_i\}_{i=1}^n \subset \mbb{X}, n\in \mbb{N} } }. \end{equation*}
    We aim to find $Z_1$ that complements $Z_0$. It can easily verified that (i) the component functions $\{e_k\}_{k=1}^d$ are nonzero and linearly independent, and (ii) each $e_k$ is orthogonal to any element in $\ker E$; indeed, $\sum_{i=1}^n \alpha_ix_i = 0$ implies $\ip{e_k}{\sum_{i=1}^n \alpha_i\rg\kappa(x_i, \cdot)} = \sum_{i=1}^n \alpha_i(x_i)_k = 0$. 
    Therefore, by letting 
    \begin{equation}\label{eq:Z1}
        Z_1 = \mr{span}\{e_k\}_{k=1}^d,
    \end{equation} 
    we have $Z_1 \perp Z_0$ and $Z_0\oplus Z_1= Z$.  
    In this way, the condition $\Pi_{11}^\infty \succ 0$ in \Cref{th:Gram.2} is equivalent to the positive definiteness condition: 
    \begin{equation*}
        \Bra{ \alpha_j\alpha_k \ip{e_j}{\Pi_{11}^\infty e_k}}_{j,k=1}^d \succ 0.
    \end{equation*} 
\end{remark}
\begin{remark}\label{rem:homeomorphism-Schur}
    The verification of condition $\Pi_{00}^\infty - \Pi_{01}^\infty (\Pi_{11}^\infty)^{-1} \Pi_{10}^\infty \succeq 0$ is difficult. But it holds automatically if for $Z_0 = \ker E$ and some choice of $Z_1$ that complements $Z_0$, both $Z_0$ and $Z_1$ are invariant under the semigroup $\{T^t\}_{t\in \mR_+}$. 
    A sufficient condition for the invariance of $Z_0$ and $Z_1$ is that the dynamics is linear and diagonal, i.e., $(S^tx)_k = \mr{e}^{\lambda_kt}x_k$. 
    Indeed, in this case, $\sum_{i=1}^n \alpha_ix_i=0$ implies that $\sum_{i=1}^n \alpha_iS^t(x_i)=0$ for all $t\in \mR_+$. 
    Thus, applying the homeomorphism argument in \Cref{prop:stability-spectrum}, when there is a $\rg C^s$-homeomorphism $\psi$ transforming the dynamics $f$ to its diagonalized Jacobian dynamics, we can choose $Z_1 = \mr{span}\{\tilde e_k\}_{k=1}^d$ where $\tilde e_k = \psi^{-1}(e_k)$. 
    Therefore, $\ms{KAO}_\infty$ is equivalent to the positive definiteness condition: 
    \begin{equation*}
        \Bra{ \alpha_j\alpha_k \ip{\tilde e_j}{\Pi_{11}^\infty \tilde e_k}}_{j,k=1}^d \succ 0 .
    \end{equation*}
\end{remark}

\subsection{Koopman Exponential and Strong Detectability} \label{subsec:KD}
For the problem of designing an Koopman observer gain $L$ in \eqref{eq:observer}, it becomes natural that we are only concerned with the $x$ readout from $z$, and hence detectability is defined as the property that a zero output signal implies convergence to $0$ on the state space. 
\begin{definition}
    The system \eqref{eq:ACE} is \emph{Koopman detectable ($\ms{KD}$)} if $CT^tz=0$ ($\forall t\in \mR_+$) implies $\lim_{t\rightarrow \infty} ET^tz=0$. 
\end{definition}
\begin{proposition}[$\ms{KAO}_\infty \Rightarrow \ms{KD}$]
    If $(A, C, E)$ in \eqref{eq:ACE} is $\ms{KAO}_\infty$, and if $\ker E$ is invariant under the semigroup $\{T^t\}_{t\in \mR_+}$ (i.e., for any $z\in Z$, $Ez=0$ implies $ET^tz=0$ for all $t\in \mR_+$), then $(A,C,E)$ is $\ms{KD}$. 
\end{proposition}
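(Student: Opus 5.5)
The argument is short and uses only the characterization of condition (i) in the definition of $\ms{KAO}_\infty$: the unobservable subspace lies in $\ker E$. The idea is to show that any $z$ with $CT^tz=0$ for all $t\in\mR_+$ produces a trajectory $T^tz$ that stays in $\ker E$. Then $ET^tz\equiv 0$, and the limit in the definition of $\ms{KD}$ holds trivially.

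\emph{Step 1.} Fix $z\in Z$ with $CT^tz=0$ for all $t\in\mR_+$, so $z$ lies in the unobservable subspace
\begin{equation*}
\mc{N}:=\{z\in Z: CT^tz=0,\,\forall t\in\mR_+\}=\cap_{\tau>0}\ker\Psi^\tau .
\end{equation*}
As noted after the definition of $\ms{KXO}_\infty$/$\ms{KAO}_\infty$, this set coincides with $\cap_{\tau>0}\ker\Psi^\tau_\leftarrow$. The reason is that multiplying the output by $\mr{e}^{-\lambda t}$ does not change where it vanishes, and $\ker\Psi^\infty_\leftarrow$ is the set of $z$ whose translated output vanishes on all of $\mR_+$. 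Hence $\mc N=\ker\Psi^\infty_\leftarrow$, and the $\ms{KAO}_\infty$ hypothesis gives $\mc N\subseteq\ker E$. In particular $Ez=0$.

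\emph{Step 2.} The set $\mc N$ is $T^s$-invariant for every $s\ge0$. Indeed, if $z\in\mc N$, then by the semigroup property $CT^t(T^sz)=CT^{t+s}z=0$ for all $t\ge0$. Combining this with Step 1 gives $ET^sz=0$ for every $s$, and therefore $\lim_{t\to\infty}ET^tz=0$, which is $\ms{KD}$.

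\emph{Step 3.} This leaves the role of the extra hypothesis that $\ker E$ is $\{T^t\}$-invariant. Step 2 already yields the result through the invariance of $\mc N$, so this hypothesis can serve as an alternative route: from $Ez=0$ (Step 1), invariance of $\ker E$ gives $ET^tz=0$ for all $t$ directly. I would present this second route, since it matches the proposition's hypotheses, and mention the semigroup-invariance argument as a remark.

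The only point needing care is the identification of the unobservable subspace with $\ker\Psi^\infty_\leftarrow$. This requires continuity of $t\mapsto CT^tz$, which follows from strong continuity of $\{T^t\}$ and boundedness of $C$, so that an $L^2$-zero output is identically zero. I expect no real obstacle beyond that.
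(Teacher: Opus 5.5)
Your proposal is correct. Your Step 3 is exactly the paper's proof: $\ms{KAO}_\infty$ places the unobservable subspace $\mc N$ inside $\ker E$, and invariance of $\ker E$ then carries $Ez=0$ to $ET^tz=0$ for all $t$.

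Your Step 2, however, is a genuinely different argument, and you should lead with it rather than demote it to a remark. Because $CT^t(T^sz)=CT^{t+s}z$, the set $\mc N$ is always $\{T^s\}$-invariant, so $T^sz\in\mc N\subseteq\ker E$. The hypothesis on $\ker E$ is never used, and you obtain $\ms{KAO}_\infty\Rightarrow\ms{KD}$ with no extra assumption.

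This gain is not cosmetic. With the linear--radial kernel, $ET^t\bigl(\sum_i\alpha_i\rg\kappa_{x_i}\bigr)=\sum_i\alpha_iS^t(x_i)$. Testing invariance of $\ker E$ on $\rg\kappa_a+\rg\kappa_b-\rg\kappa_{a+b}$ and on $c\rg\kappa_a-\rg\kappa_{ca}$ therefore forces $S^t(a+b)=S^t(a)+S^t(b)$ and $S^t(ca)=cS^t(a)$ whenever these points lie in $\mbb X$. In other words, the paper's extra hypothesis essentially confines the proposition to linear flows. Your Step 2 covers genuinely nonlinear systems, and it also makes the subsequent remark, which seeks conditions guaranteeing invariance of $\ker E$, unnecessary for this implication. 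The paper's route buys only literal agreement with the stated hypotheses.

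Your attention to identifying $\ker\Psi^\infty_\leftarrow$ with $\mc N$, using continuity of $t\mapsto CT^tz$ from strong continuity and boundedness of $C$, is the right and only technical point needing care.
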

\begin{proof}
    Since the $\ms{KAO}_\infty$ property means that $\{z\in Z: CT^tz=0, \forall t\in \mR_+\}$ is contained in $\ker E$, and by the invariance assumption on $\ker E$, $ET^tz=0$ for all $t\in \mR_+$. 
\end{proof}
\begin{remark}
    As argued in \Cref{rem:homeomorphism-Schur}, $\ker E$ is invariant under the semigroup $\{T^t\}$ if the system is a $d$-dimensional diagonal linear one, or brought so by a $\rg C^s$-homeomorphism $\psi$. 
    Therefore, if a $\rg C^s$-homeomorphism $\psi$ exists such that $\psi\circ S^t\circ \psi^{-1}: x\mapsto \mr{e}^{tF}x$ and $F=\mr{D}f(0)$ is diagonalizable, then $\ms{KAO}_\infty \Rightarrow \ms{KD}$.
\end{remark}

\begin{definition}
    The system \eqref{eq:ACE} is said to be \emph{Koopman exponentially detectable ($\ms{KED}$)} if there exists an $L\in \mc{B}(\mR^m, Z)$ such that $\check{A}=A-LC$ is the infinitesimal generator of a strongly continuous semigroup $\{\check{T}^t\}$ satisfying $\|E\check{T}^t\|\leq c\mr{e}^{\alpha t}$ for all $t\in \mR_+$ with constants $c>0$ and $\alpha<0$. 
    The system \eqref{eq:ACE} is \emph{Koopman strongly detectable ($\ms{KSD}$)} if there exists an $L\in \mc{B}(\mR^m, Z)$ such that for all $z\in Z$, $\lim_{t\rightarrow\infty} E\check{T}^t z = 0$.
\end{definition}

Obviously, the requirement of Koopman exponential or strong detectability is much weaker than exponential or strong detectability of the infinite-dimensional system $(A,C)$ in the original non-Koopman sense.\footnote{As pointed out in Theorem 8.1.3 of \cite{curtain2020introduction}, for $(A,C)$ to be $\alpha$-exponentially stabilizable, the part of the spectrum of $A$ on the right of $\Re \lambda = \alpha-\epsilon$ (for some $\epsilon>0$) must have at most finitely many eigenvalues with finite-dimensional eigenspaces. 
This would not be satisfied, e.g., if $A$ has a continuous spectrum on the imaginary axis. To the end of state observation only on the $d$-dimensional space $\mbb{X}$, such a strong condition for infinite-dimensional RKHS observation may nonetheless be unnecessary.}

\begin{proposition}[$\ms{KXO}_\tau \wedge \ms{KOS} \Rightarrow \ms{KSD}$]
    Suppose that $(A, C, E)$ is $\ms{KXO}_\tau$ for some $\tau>0$ and $\ms{KOS}$. Then $(A,C,E)$ is $\ms{KSD}$. 
\end{proposition}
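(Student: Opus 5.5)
The natural candidate is the infinite-dimensional analogue of the classical construction in Curtain and Zwart: take the observer gain to be $L = \Pi^{-1}C^*$ for a suitable positive Gramian-type operator. Here, however, we only need convergence of $E\check T^t z$, so I would use the \emph{output-stable} Gramian $\Pi^\infty$ together with $\ms{KXO}_\tau$. The plan has three steps.

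\textbf{Step 1: set up the Lyapunov inequality.} By $\ms{KOS}$ and \Cref{th:ALE-KOS}, $\Pi^\infty\succeq 0$ solves
\begin{equation*}
\ip{\Pi^\infty z}{Az}+\ip{Az}{\Pi^\infty z}=-|Cz|^2 \quad\text{on } \mc D(A).
\end{equation*}
Since $\Pi^\infty \succeq \Pi^\tau$, $\ms{KXO}_\tau$ and \Cref{th:Gram.1} give $\ip{\Pi^\infty z}{z}\ge \gamma_\tau^2|Ez|^2$. To avoid inverting $\Pi^\infty$, which need not be invertible on $Z$, I would take the simplest gain, $L=\epsilon C^*$ with small $\epsilon>0$, or even $L=0$ if that already works.

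\textbf{Step 2: Lyapunov functional along $\check T^t$.} Set $V(\eta)=\ip{\Pi^\infty\eta}{\eta}+\mu\|\eta\|^2$. For $L=0$, $V$ decreases along $T^t$ with $\dot V=-|C\eta|^2+\mu\,\tfrac{\mathrm d}{\mathrm dt}\|\eta\|^2$. This is problematic, since $\|T^t\|$ may grow. I therefore expect the cleaner route to be direct:
\begin{itemize}
\item With $L=0$, \Cref{th:ALE-KOS} asserts $(\Pi^\infty)^{1/2}T^tz\to 0$ for all $z$.
\item Then $|ET^tz|^2\le \gamma_\tau^{-2}\ip{\Pi^\infty T^tz}{T^tz}\to 0$.
\end{itemize}
Hence $L=0$, which is certainly in $\mc B(\mR^m,Z)$, already witnesses $\ms{KSD}$, with $\check T^t=T^t$. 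The key steps in order are:
\begin{enumerate}
\item the bound $\Pi^\infty\succeq\Pi^\tau\succeq\gamma_\tau^2E^*E$, which follows from monotonicity of $\int_0^\tau$ in $\tau$ and the nonnegative integrand;
\item the decay $\|(\Pi^\infty)^{1/2}T^tz\|\to0$ from \Cref{th:ALE-KOS};
\item combining the two via $|Ez|\le\gamma_\tau^{-1}\|(\Pi^\infty)^{1/2}z\|$.
\end{enumerate}

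\textbf{Step 3: the main obstacle.} The difficulty is justifying the decay $(\Pi^\infty)^{1/2}T^tz\to0$ if one does not simply cite \Cref{th:ALE-KOS}. I would argue it as follows. First, $\ip{\Pi^\infty T^tz}{T^tz}=\int_t^\infty|CT^sz|^2\,\mathrm ds$, which follows from the semigroup property applied to the definition of $\Psi^\infty$. Second, this is the tail of a convergent integral, because $\Psi^\infty z\in L^2(\mR_+,\mR^m)$ by $\ms{KOS}$, so it tends to $0$.

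If instead a nontrivial gain is desired, for instance for robustness, I would take $L=\epsilon C^*$ and check that $\ip{\Pi^\infty\eta}{\eta}$ remains nonincreasing. This requires the cross term $-\epsilon\,\ip{\Pi^\infty C^*C\eta}{\eta}$ to be dominated by $-|C\eta|^2$, which holds for $\epsilon\|\Pi^\infty\|\le 1$ after a Young's inequality. Integrating the dissipation then gives $C\check T^{(\cdot)}z\in L^2$, and the same tail argument applies with the Gramian of $(\check A,C)$. I would present the $L=0$ argument as the proof and mention this variant only as a remark.
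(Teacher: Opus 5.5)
Your proof is correct, and it takes a genuinely different and simpler route than the paper's. You take $L=0$, so $\check T^t=T^t$, and apply the $\ms{KXO}_\tau$ inequality to the shifted initial condition $T^tz$:
\[
|ET^tz|^2\le\gamma_\tau^{-2}\ip{\Pi^\tau T^tz}{T^tz}=\gamma_\tau^{-2}\int_t^{t+\tau}|CT^sz|^2\,\xD{s}\le\gamma_\tau^{-2}\int_t^{\infty}|CT^sz|^2\,\xD{s}\to 0 .
\]
This uses $\ms{KOS}$ only to know that $CT^{(\cdot)}z\in L^2(\mR_+,\mR^m)$. You do not even need $\Pi^\infty$ or the decay clause of \Cref{th:ALE-KOS}, because the window $[t,t+\tau]$ already suffices.

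The paper proceeds differently:
\begin{enumerate}
\item It builds a nonzero gain $L=\mr{diag}(0,(\Pi^\infty_{11})^{-1})C^*$ on the splitting $Z=\ker E\oplus Z_1$, using \Cref{th:Gram.2} to invert $\Pi^\infty_{11}$.
\item It uses $v(\eta)=\ip{\eta}{\Pi^\infty\eta}$ to obtain $\int_0^\infty|C\eta(t)|^2\,\xD{t}\le\ip{\eta(0)}{\Pi^\infty\eta(0)}$ along the closed-loop error.
\item It invokes invariance of $\ms{KXO}_\tau$ under output injection, i.e.\ that $(A-LC,C,E)$ is again $\ms{KXO}_\tau$, and then runs the same tail argument.
\end{enumerate}
Your route avoids the block gain, the closed-loop Lyapunov computation (whose cross term $\ip{\eta}{\Pi^\infty LC\eta}$ needs care), and the output-injection step. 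It also makes clear that under these hypotheses the open-loop dynamics are already Koopman strongly detectable, so the proposition is essentially immediate. The paper's route aims at more than the statement requires: a nonzero injection gain with a Lyapunov certificate, in the spirit of the synthesis in \Cref{sec:synthesis}.

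You should drop the closing remark on $L=\epsilon C^*$, because its justification is wrong. Along $\check A=A-\epsilon C^*C$ one gets $\dot v=-|C\eta|^2-2\epsilon\ip{C\Pi^\infty\eta}{C\eta}$. The cross term is in general sign-indefinite and cannot be absorbed into $-|C\eta|^2$, since $|C\Pi^\infty\eta|$ is not controlled by $|C\eta|$. Young's inequality leaves a positive residual of order $\epsilon^2|C\Pi^\infty\eta|^2$ for every $\epsilon>0$. This is exactly why Luenberger-type gains are taken of the form $\Pi^{-1}C^*$: that choice turns the cross term into $-2|C\eta|^2$.
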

\begin{proof}
    Decompose $Z$ as the direct sum $Z_0\oplus Z_1$, where $Z_0=\ker E$, and correspondingly write $\eta=\eta_0+\eta_1$ for any $\eta\in Z$ and $C=[C_0, C_1]$. 
    Under the given hypotheses, by \Cref{th:Gram.2}, there exists a $\gamma_\infty>0$ such that $\Pi^\infty_{11} \succeq \gamma_\infty I$. Let $L = \begin{bmatrix} 0 & 0 \\ 0 & (\Pi^\infty_{11})^{-1} \end{bmatrix} C^\ast$, which is now a bounded operator.
    We define a Lyapunov functional $v(\eta) = \ip{\eta}{\Pi^\infty \eta}$ and verify 
    \begin{equation*}\begin{aligned}
        \dot{v} = \ip{\eta}{(\Pi A + A^*\Pi)\eta} - 2\|C_1\eta_1\|^2 \leq -\|C\eta\|^2,
    \end{aligned}\end{equation*}
    due to the $\ms{KOS}$ condition and \Cref{th:ALE-KOS} which justifies $\Pi A + A^*\Pi = -C^*C$. The integration of the above inequality gives 
    \begin{equation*}\int_0^\infty \|C\eta(t)\|^2 \xD{t}\leq \ip{\eta(0)}{\Pi^\infty \eta(0)}.\end{equation*}
    As an implication, $\int_t^\infty \|C\eta(t')\|\xD{t'} \to 0$ as $t\to\infty$. 
    Now, because $(A, C)$ is $\ms{KXO}_\tau$, we must have that $(A-LC, C)$ is $\ms{KXO}_\tau$ too, and therefore, $|E\eta(0)|^2 \lesssim \int_0^\infty \|C\eta(t)\|^2 \xD{t}$. The redefinition of the starting time gives 
    \begin{equation*}|E\eta(t)|^2 \lesssim \int_0^\infty \|C\eta(t+t')\|^2 \xD{t'} = \int_t^\infty \|C\eta(t')\|^2 \xD{t'} \xrightarrow{t\to 0} 0.\end{equation*}
    This is the $\ms{KSD}$ property to be proved. 
\end{proof}

\begin{proposition}[$\ms{KXO}_\tau \wedge \ms{KOS} \wedge Z_1 \text{ invariance}\Rightarrow \ms{KED}$]
    Suppose that $(A, C, E)$ is $\ms{KXO}_\tau$ for some $\tau>0$ and $\ms{KOS}$, and in addition, $Z_1$ is forward-invariant under $\{\check T^t\}_{t\in \mR_+}$. Then $(A,C,E)$ is $\ms{KED}$. 
\end{proposition}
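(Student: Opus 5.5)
We reuse the construction from the proof of the preceding proposition and strengthen the conclusion from strong to exponential convergence. We decompose $Z=Z_0\oplus Z_1$ with $Z_0=\ker E$ and $Z_1=\mr{span}\{e_k\}_{k=1}^d$ as in \Cref{rem:kerE}, so $Z_1\perp Z_0$ and $E$ restricted to $Z_1$ is an isomorphism onto $\mR^d$. By $\ms{KXO}_\tau$ and $\ms{KOS}$, \Cref{th:Gram.2} gives $\Pi^\infty_{11}\succeq\gamma_\infty^2 I$ on $Z_1$. We take the same gain $L$, built from $(\Pi^\infty_{11})^{-1}$ and $C^*$ and acting into $Z_1$, and the same Lyapunov functional $v(\eta)=\ip{\eta}{\Pi^\infty\eta}$. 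Along $\check T^t$ this yields $\dot v\le -\|C\eta\|^2$, rigorously via \Cref{th:ALE-KOS} on $\mc D(A)$ and then density for mild solutions.

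The new ingredient is the forward invariance of $Z_1$ under $\check T^t$. For $\eta(0)\in Z_1$, the trajectory $\eta(t)$ stays in the finite-dimensional space $Z_1$. There, three quantities are comparable:
\begin{equation*}
v(\eta)=\ip{\eta}{\Pi^\infty_{11}\eta}, \qquad |E\eta|^2, \qquad \|\eta\|^2.
\end{equation*}
The upper bound $v\le\beta^2|E\eta|^2$ comes from $\ms{KOS}$. The lower bound $v\ge\gamma_\infty^2|E\eta|^2$ comes from \Cref{th:Gram.2}. It remains to bound $\|C\eta\|^2$ from below by $v$, i.e., to compare $\|C\eta\|^2\ge c\,v(\eta)$ on $Z_1$. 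Directly this may fail if $C$ has a kernel in $Z_1$. So instead I would integrate over windows of length $\tau$. Since $(A-LC,C,E)$ is again $\ms{KXO}_\tau$ (bounded output injection preserves exact observability, as used in the previous proof), Theorem~\ref{th:Gram.1} gives
\begin{equation*}
\int_t^{t+\tau}\|C\eta\|^2\,\xD{s}\ \ge\ \gamma_\tau^2|E\eta(t)|^2\ \ge\ (\gamma_\tau^2/\beta^2)\,v(\eta(t)).
\end{equation*}
Combining this with $v(\eta(t+\tau))\le v(\eta(t))-\int_t^{t+\tau}\|C\eta\|^2\,\xD{s}$ gives the contraction $v(\eta(t+\tau))\le(1-\gamma_\tau^2/\beta^2)\,v(\eta(t))$. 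Iterating, and using boundedness of $\check T^t$ on $[0,\tau]$ to handle intermediate times, gives $|E\check T^t\eta|\le c\,\mr e^{\alpha t}|E\eta|$ for $\eta\in Z_1$, with $\alpha=\tau^{-1}\log(1-\gamma_\tau^2/\beta^2)/2<0$.

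For general $\eta=\eta_0+\eta_1$, we have $E\check T^t\eta=E\check T^t\eta_1+E\check T^t\eta_0$. The first term decays exponentially by the above. The main obstacle is the $Z_0$ component: we need $E\check T^t\eta_0$ to decay exponentially too. My plan is to write $\check T^t\eta_0$ via the variation-of-constants formula, so that its $Z_1$-part is driven by $LC\check T^s\eta_0$, which lies in $Z_1$ since $\ran L\subseteq Z_1$. Then:
\begin{itemize}
\item If $Z_0$ is invariant under $T^t$, the $Z_0$-part contributes nothing to $E$.
\item Otherwise, the cross term $\Pi^\infty_{01}$ enters, and I would instead apply the same windowed-contraction argument to $v$ on all of $Z$. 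This uses $v\ge\gamma^2_\infty|E\eta|^2$ together with $v(\eta(t))\to$ control of $|E\eta(t)|$, plus the bound $\int_t^{t+\tau}\|C\eta\|^2\ge\gamma_\tau^2|E\eta(t)|^2$.
\end{itemize}
Closing this second case needs $v\lesssim|E\eta|^2$ on all of $Z$, which $\ms{KOS}$ indeed provides ($\Pi^\infty\preceq\beta^2E^*E$). So the contraction inequality above actually holds for all $\eta$, and $\|E\check T^t\|\le c\,\mr e^{\alpha t}$ follows. In that reading, the invariance of $Z_1$ is mainly needed to make the gain construction and the preservation of $\ms{KXO}_\tau$ under $A-LC$ consistent. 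I would check that step carefully.
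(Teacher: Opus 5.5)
Your final argument is correct, and it takes a genuinely different route from the paper's. The paper argues qualitatively. It imports strong detectability ($E\check T^t\eta\to 0$ for every $\eta$) from the preceding proposition and uses invariance to pass to a semigroup $\check T_1^t$ on the $d$-dimensional space $Z_1$. There strong stability forces exponential stability (Datko--Pazy), and it concludes with $\|E\check T^t\|\lesssim\|\check T_1^t\|$.

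You instead run a quantitative Lyapunov argument on all of $Z$. Dissipation of $v$ over a window, the closed-loop observability inequality on that window, and the $\ms{KOS}$ bound $v\le\beta^2|E\eta|^2$ give $v(\eta(t+\tau))\le q\,v(\eta(t))$ with $q<1$. The lower bound $v\ge\gamma^2|E\eta|^2$ (from $\Pi^\infty\succeq\Pi^\tau$) then turns this into $\|E\check T^t\|\le c\,\mr{e}^{\alpha t}$.

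Your route gives an explicit $\alpha$ and avoids Datko--Pazy. Contrary to your closing remark, it never uses the forward invariance of $Z_1$. The gain only needs $\Pi^\infty_{11}$ to be invertible, and bounded output injection preserves exact observability on $[0,\tau]$ regardless of any invariance. So you can delete the exploratory $Z_1$-only case and the variation-of-constants plan; what remains proves the statement without that hypothesis.

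The paper's route is shorter and reuses the $\ms{KSD}$ result. However, its reduction to $Z_1$ tacitly needs $E\check T^t\eta$ to depend only on $\eta_1$, i.e.\ invariance of $Z_0$ rather than $Z_1$. This happens to hold under $\ms{KOS}$ and $\ms{KXO}_\tau$ (see below), but the paper does not argue it.

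Two steps in your write-up each need a line of justification.

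First, the window constant for $A-LC$ is not $\gamma_\tau$. Variation of constants gives the identity $CT^{(\cdot)}\eta=(I+\mc{F})\,C\check T^{(\cdot)}\eta$ on $L^2([0,\tau],\mR^m)$, with the Volterra operator $(\mc{F}w)(t)=\int_0^t CT^{t-s}Lw(s)\,\xD{s}$. From this you get $\check\gamma_\tau=\gamma_\tau/\|I+\mc{F}\|$, and this is what enters $q=1-\check\gamma_\tau^2/\beta^2$.

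Second, the borrowed inequality $\dot v\le-\|C\eta\|^2$ is not automatic, because the cross term $-2\ip{\Pi^\infty\eta}{LC\eta}$ is unsigned in general. It is signed here because $\ms{KOS}$ forces both $\Pi^\infty$ and $C$ to vanish on $Z_0$. This follows from $0\preceq\Pi^\infty\preceq\beta^2E^*E$ and from $\Psi^\infty z_0=0$ for $z_0\in Z_0$. Hence $\ran C^*\subseteq Z_1$, $LC\eta=(\Pi^\infty_{11})^{-1}C^*C\eta$, and in fact $\dot v=-3\|C\eta\|^2$.

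The same observation, combined with the Volterra identity, gives $C\check T^tz_0=0$. Therefore $\check T^tz_0=T^tz_0\in\ker\Psi^\tau\subseteq\ker E$, which is the $Z_0$-invariance the paper's proof implicitly relies on.
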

\begin{proof}
    Previously we have proved that $|E\eta(t)|^2\to 0$ as $t\to \infty$ for all $\eta\in Z$. Since $Z_1$ is a $d$-dimensional subspace of $Z$ that complements $Z_0=\ker E$, $|E\eta(t)|$ can be replaced by $|\eta_1(t)|$. 
    Due to the invariance of $Z_1$, $\eta_1(t)$ evolves under a projected semigroup $\{\check T_1^t\}_{t\in \mR_+}$. Hence, $|\check T_1^t\eta|\to 0$ for any $\eta\in Z$. By the Datko--Pazy theorem, $\{\check T_1^t\}$ must be an exponentially stable semigroup. Hence, $\|E\check T^t\| \lesssim \|\check T_1^t\| \leq c\mr{e}^{\alpha t}$ for some $c>0$ and $\alpha<0$. 
\end{proof}

To the end of computing an observer gain $L$ that drives the evolution of observation error $\eta(t)$ so that $E\eta(t)\to 0$ (and hence $\hat x(t)-x(t)\to 0$), an operator Lyapunov inequality condition is established in the following theorem. 
\begin{theorem}[Lyapunov inequality for the observer]
\label{th:ALE-Observer}
    If the system $(A, C, E)$ is $\ms{KED}$, then for some $L\in \mc{B}(\mR^m, Z)$, there exists $0 \preceq P\in \mc{B}(Z)$ that is positive on $Z_1 = Z_0^\perp$, such that 
    \begin{equation}\label{eq:oALE.observer}
        \ip{\check{A}\eta}{P\eta} + \ip{P\eta}{\check{A}\eta} \leq -k\ip{E\eta}{E\eta}, \, \forall \eta\in \mc{D}(A)=\mc{D}(\check{A})
    \end{equation} 
    holds for some $k>0$. Conversely, if the conditions there exists such a solution to \eqref{eq:oALE.observer} with some some $L\in \mc{B}(\mR^m, Z)$, then the system $(A, C, E)$ in \eqref{eq:ACE} is at least $\ms{KSD}$; if, moreover, $Z_0=\ker E$ and $Z_1$ are both invariant, the the system is $\ms{KED}$; if, furthermore, $\ran C^* \subseteq Z_1$, the solution $L\in \mc{B}(\mR^m, Z)$ can be chosen such that $\ran L \subseteq \ran Z_1$.\footnote{The interpretation of $\ran L \subseteq Z_1$ is natural: in the observer \eqref{eq:observer}, the innovation term $y - C\hat{z}$ does not need to cause the velocity $\dot{\hat{z}}(t)$ to have a component in $Z_0=\ker E$ which has no effect on the state estimate. }
\end{theorem}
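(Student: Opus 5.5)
For the forward direction, fix the gain $L$ from the $\ms{KED}$ definition, so that $\|E\check T^t\|\le c\mr{e}^{\alpha t}$ with $\alpha<0$. The candidate is the output-type Gramian with $E$ in place of $C$,
\begin{equation*}
P := \int_0^\infty \check T^{t*}E^*E\check T^t \,\xD{t}.
\end{equation*}
The integral converges in operator norm because $\|E\check T^t\|^2\le c^2\mr{e}^{2\alpha t}$, so $P\in\mc B(Z)$ and $P\succeq 0$.

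For $\eta\in\mc D(\check A)$, the function $t\mapsto |E\check T^t\eta|^2$ is differentiable with derivative $2\ip{E\check T^t\check A\eta}{E\check T^t\eta}$. Integrating over $\mR_+$ and using the exponential decay gives the Lyapunov \emph{equation}
\begin{equation*}
\ip{\check A\eta}{P\eta}+\ip{P\eta}{\check A\eta} = -|E\eta|^2.
\end{equation*}
This is the same computation as in \Cref{th:ALE-KOS}, with $E$ replacing $C$. So \eqref{eq:oALE.observer} holds with $k=1$.

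It remains to show $P$ is positive on $Z_1$. For $\eta\in Z_1\setminus\{0\}$ we have $E\eta\neq0$, since $Z_1=\mr{span}\{e_k\}$ by \Cref{rem:kerE}. By continuity, $|E\check T^t\eta|^2$ is positive on a neighborhood of $t=0$, hence $\ip{P\eta}{\eta}>0$. Because $Z_1$ is finite-dimensional, this gives uniform positivity $P_{11}\succeq \epsilon I$.

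For the converse, set $v(t)=\ip{P\eta(t)}{\eta(t)}$ along $\eta(t)=\check T^t\eta$. For $\eta\in\mc D(\check A)$ the inequality gives $\dot v\le -k|E\eta(t)|^2$, hence
\begin{equation*}
k\int_0^\infty|E\check T^t\eta|^2\,\xD{t}\le \ip{P\eta}{\eta}.
\end{equation*}
This bound extends to all of $Z$ by density, since both sides are continuous in $\eta$ (for the left side, apply Fatou's lemma on bounded intervals). It yields $E\check T^{(\cdot)}\eta\in L^2(\mR_+,\mR^d)$. To upgrade this to pointwise convergence $E\check T^t\eta\to0$, which is the $\ms{KSD}$ property, I would use the positivity of $P$ on $Z_1$. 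The plan is to show that $v$ is nonincreasing and to bound $|E\eta(t)|$ through $v$ via the block structure of $P$.

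I expect this step to be the main obstacle. Boundedness alone does not control $\eta_1(t)$, because the cross terms $P_{01}$ interfere. If the argument fails in general, I would fall back on showing that $t\mapsto E\check T^t\eta$ is uniformly continuous on $\mR_+$, and then conclude by a Barbalat-type argument from square integrability. That fallback needs $\|\check T^t\|$ to be bounded, which is plausible from $P\succeq0$ but not automatic.

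Under invariance of both $Z_0$ and $Z_1$, $\check T^t$ becomes block diagonal, so $E\check T^t\eta=E\check T_1^t\eta_1$ with $\check T_1^t$ a semigroup on the $d$-dimensional space $Z_1$. Strong convergence to zero on a finite-dimensional space then implies exponential stability, as in the preceding proposition or by the Datko--Pazy theorem, which gives $\ms{KED}$.

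Finally, suppose $\ran C^*\subseteq Z_1$. Write $L=L_0+L_1$ according to $Z_0\oplus Z_1$ and replace $L$ by $L_1$. Then $C$ vanishes on $Z_0$, and the invariance makes $A$ block diagonal, so the $Z_1$-block of $\check A$ is unchanged while the $Z_0$-block becomes just $A_{00}$. Since $E$ only sees the $Z_1$ component, taking $P_{11}$ from the original solution and $P_{00}=0$, $P_{01}=0$ reproduces \eqref{eq:oALE.observer}. Hence $\ran L\subseteq Z_1$.
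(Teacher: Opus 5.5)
Your forward direction and the integrated bound $k\int_0^\infty|E\check T^t\eta|^2\,\xD{t}\le\ip{P\eta}{\eta}$ are the paper's argument: the same Gramian, with $k=1$. You add more care on positivity over $Z_1$ and on the density extension. The gap is the step you flag yourself: you never show $E\check T^t\eta\to0$, so $\ms{KSD}$ is not proved. The paper does not close this either; it only asserts that square integrability ``implies $E\eta(t)\to0$''.

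Your fallbacks do not rescue the step. Barbalat needs uniform continuity of $t\mapsto E\check T^t\eta$, which essentially means $\sup_t\|E\check T^t\|<\infty$. The inequality gives no control of $\check T^t$ beyond the $E$-direction, so this is not ``plausible from $P\succeq0$''.

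What does follow is the claim on $\mc{D}(\check A)$. Apply your extended bound also to $\check A\eta$. Then $h(t)=E\check T^t\eta$ and $h'(t)=E\check T^t\check A\eta$ both lie in $L^2(\mR_+;\mR^d)$. So $|h|^2$ is integrable and has integrable derivative $2h\cdot h'$, hence $h(t)\to0$.

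Extending this to all of $Z$ needs an extra hypothesis such as $\sup_t\|E\check T^t\|<\infty$. No argument using only the abstract semigroup structure can avoid this, as the following example shows:
\begin{itemize}
\item Take $C=0$ and $Z=\bigoplus_{j\in\mN}L^2\bigl((0,j+1);\mr{e}^{-2\omega s}\xD{s}\bigr)$, with the nilpotent left shift on each summand, so $\|T^t\|\le\mr{e}^{\omega t}$.
\item Let $E\eta=\sum_j\theta_j\delta_j^{-1/2}\int_0^{\delta_j}\eta_j(s)\,\xD{s}$ with $\theta_j=\mr{e}^{-\omega j/2}$ and $\delta_j=2^{-j}\mr{e}^{-\omega(j+2)}$.
\item Then $\int_0^\infty|ET^t\eta|^2\,\xD{t}\le\|\eta\|^2$ for all $\eta\in Z$. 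So the Gramian is a bounded solution with $k=1$, positive on $(\ker E)^\perp$.
\item Yet $\eta_j=\theta_j^{-1}\delta_j^{-1/2}\chi_{[j,j+\delta_j]}$ (an indicator) defines an element of $Z$ with $ET^j\eta=1$ for every $j\in\mN$.
\end{itemize}
So this step is missing both in your proposal and in the paper, and closing it requires either an added assumption or an argument that uses the Koopman/RKHS structure.

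There are two smaller points.

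First, in the invariant case you do not need pointwise convergence. On the finite-dimensional block, the $L^2$ bound already gives $\int_0^\infty|E\,\mr{e}^{t\check A_{11}}\xi|^2\,\xD{t}<\infty$ for all $\xi\in Z_1$. Since $E$ is injective on $Z_1=(\ker E)^\perp$, this forces $\check A_{11}$ to be Hurwitz. Note that block-diagonality of $\check T^t$ needs $Z_0$ and $Z_1$ invariant under $\{\check T^t\}$, not merely under $\{T^t\}$.

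Second, in your last paragraph, if only $A$ is block diagonal, then $\check A$ has off-diagonal block $-L_0C_1$. The $Z_1$-compression of the original inequality then contains the cross term $-P_{10}L_0C_1-C_1^*L_0^*P_{01}$. Dropping $L_0$ removes that term, so reusing $P_{11}$ is not justified. Instead, use that $A_{11}-L_1C_1$ is Hurwitz, by the argument above. Then solve $P_{11}'(A_{11}-L_1C_1)+(A_{11}-L_1C_1)^*P_{11}'=-kE_1^*E_1$ afresh, with $E_1:=E|_{Z_1}$, and take $(0\oplus P_{11}',L_1)$.
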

\begin{proof}
    If $\ms{KED}$ holds, then $P = \int_0^\infty \check{T}^{t\ast} E^\ast E \check{T}^t \xD{t}$ is indeed bounded, nonnegative, and positive on $Z_1$. With the fact that $\xD{\check{T}^t \eta}/\xD{t}=A\check{T}^t\eta =\check{T}^tA \eta$ for all $\eta\in \mc{D}(A)$, the Lyapunov inequality \eqref{eq:oALE.observer} is verified with $k=1$. 
    Conversely, if the Lyapunov equation is solved by an operator $P$ as required, then the examination of Lyapunov functional $v(\eta) = \ip{\check{T}^t \eta}{P\check{T}^t \eta}$ verifies that 
    \begin{equation*}
        \int_0^\infty |E \check{T}^t \eta|^2\xD{t} \leq \ip{\eta(0)}{P\eta(0)}\text{, i.e., }\int_0^\infty |E\eta(t)|^2\xD{t} < \infty 
    \end{equation*}for any given $\eta\in Z$. This implies $E\eta(t)\to 0$, i.e., the system is $\ms{KSD}$. 
    The $\ms{KED}$ property in the case with invariant $Z_0$ and $Z_1$ is proved in the same way as in the previous theorem. In this case, $A$ is block-diagonal, $E^*E$ only contains the lower-right block (as $Z_0=\ker E$), and $\ran C^*\subseteq Z_1$ implies $C = \begin{bmatrix} 0, * \end{bmatrix}$, then $A-LC$ is upper block triangular. 
    The satisfiability of \eqref{eq:oALE.observer} is not affected by the off-diagonal block, i.e., the upper block of $L = \begin{bmatrix} 0 \\ * \end{bmatrix}$ can be set to zero. That is, $\ran L \subseteq \ran C^*$. 
\end{proof}

\begin{remark}
    The condition that $\ran C^* \subseteq Z_1$ is interpreted as follows. By definition, the rows of $C$ correspond to $h_1, \cdots, h_m\in Z$, and hence the condition that left block of $C$ is zero means that $\ip{h_j}{z}=0$ for all $z\in \ker E$ and $j=1,\cdots,m$. 
    Then, by \Cref{rem:kerE} where $\ker E$ is expressed, we know that the foregoing condition is equivalent to 
    \begin{equation*}
    \sum_{i=1}^n \alpha_i h_j(x_i) = 0 \text{ whenever } \sum_{i=1}^n \alpha_i x_i=0
    \end{equation*}
    for any choice of $x_1, \cdots, x_n\in \mbb{X}$, $\alpha_1, \cdots,\alpha_n\in \mR$, and $n\in \mN$. This condition is satisfied when $h_1, \cdots, h_m$ are linear functions of $x$, namely, when the measured outputs are some state components or their linear combinations. 
\end{remark}

\subsection{Criteria for Riesz Spectrum Operators}\label{subsec:KO.test}
For a special class of generators $A$, namely when $A$ is a \emph{Riesz spectrum operator}, the $\ms{KAO}_\infty$ and $\ms{KED}$ properties can be related to the spectral attributes of $A$. The following results are presented here for theoretical completeness and comparison to the classical infinite-dimensional systems theory \cite{curtain2020introduction}, although it must be pointed out that for a given nonlinear system, it is generally difficult to verify whether its infinitesimal generator is a Riesz spectrum operator.  
\begin{definition}
    A closed and densely defined operator $A$ on $Z$ is called a \emph{Riesz spectrum operator} if it has simple eigenvalues $\{\lambda_j\}_{j=1}^\infty$ and associated eigenvectors $\{\phi_j\}_{j=1}^\infty$ which forms a Riesz basis\footnote{That is, $\mr{span}\{\phi_j\}_{j=1}^\infty$ is dense in $Z$ and there exists constants $0<c_1\leq c_2$ such that for any $n\in \mN$ and $\alpha_1, \dots, \alpha_n\in \mR$, $c_1\sum_{j=1}^n |\alpha_j|^2 \leq \|\sum_{j=1}^n \alpha_j\phi_j \|^2 \leq c_2\sum_{j=1}^n |\alpha_j|^2$. }
    and $\{\lambda_j\}_{j=1}^\infty$ has at most finitely many limit points. As such, we have a biorthogonal basis $\{\varphi_j\}_{j=1}^\infty$ such that $\ip{\phi_j}{\varphi_{j'}}_Z = \delta_{jj'}$, which determines the representation of the operator as:
    \begin{equation}\label{eq:Riesz.spectral}
        A = \sum_{j=1}^\infty \lambda_j\ip{\cdot}{\varphi_j}_Z \phi_j, \enspace z\in \mc{D}(A),
    \end{equation}
    where $\mc{D}(A) =\{z\in Z:\sum_{j=1}^\infty |\lambda_j|^2|\ip{z}{\varphi_j}|^2 <\infty\}$. 
\end{definition}

 \begin{theorem}
 \textit{(Test of $\ms{KAO}_\infty$ for Riesz spectral systems)}
    Suppose that $A$ is a Riesz spectrum operator with representation \eqref{eq:Riesz.spectral}, $C\in \mc{L}(Z,\mR^m)$, and $E\in \mc{L}(Z, \mR^d)$. Then the system $(A, C, E)$ is Koopman approximately observable in infinite time if and only if all eigenfunctions $\phi_j$ that satisfy $C\phi_j=0$ also satisfy $E\phi_j=0$, i.e., if and only if any eigenfunction $\phi_j$ not belonging to $\ker E$ must make $C\phi_j\neq 0$. 
\end{theorem}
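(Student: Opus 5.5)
We reduce $\ms{KAO}_\infty$ to a statement about the unobservable subspace. The paper notes after the definition of $\ms{KAO}_\infty$ that condition (i) is equivalent to
\begin{equation*}
\mc N := \{z\in Z: CT^tz=0,\ \forall t\in\mR_+\} \subseteq \ker E .
\end{equation*}
So the task is to characterize $\mc N$ through the Riesz-spectral data.

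\textbf{Step 1: the semigroup in the Riesz basis.} By the standard theory of Riesz spectral operators (Curtain and Zwart, Ch.~3), $A$ generates the semigroup
\begin{equation*}
T^tz=\sum_j \mr{e}^{\lambda_j t}\ip{z}{\varphi_j}\phi_j .
\end{equation*}
Since $C$ is bounded, this gives
\begin{equation*}
CT^tz=\sum_j \mr{e}^{\lambda_j t}\ip{z}{\varphi_j}\,C\phi_j ,
\end{equation*}
with convergence uniform in $t$ on compact sets. Here I identify the Perron--Frobenius generator with the given $A$; the statement concerns the triple $(A,C,E)$ with this $A$.

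\textbf{Step 2: sufficiency.} Assume $C\phi_j=0\Rightarrow E\phi_j=0$, and let $J_0=\{j: C\phi_j=0\}$. The key claim is that $z\in\mc N$ forces $\ip{z}{\varphi_j}=0$ for every $j\notin J_0$. Granting this, $z=\sum_{j\in J_0}\ip{z}{\varphi_j}\phi_j$, and boundedness of $E$ gives $Ez=\sum_{j\in J_0}\ip{z}{\varphi_j}E\phi_j=0$, so $\mc N\subseteq\ker E$.

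To prove the claim, group the indices by distinct eigenvalues. They are simple, so each group is a single $j$. We need uniqueness of generalized Dirichlet series: if $\sum_j a_j\mr{e}^{\lambda_j t}c_j\equiv0$ with $c_j\in\mR^m$, then $a_jc_j=0$ for each $j$.

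The route I would take is the Laplace transform. For $\Re s$ large,
\begin{equation*}
\int_0^\infty \mr{e}^{-st}CT^tz\,\xD{t}=\sum_j \frac{\ip{z}{\varphi_j}}{s-\lambda_j}\,C\phi_j \equiv 0 .
\end{equation*}
The right-hand side equals $C(sI-A)^{-1}z$, which is analytic on $\rho(A)$. Its residue at an isolated eigenvalue $\lambda_j$ is $\ip{z}{\varphi_j}C\phi_j$, obtained by applying $C$ to the Riesz projection $\ip{\cdot}{\varphi_j}\phi_j$.

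By the identity theorem, $C(sI-A)^{-1}z\equiv0$ on the connected component of $\rho(A)$ containing a right half-plane. An eigenvalue that is not a limit point of the spectrum is isolated, so it lies on the boundary of that component, and the residue computation gives $\ip{z}{\varphi_j}C\phi_j=0$. Hence $\ip{z}{\varphi_j}=0$ whenever $C\phi_j\neq0$.

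\textbf{Step 3: necessity.} Suppose $C\phi_j=0$ for some $j$. Then $T^t\phi_j=\mr{e}^{\lambda_jt}\phi_j$, so $CT^t\phi_j=0$ for all $t$ and $\phi_j\in\mc N$. $\ms{KAO}_\infty$ then forces $E\phi_j=0$. This direction is immediate.

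\textbf{Main obstacle.} The hard part is Step 2 when an eigenvalue $\lambda_j$ accumulates at one of the finitely many limit points, or when $\rho(A)$ is not connected. Then the isolation and connectivity in the residue argument need care.

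What I would try first: every eigenvalue is isolated in $\sigma(A)$, because the only non-isolated points are the finitely many limit points, and a limit point that is itself an eigenvalue is excluded by simplicity together with the Riesz basis property. Moreover, $\sigma(A)=\overline{\{\lambda_j\}}$ is countable, so its complement is connected and the identity theorem applies on all of $\rho(A)$.

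If this analytic route becomes delicate, the fallback is to differentiate $CT^tz$ at $t=0$ for $z$ in finite spans, or to use the almost-periodic uniqueness theorem for generalized Dirichlet series.
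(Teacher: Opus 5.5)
Your route is the paper's. You expand $CT^tz=\sum_j \mr{e}^{\lambda_j t}\ip{z}{\varphi_j}C\phi_j$, identify the unobservable subspace with the closed span of $\{\phi_j: C\phi_j=0\}$, and compare it with $\ker E$. The paper only asserts the coefficient-wise vanishing $\ip{z}{\varphi_j}C\phi_j=0$. It also writes ``span'' and ``equal to $\ker E$'' where it means closed span and containment. So your Laplace-transform/residue justification adds something the paper omits. The necessity direction, the bookkeeping via the Riesz expansion, and the connectedness of $\rho(A)$ are all correct: $\sigma(A)=\overline{\{\lambda_j\}}$ is closed and countable, so its complement is connected.

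The step you use to resolve your ``main obstacle'' is false. Simplicity plus the Riesz basis property do \emph{not} stop an eigenvalue from being a limit point of the others. Take an orthonormal basis $\{\phi_j\}$ and the diagonal operator $A=\sum_j\lambda_j\ip{\cdot}{\phi_j}\phi_j$ with $\lambda_1=0$ and $\lambda_j=-1/j$ for $j\geq 2$. It is bounded, its eigenvalues are simple, its eigenvectors form a Riesz basis, and it has exactly one limit point, so it meets the paper's definition (and Curtain--Zwart's). Yet $\lambda_1$ is not isolated in $\sigma(A)$, no contour in $\rho(A)$ encloses it alone, and your residue argument gives nothing at $j=1$.

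The repair is easy because the definition allows only finitely many limit points. Let $J_\infty$ be the finite set of indices whose $\lambda_j$ is not isolated. The residue argument gives $\ip{z}{\varphi_j}C\phi_j=0$ for every $j\notin J_\infty$. Then $CT^tz=\sum_{j\in J_\infty}\mr{e}^{\lambda_jt}\ip{z}{\varphi_j}C\phi_j\equiv 0$ is a finite exponential sum with distinct exponents. Equivalently, it is a finite partial-fraction sum $\sum_{j\in J_\infty}\ip{z}{\varphi_j}C\phi_j/(s-\lambda_j)$ vanishing on a half-plane. Either way, those remaining coefficients vanish too, and with this amendment your proof is complete.
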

\begin{proof}
    Based on the definition of $\{\phi_j,\varphi_j\}$, we have for all $z\in Z$ that 
    \begin{equation*}
        CT^tz = \sum_{j=1}^\infty e^{\lambda_j t}\ip{z}{\varphi_j} C\phi_j     
    \end{equation*}
    and hence the unobservable subspace of $Z$ is expressed as 
    \begin{equation*}
        \bigcap_{\tau>0} \ker CT^t = \BRA{z\in Z:  \ip{z}{\varphi_j} C\phi_j =0, \,\forall j\in\mN\backslash\{0\} }, 
    \end{equation*}
    which is further equal to $\mr{span}\{\phi_j:C\phi_j=0\}$. This set is equal to $\ker E$ if and only if all eigenfunctions $\phi_j$ of the Riesz spectral operator $A$ such that $C\phi_j=0$ is in $\ker E$. 
\end{proof}

The theorem below gives the criterion for $\ms{KED}$ when $A$ is a Riesz spectral operator. 
\begin{theorem}
\textit{(Test of $\ms{KED}$ for Riesz spectral systems)}
    Suppose that $(A,C,E)$ is a Riesz spectral system with representation of $A$ given in \eqref{eq:Riesz.spectral}. For $(A,C,E)$ to be $\ms{KED}$ at rate $\alpha<0$, it is necessary and sufficient that there exists an $\epsilon>0$, such that $\sigma_{\alpha-\epsilon}^+(A) = \{\lambda_j: \Re \lambda_j\geq \alpha-\epsilon \,\mr{and}\, E\phi_j\neq 0\}$ contains finitely many eigenvalues and $C\phi_j\neq 0$ for all $\lambda_j\in \sigma_{\alpha-\epsilon}^+(A)$. 
\end{theorem}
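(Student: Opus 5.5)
The plan is to reduce the statement to the classical spectral stabilizability result for Riesz spectral systems (Theorem 8.1.3 of \cite{curtain2020introduction}), applied to a reduced system from which the directions invisible to $E$ have been removed. Write $z=\sum_j \ip{z}{\varphi_j}\phi_j$. Split the index set into $J_0=\{j: E\phi_j=0\}$ and $J_1=\{j:E\phi_j\neq 0\}$. Put $Z^{(0)}=\overline{\mr{span}}\{\phi_j:j\in J_0\}$ and $Z^{(1)}=\overline{\mr{span}}\{\phi_j:j\in J_1\}$. Both subspaces are $T^t$-invariant, because $T^t\phi_j=\mr{e}^{\lambda_jt}\phi_j$. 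The operator $E$ vanishes on $Z^{(0)}$. Hence $E T^t z=\sum_{j\in J_1}\mr{e}^{\lambda_jt}\ip{z}{\varphi_j}E\phi_j$, and only the $J_1$ modes affect $E$. The Riesz-basis property makes the spectral projections onto $Z^{(0)}$ and $Z^{(1)}$ bounded.

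\emph{Sufficiency.} Let $\sigma^+_{\alpha-\epsilon}(A)=\{\lambda_{j_1},\dots,\lambda_{j_N}\}$. Define $P_N z=\sum_{i=1}^N\ip{z}{\varphi_{j_i}}\phi_{j_i}$ and $Z_N=\ran P_N$, so that $A_N=A|_{Z_N}=\mr{diag}(\lambda_{j_i})$ and $C_N=CP_N|_{Z_N}$. The eigenvalues are simple and $C\phi_{j_i}\neq0$ for each $i$. The Hautus test in finite dimensions then shows that $(A_N,C_N)$ is observable. Choose a matrix $L_N$ with $\sigma(A_N-L_NC_N)\subset\{\Re\lambda<\alpha-\epsilon\}$, and set $L=\iota_NL_N$, where $\iota_N$ is the inclusion of $Z_N$ into $Z$.

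The idea is that $A-LC$ is block upper triangular. One diagonal block acts on $Z_N$ and is $A_N-L_NC_N$. The other diagonal block is $A$ restricted to the complementary modes, and it is not affected by $L$. A variation-of-constants formula then gives
\[
E\check T^t z=E\mr{e}^{(A_N-L_NC_N)t}P_Nz+E\Big(T^t_{\perp}-\int_0^t \mr{e}^{(A_N-L_NC_N)(t-s)}L_NC\,T_\perp^s\,\xD{s}\Big)(I-P_N)z .
\]
The main obstacle lies in this formula. The complementary semigroup $T_\perp$ may contain modes in $J_0$ with $\Re\lambda_j\geq\alpha-\epsilon$, even with infinitely many such modes. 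Those modes are not damped.

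I would handle this as follows. Terms of the form $ET_\perp^t$ restricted to $J_0$ modes vanish identically. For the $J_1$ modes, $\|ET_\perp^t\|\lesssim \mr{e}^{(\alpha-\epsilon)t}$ follows from the Riesz bounds, because every remaining $J_1$ eigenvalue has $\Re\lambda_j<\alpha-\epsilon$. The convolution term, however, couples $L_NC$ with the growing $J_0$ modes. The fix I would attempt is to restrict the whole construction to the invariant subsystem on $Z^{(1)}$: replace $C$ by $C|_{Z^{(1)}}$, note that $Z^{(0)}$ is invariant and annihilated by $E$, and argue that it can be quotiented out. The difficulty is that $C$ may not vanish on $Z^{(0)}$, so the injection $LC$ still reads $Z^{(0)}$ components. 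If this cannot be avoided, I would strengthen the argument by requiring growth bounds on the $J_0$ modes, or by interpreting KED modulo $Z^{(0)}$.

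\emph{Necessity.} Suppose $L$ achieves $\|E\check T^t\|\le c\mr{e}^{\alpha t}$. Assume first that some $\lambda_j$ with $\Re\lambda_j\geq\alpha-\epsilon$ (for every small $\epsilon$, by a limiting argument) has $E\phi_j\neq0$ and $C\phi_j=0$. Then $\check T^{t*}\varphi_j$ can be tested against the structure. Since $C\phi_j=0$, the vector $\phi_j$ satisfies $(A-LC)\phi_j=\lambda_j\phi_j$, so $\check T^t\phi_j=\mr{e}^{\lambda_jt}\phi_j$. This gives $|E\check T^t\phi_j|=\mr{e}^{\Re\lambda_jt}|E\phi_j|$, which contradicts the decay rate $\alpha$. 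Next, assume $\sigma^+_{\alpha-\epsilon}$ is infinite for every $\epsilon$. Then, since $L C$ has rank at most $m$, a perturbation argument in the spirit of Theorem 8.1.3 of \cite{curtain2020introduction} applies: a finite-rank perturbation cannot move infinitely many eigenvalues, so the unmoved eigenvectors produce a contradiction. The same computation then yields an eigenvector with $C\phi_j\neq0$ failing only finitely often. Choosing $\epsilon$ small enough, using the fact that the eigenvalues have finitely many limit points, completes the necessity direction.
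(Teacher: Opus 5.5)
The paper states this theorem without proof, so your argument has to stand on its own. Neither direction can be completed, because the statement as written fails both ways.

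\emph{Sufficiency.} The obstacle you flag is fatal, not technical. Let $Z=\ell^2$ with orthonormal eigenbasis $\{\phi_j\}_{j\ge 0}$, and set $A\phi_0=\phi_0$, $A\phi_j=(2+1/j)\phi_j$ for $j\ge 1$, $Ez=\ip{z}{\phi_0}$, and $Cz=\ip{z}{\phi_0}+\sum_{j\ge 1}j^{-1}\ip{z}{\phi_j}$. For every $\alpha<0$ and $\epsilon>0$ you have $\sigma^+_{\alpha-\epsilon}(A)=\{1\}$ and $C\phi_0\neq 0$, so the stated condition holds. Now suppose some $L$ gave $\|E\check T^t\|\le c\,\mr{e}^{\alpha t}$, and set $G(t)=E\check T^tL$. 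Apply $E$ to $T^tz=\check T^tz+\int_0^t\check T^{t-s}LCT^sz\,\xD{s}$ with $z=\phi_j$, multiply by $\mr{e}^{-\lambda_jt}$, and let $t\to\infty$. This yields
\[
E\phi_j=\hat G(\lambda_j)\,C\phi_j\quad\text{for every $j$ with }\Re\lambda_j>\alpha,
\]
where $\hat G$ is the Laplace transform of $G$, analytic on $\{\Re s>\alpha\}$. In this example it forces $\hat G(2+1/j)=0$ for all $j\ge 1$, so $\hat G\equiv 0$ by the identity theorem, so $E\phi_0=0$, a contradiction. Thus undamped modes in $\ker E$ that $C$ sees destroy $\ms{KED}$, exactly through your convolution term, and no choice of $L$ avoids this.

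Your block-triangular construction does go through under an extra hypothesis: only finitely many $j$ with $\Re\lambda_j\ge\alpha-\epsilon$ have $C\phi_j\neq 0$. This holds, e.g., if $\ran C^*\subseteq(\ker E)^\perp$, since then $C\phi_j=0$ whenever $E\phi_j=0$. Put all these modes, together with $\sigma^+_{\alpha-\epsilon}(A)$, into $Z_N$. Then Hautus still applies, every mode outside $Z_N$ seen by $E$ or by $C$ decays like $\mr{e}^{(\alpha-\epsilon)t}$ by the Riesz bounds, and the convolution term is $O(t\,\mr{e}^{(\alpha-\epsilon)t})$.

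\emph{Necessity.} Your eigenvector argument is correct, but it only gives a contradiction when $\Re\lambda_j>\alpha$. The paper's $\ms{KED}$ uses the non-strict bound $\|E\check T^t\|\le c\,\mr{e}^{\alpha t}$, so a mode with $\alpha-\epsilon\le\Re\lambda_j\le\alpha$, $E\phi_j\neq 0$ and $C\phi_j=0$ is compatible with it (take $Z=\mR$, $A=\alpha$, $C=0$, $E=1$). No limiting argument recovers the $\alpha-\epsilon$ version; that needs the strict convention (growth bound $<\alpha$).

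The finiteness step fails outright. The principle you invoke is false: a rank-one perturbation generically moves every eigenvalue. The correct mechanism behind the classical result is that $\check T^t-T^t$ is compact, which pins down the essential spectrum of the whole semigroup $\check T^t$. $\ms{KED}$ only controls $E\check T^t$, which carries no such information, and finiteness is in fact not necessary. Take orthonormal $\{\phi_j\}_{j\ge 1}$ with $A\phi_j=(1+1/j)\phi_j$, $Cz=\sum_j j^{-1}\ip{z}{\phi_j}$ and $Ez=\sum_j(11j+1)^{-1}\ip{z}{\phi_j}$. One checks $EA=-10E+C$. Any $L$ with $EL=1$ (e.g.\ $L=12\phi_1$) then gives $E(A-LC)=-10E$, hence $E\check T^t=\mr{e}^{-10t}E$. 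So the system is $\ms{KED}$ at every rate $\alpha\in[-10,0)$, while $\sigma^+_{\alpha-\epsilon}(A)$ is infinite for every $\epsilon>0$. The interpolation constraints $E\phi_j=\hat G(\lambda_j)C\phi_j$ above are the kind of necessary condition that $\ms{KED}$ actually imposes.
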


\section{Koopman--Luenberger Observer Synthesis}\label{sec:synthesis}
Now we address the computational problem of synthesizing the observer, assuming that the nonlinear system \eqref{eq:sys} can be represented as \eqref{eq:ACE} through a RKHS formulation and that the resulting system triplet $(A,C,E)$ possesses the $\ms{KED}$ property. Our need is to find a nonnegative operator $P$ as well as the observer gain $L$ such that the Lyapunov inequality holds. 
We first rewrite the inequality \eqref{eq:oALE.observer} as
\begin{equation}\label{eq:oALE.main}
    \ip{z}{(PA + A^\ast P - YC - C^\ast Y^\ast + kE^\ast E)z} \leq 0, \enspace  \forall z\in \mc{D}(A), 
\end{equation}
where $Y:=PL$. The introduction of such a product variable is a standard trick in formulating the controller/observer synthesis problems for linear systems via linear matrix inequalities; see Caverly and Forbes \cite{caverly2019lmi}. 
The parameter $k>0$ in \eqref{eq:oALE.observer} is user-specified to indicate a desired rate of observation error decay, with which the nonnegative operator $P$ provides a certificate for the decrease of state observation error. Hence, larger $k$ corresponds to more aggressive observer. 
To ensure that $P$ is positive definite on $Z_1$ (which is $d$-dimensional), we impose a partial positive definiteness constraint: 
\begin{equation}\label{eq:P.psd}
    P|_{Z_1} \succeq I. 
\end{equation} 

\subsection{Compactified and Empirical Lyapunov Inequality}\label{subsec:compactified}
\par To obtain $P$ and $Y=PL$ from \eqref{eq:oALE.main}, we need to rely on some data sample to approximate the generator $A$ as a bounded operator and numerically solve the inequality as a matrix inequality. 
Since a bounded, finite-rank operator is in nature different from the infinitesimal generator $A$ as an unbounded operator on a Hilbert space, the approximation error must be characterized carefully. 
Here, we adopt the idea of resolvent-type EDMD (RT-EDMD), proposed in Meng et al. \cite{meng2026resolvent}, to estimate the semigroup generator from data. 
\begin{definition}
    The \emph{resolvent} of the infinitesimal generator $A$ of the semigroup $\{T^t\}_{t\in \mR_+}$ is $R_\lambda=(\lambda I-A)^{-1}\in \mc{B}(Z)$ (defined for $\lambda\in \mC$ and $\lambda \notin \sigma(A)$). The \emph{Yosida approximation} of $A$ is $A_\lambda = \lambda^2 R_\lambda - \lambda I \in \mc{B}(Z)$, defined for sufficiently large $\lambda>0$. 
    Since $R_\lambda$ for large $\lambda>0$ is given by the Laplace transform: $R_\lambda = \int_0^\infty \mr{e}^{-\lambda t}T^t\xD{t}$, we refer to 
    \begin{equation}\label{eq:Yosida}
        A_{\lambda, \tau} = \lambda^2 \int_0^\tau \mr{e}^{-\lambda t}T^t\xD{t} - \lambda I
    \end{equation} 
    as the \emph{finite-time Yosida approximation}. 
\end{definition}
In \cite{meng2026resolvent}, the following error bounds for finite-time Yosida approximation was established for the Koopman semigroup defined on the Banach space $C(\mbb{X})$. 
The same conclusion is generalizable to the RKHS formulation through essentially the same proofs, cf. Th. 3.1 and Proposition 4.2 of \cite{meng2026resolvent}. 
\begin{lemma}\label{lem:Yosida}
    Suppose that $\|T^t\|\lesssim \mr{e}^{\omega t}$. For all $z\in \mc{D}(A^2)$, we have 
    \begin{equation*}
        \|(A_{\lambda, \tau}-A)z\|\lesssim \lambda^{-1}(\|z\|+\|Az\|) + \lambda \mr{e}^{-\lambda \tau}\|z\| \lesssim (\lambda^{-1}+\tau^{-1})(\|\eta\| + \|A\eta\|)
    \end{equation*}
    and hence 
    \begin{equation}
    \|T^t\eta - \exp(tA_{\lambda,\tau})\eta\| \lesssim (\lambda^{-1}+\tau^{-1})(\|\eta\| + \|A\eta\|)t \mr{e}^{\omega t}.
    \end{equation} 
\end{lemma}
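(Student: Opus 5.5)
We prove the first estimate by splitting $A_{\lambda,\tau}-A$ into the classical Yosida error plus a truncation tail. Write
\begin{equation*}
A_{\lambda,\tau} - A = (A_\lambda - A) - \lambda^2\int_\tau^\infty \mr{e}^{-\lambda t}T^t\xD{t},
\end{equation*}
which is valid for $\lambda>\omega$, where the Laplace integral converges and equals $R_\lambda$.

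\emph{Yosida term.} For $z\in\mc{D}(A)$ we have $\lambda R_\lambda z - z = R_\lambda Az$, so
\begin{equation*}
A_\lambda z = \lambda(\lambda R_\lambda - I)z = \lambda R_\lambda A z,
\end{equation*}
and therefore
\begin{equation*}
A_\lambda z - Az = (\lambda R_\lambda - I)Az = R_\lambda A^2 z .
\end{equation*}
The Hille--Yosida-type bound $\|R_\lambda\|\lesssim (\lambda-\omega)^{-1}\lesssim \lambda^{-1}$ then gives $\|(A_\lambda - A)z\|\lesssim \lambda^{-1}\|A^2z\|$ for $z\in\mc{D}(A^2)$. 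To reach the stated form $\lambda^{-1}(\|z\|+\|Az\|)$, I would follow the cited proof, which reads $\|Az\|$ in a graph-type norm on $\mc{D}(A^2)$. The implied constant absorbs the dependence on the higher graph norm, or one uses $\lambda R_\lambda Az - Az\to 0$ with rate controlled on $\mc{D}(A^2)$.

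\emph{Tail term.} Directly,
\begin{equation*}
\Bigl\|\lambda^2\int_\tau^\infty \mr{e}^{-\lambda t}T^t z\,\xD{t}\Bigr\| \lesssim \lambda^2\,\frac{\mr{e}^{-(\lambda-\omega)\tau}}{\lambda-\omega}\,\|z\| \lesssim \lambda\, \mr{e}^{-\lambda\tau}\|z\|
\end{equation*}
for $\lambda$ bounded away from $\omega$, absorbing $\mr{e}^{\omega\tau}$ into the constant on the relevant horizon. For the second inequality in the first display, note that $\lambda \mr{e}^{-\lambda\tau}\le (\mr{e}\tau)^{-1}$, since $s\mr{e}^{-s}\le \mr{e}^{-1}$ with $s=\lambda\tau$. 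Hence $\lambda\mr{e}^{-\lambda\tau}\lesssim\tau^{-1}$, and the two contributions combine into $(\lambda^{-1}+\tau^{-1})(\|z\|+\|Az\|)$.

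\emph{Semigroup estimate.} The bounded operator $A_{\lambda,\tau}$ generates $\exp(tA_{\lambda,\tau})$, and $\exp(sA_{\lambda,\tau})$ commutes with $A_{\lambda,\tau}$. Differentiating $s\mapsto \exp((t-s)A_{\lambda,\tau})T^s\eta$ for $\eta\in\mc{D}(A)$ and integrating over $[0,t]$ gives the variation-of-constants formula
\begin{equation*}
T^t\eta - \exp(tA_{\lambda,\tau})\eta = \int_0^t \exp\bigl((t-s)A_{\lambda,\tau}\bigr)(A - A_{\lambda,\tau})T^s\eta\,\xD{s}.
\end{equation*}
We then need two ingredients:
\begin{enumerate}[label=(\alph*)]
\item a uniform growth bound $\|\exp(tA_{\lambda,\tau})\|\lesssim \mr{e}^{\omega' t}$;
\item the first estimate applied to $T^s\eta$, using that $T^s$ commutes with $A$, so that $\|T^s\eta\|+\|AT^s\eta\|\lesssim \mr{e}^{\omega s}(\|\eta\|+\|A\eta\|)$. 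The $\mc{D}(A^2)$ requirement is handled by density.
\end{enumerate}
Integrating over $[0,t]$ then produces the factor $t\,\mr{e}^{\omega t}$.

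\emph{Main obstacle.} The hard step is (a). Write
\begin{equation*}
\exp(tA_{\lambda,\tau}) = \mr{e}^{-\lambda t}\exp\Bigl(t\lambda^2\int_0^\tau \mr{e}^{-\lambda s}T^s\xD{s}\Bigr).
\end{equation*}
The crude bound $\|\lambda^2\int_0^\tau \mr{e}^{-\lambda s}T^s\xD{s}\|\le \lambda^2/(\lambda-\omega)$ gives growth rate $-\lambda+\lambda^2/(\lambda-\omega) = \lambda\omega/(\lambda-\omega)$, which is $\omega+O(\lambda^{-1})$. This yields $\mr{e}^{\omega' t}$ with $\omega'\to\omega$, matching the claimed $\mr{e}^{\omega t}$ up to constants that are uniform in $\lambda$ large. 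Everything else is routine.
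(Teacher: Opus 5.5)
The paper gives no argument of its own; it defers to Th.~3.1 and Prop.~4.2 of Meng et al. Your decomposition (classical Yosida error plus truncation tail, then a Duhamel formula) is the natural route. However, step (a) fails as written. The hypothesis is $\|T^t\|\le M\mr{e}^{\omega t}$ for some $M\ge 1$, and nothing in the setting gives $M=1$. Your crude bound silently sets $M=1$. With $Q=\int_0^\tau \mr{e}^{-\lambda s}T^s\,\xD{s}$ one only has $\|\lambda^2 Q\|\le M\lambda^2/(\lambda-\omega)$. Then $\mr{e}^{-\lambda t}\mr{e}^{t\|\lambda^2Q\|}$ has exponent $-\lambda+M\lambda^2/(\lambda-\omega)\approx (M-1)\lambda$, which blows up as $\lambda\to\infty$ and swamps the $\lambda^{-1}$ gain.

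The missing idea is to bound powers of $Q$ through the semigroup property rather than by $\|Q\|^n$. Since $Q^n=\int_{[0,\tau]^n}\mr{e}^{-\lambda(s_1+\cdots+s_n)}T^{s_1+\cdots+s_n}\,\xD{s}$, you get $\|Q^n\|\le M(\lambda-\omega)^{-n}$. Summing the exponential series then gives $\|\exp(tA_{\lambda,\tau})\|\le M\mr{e}^{\omega_\lambda t}$ with $\omega_\lambda=\lambda\omega/(\lambda-\omega)$, exactly as in the proof of the Hille--Yosida theorem. Equivalently, you can renorm by $\sup_{s\ge0}\mr{e}^{-\omega s}\|T^s z\|$ and pay $M$ once at the end.

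Even after this fix, your last sentence is wrong. Since $\omega_\lambda=\omega+\omega^2/(\lambda-\omega)>\omega$ whenever $\omega\neq 0$, the bound $\mr{e}^{\omega_\lambda t}\lesssim \mr{e}^{\omega t}$ is not uniform in $t$. Take the scalar case $T^t=\mr{e}^{\omega t}$ with $\tau$ large enough that the number $a=A_{\lambda,\tau}$ exceeds $\omega$. The error is then $\mr{e}^{\omega t}(\mr{e}^{(a-\omega)t}-1)|\eta|$, which is not $O(t\,\mr{e}^{\omega t})$. What the argument actually delivers is the bound with $t\,\mr{e}^{\omega_\lambda t}$, or $t\,\mr{e}^{\omega' t}$ for any $\omega'>\omega$ once $\lambda$ is large. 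This still suffices for the later use with $\omega<0$.

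The Yosida term has a second, independent gap. Your identity $(A_\lambda-A)z=R_\lambda A^2 z$ correctly gives $\lambda^{-1}\|A^2 z\|$. This cannot be traded for $\lambda^{-1}(\|z\|+\|Az\|)$ with a $z$-independent constant. Let $A$ be self-adjoint and nonpositive with a unit eigenvector $A\phi=-\lambda\phi$, for instance eigenvalues $-1,-2,\dots$ with $\lambda$ an integer. Then $(A_{\lambda,\tau}-A)\phi=\frac{\lambda}{2}(1-\mr{e}^{-2\lambda\tau})\phi$. But $\lambda^{-1}(\|\phi\|+\|A\phi\|)+\lambda\mr{e}^{-\lambda\tau}\|\phi\|$ stays bounded for, say, $\tau\ge 1$.

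So ``absorbing the higher graph norm into the constant'' makes the constant depend on $z$. Your density step in (b) then collapses, because a rate that genuinely involves $\|A^2\eta\|$ does not extend from $\mc{D}(A^2)$ to $\mc{D}(A)$. The correct bookkeeping carries $\|A^2T^s\eta\|=\|T^sA^2\eta\|\le M\mr{e}^{\omega s}\|A^2\eta\|$ through the Duhamel integral. For $\eta\in\mc{D}(A^2)$ this yields an estimate of the form $(\lambda^{-1}\|A^2\eta\|+\tau^{-1}\|\eta\|)\,t\,\mr{e}^{\omega_\lambda t}$.

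Finally, in the tail term, $\mr{e}^{\omega\tau}$ cannot be absorbed uniformly in $\tau$ when $\omega>0$. Take $\lambda\ge 2\omega$ instead, so that $\lambda^2\mr{e}^{-(\lambda-\omega)\tau}/(\lambda-\omega)\le 2\lambda\mr{e}^{-\lambda\tau/2}\lesssim\tau^{-1}$.
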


Thus, the operator Lyapunov inequality \eqref{eq:oALE.main} is approximated as 
\begin{equation*}
    \ip{z}{(PA_{\lambda,\tau} + A_{\lambda,\tau}^\ast P - YC - C^\ast Y^\ast + kE^\ast E)z} = 0, \enspace \forall z\in \mc{D}(A).    
\end{equation*}
Now all operators are bounded. Hence, the Yosida-approximated Lyapunov inequality is:
\begin{equation}\label{eq:oALE.Yosida}
    PA_{\lambda,\tau} + A_{\lambda,\tau}^\ast P - YC - C^\ast Y^\ast + kE^\ast E \preceq 0. 
\end{equation}
In view of \eqref{eq:Yosida}, a data-based approximation of the bounded operator $A_{\lambda, \tau}$ is needed. This approximation problems can be formulated as a regression on a finite number ($n$) of trajectories, each of a duration of $\tau$, starting from independently collected initial points $\{x_i\}_{i=1}^n$ from a probability measure $\nu$ supported on $\mbb{X}$, i.e., the available data is $\{x_i(t) = S^t(x_i): t\in [0, \tau], i\in \{1, \cdots, n\}\}$, $\{x_i\}_{i=1}^n \overset{\text{i.i.d.}}{\sim} \nu$. 
For any $x\in \mbb X$, denote $\rg\kappa_x = \rg\kappa(x,\cdot)$ and
\begin{equation*}A_{\lambda, \tau}\rg\kappa_x =: \rg\kappa'_x = \lambda^2 \int_0^\tau \mr{e}^{-\lambda t} \rg\kappa_{x(t)} \xD{t} - \lambda \rg\kappa_{x(0)} \in Z.\end{equation*}
Since $A_{\lambda,\tau}$ is not compact while any data-based approximation $\hat{A}_{\lambda,\tau}$ must be finite-rank, $\hat{A}_{\lambda,\tau}$ cannot converge to $A_{\lambda, \tau}$ even in a strong (pointwise) sense.\footnote{
    An idea in \cite{kostic2022learning, kostic2023sharp} is to consider the operator as from $Z$ to $L^2(\mbb{X})$ instead, namely to compose the operator with the compact inclusion map from $Z$ to $L^2(\mbb{X})$, which is then a compact operator. The estimate must use a reduced rank formulation and its error analysis involves the spectrum of the $R_{j,\tau}$ below. 
    However, in general, solving control-theoretic problems in a Koopman-based manner does not necessarily need an explicit estimate of the Koopman operator/generator. 
}

Now we note that 
\begin{equation*} A_{\lambda,\tau} = \Bra{\frac{1}{\nu(\mbb{X})} \int_{\mbb{X}} \rg\kappa'_x\times \rg\kappa_x \xD{\nu(x)}} \Bra{ \frac{1}{\nu(\mbb{X})}  \int_{\mbb{X}} \rg\kappa_x\times \rg\kappa_x \xD{\nu(x)}} ^{-1} .\end{equation*}
Here by the outer product form $z_1\times z_2$ we refer to a rank-$1$ operator on $Z$, specified by $(z_1\times z_2)z = \ip{z_2}{z}z_1$. 
\begin{definition}
    We refer to 
    \begin{equation*}R_{\lambda,\tau} = \frac{1}{\nu(\mbb{X})} \int_{\mbb{X}} \rg\kappa'_x\times \rg\kappa_x \xD{\nu(x)} \text{ and } S = \frac{1}{\nu(\mbb{X})} \int_{\mbb{X}} \rg\kappa_x\times \rg\kappa_x \xD{\nu(x)},\end{equation*}
    namely the operators that map any given $g\in Z=\mc{H}_{\rg\kappa}(\mbb{X})$ to 
    \begin{equation*}
        R_{\lambda,\tau}g = \frac{1}{\nu(\mbb{X})}\int_{\mbb{X}} g(x) \rg\kappa'_x \xD{\nu(x)} \text{ and } Sg = \frac{1}{\nu(\mbb{X})}\int_{\mbb{X}} g(x)\rg\kappa_x \xD{\nu(x)}, 
    \end{equation*}
    respectively, as the \emph{$\rg\kappa'$-integral operator} and \emph{$\rg\kappa$-integral operator}, respectively. 
    Their sample average approximations, namely the empirical operators, are respectively denoted as 
    \begin{equation*}
        \hat{R}_{\lambda,\tau} = \frac{1}{n}\sum_{i=1}^n \rg\kappa'_{x_i} \times \rg\kappa_{x_i} \text{ and } \hat{S} = \frac{1}{n}\sum_{i=1}^n \rg\kappa_{x_i} \times \rg\kappa_{x_i}.
    \end{equation*} 
\end{definition}

Now $R_{\lambda,\tau}$ and $S$ are both compact operators, and $\hat{R}_{\lambda,\tau}$ and $\hat{S}$ are ``good'' finite-rank approximations that converges to $R_{\lambda,\tau}$ and $S$ in operator norm (i.e., uniformly) at the large data limit (see next subsection). 
Although $A_{\lambda, \tau} = R_{\lambda,\tau}S^{-1}$ is non-compact and cannot be hoped to be well approximated by replacing with an empirical expression, we can transform the Yosida-approximated Lyapunov inequality \eqref{eq:oALE.Yosida} into the following equivalent form, by left-multiplying $S$ and right-multiplying $S$ (and noting that $S$ is self-adjoint):
\begin{equation}\label{eq:oALE.compact}
    SPR_{\lambda,\tau} + R_{\lambda,\tau}^\ast PS - SYCS - SC^*Y^*S + kSE^*ES \preceq 0.
\end{equation}
The above inequality contains all its left-hand side terms as compact operators, and hence is said to be \emph{compactified}. 

With empirically approximated integral operators $\hat{R}_{\lambda,\tau}$ and $\hat{S}$ of a finite rank, we are interested in solving the following \emph{empirical} version of the operator Lyapunov inequality:
\begin{equation}\label{eq:oALE.empirical}
    \hat S \hat{P} \hat R_{\lambda,\tau} + \hat R_{\lambda,\tau}^\ast \hat P\hat S - \hat S \hat YC \hat S - \hat S C^*\hat Y^* \hat S + k\hat S E^*E \hat S \preceq 0. 
\end{equation}
Its solution $(\hat{P}, \hat{Y})$ therefore must be restrictable onto some finite-dimensional subspace. 
That is, if $(\hat{P}, \hat{Y})$ is a solution pair to \eqref{eq:oALE.empirical}, then the projection of $\hat{P}$ onto $\mr{span}\{\rg\kappa_{x_i}\times \rg\kappa_{x_j}\}_{i,j=1}^n$ and the projection of $\hat{Y}$ onto $\mr{span}\{e_i \times \ms{e}_j \}_{i,j=1}^n$, must form a solution too. 
It then suffices to solve \eqref{eq:oALE.empirical} in their matrix representations, which is to be covered in \Cref{subsec:elementary}. 
With this solution, by letting $\hat{L} = \hat{P}^{-1}\hat{Y}$, an observer in the Yosida-approximate form is available:
\begin{equation}\label{eq:observer.Yosida}
    \dot{\hat z}_{\lambda,\tau} = A_{\lambda,\tau} \hat{z}_{\lambda,\tau} + \hat{L}(y-C\hat{z}_{\lambda,\tau}), \enspace \hat x_{\lambda,\tau} = E\hat{z}_{\lambda,\tau}.
\end{equation}
Indeed, although the ideal Koopman--Luenberger observer in the form of \eqref{eq:observer} should use the exact $A$ as in the model \eqref{eq:ACE}, because $A$ is not computable, it can be only approximated as a finite-time Yosida approximation $A_{\lambda, \tau}$.

\subsection{Learning Error and Its Influence on State Observation}\label{subsec:learning.error}
The replacement of $S$ by $\hat S$ introduces a learning error. Since $\hat S$ and $S$ are compact, the learning error is characterized as a probabilistic bound in operator norm. The following conclusion is known from Kostic et al. \cite{kostic2022learning}. 
\begin{lemma}
    We have, with probability at least $1-\delta$ over the random sampling of $\{x_i\}_{i=1}^n \overset{\text{i.i.d.}}{\sim} \nu$, that
    \begin{equation*} \frac{\|\hat{R}_{\lambda,\tau} - R_{\lambda,\tau}\|}{\|A_{\lambda,\tau}\|}, \, \|\hat{S}-S\| \lesssim \frac{1}{n}\log\frac{8n^2}{\delta} + \sqrt{\frac{1}{n}\log\frac{8n^2}{\delta}}.\end{equation*}
\end{lemma}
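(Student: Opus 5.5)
The plan is to treat both estimates as instances of a Hilbert-space concentration inequality for i.i.d. sums of random rank-one operators. The statement has the Bernstein shape: a $1/n$ term plus a $\sqrt{1/n}$ term, with a logarithmic confidence factor. We work in the Hilbert space $\mc{HS}(Z)$ of Hilbert--Schmidt operators on $Z$, in which the operator norm is dominated by the Hilbert--Schmidt norm.

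\textbf{Bound on $\|\hat S - S\|$.} Define $\xi_i = \rg\kappa_{x_i}\times\rg\kappa_{x_i}$, so $\hat S = \frac1n\sum_{i=1}^n \xi_i$ and $\mathbb{E}\xi_i = S$ (taking $\nu$ to be a probability measure, or absorbing the normalization $\nu(\mbb X)$).
\begin{itemize}
\item \emph{Boundedness.} Since $\mbb X$ is bounded, $\|\xi_i\|_{\mc{HS}} = \rg\kappa(x_i,x_i) = \rho(0)|x_i|^2 \le b$ uniformly, using \eqref{eq:isometric}.
\item \emph{Variance.} $\mathbb{E}\|\xi_i - S\|_{\mc{HS}}^2 \le b^2$.
\item \emph{Concentration.} The Pinelis--Sakhanenko/Bernstein inequality for bounded i.i.d. variables in a separable Hilbert space gives, with probability at least $1-\delta/2$,
\begin{equation*}
\|\hat S - S\| \le \frac{2b}{n}\log\frac{4}{\delta} + \sqrt{\frac{2b^2}{n}\log\frac{4}{\delta}}.
\end{equation*}
This is dominated by the stated rate, because $\log(4/\delta)\le\log(8n^2/\delta)$.
\end{itemize}

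\textbf{Bound on $\|\hat R_{\lambda,\tau} - R_{\lambda,\tau}\|$.} Define $\zeta_i = \rg\kappa'_{x_i}\times\rg\kappa_{x_i}$, with $\mathbb{E}\zeta_i = R_{\lambda,\tau}$.
\begin{itemize}
\item \emph{Almost-sure bound.} Write $\rg\kappa'_x = A_{\lambda,\tau}\rg\kappa_x$. Then
\begin{equation*}
\|\zeta_i\|_{\mc{HS}} = \|A_{\lambda,\tau}\rg\kappa_{x_i}\|\,\|\rg\kappa_{x_i}\| \le \|A_{\lambda,\tau}\|\, b,
\end{equation*}
and the second moment is bounded by $\|A_{\lambda,\tau}\|^2 b^2$.
\item \emph{Concentration.} The same Bernstein inequality, with failure probability $\delta/2$, yields the same rate multiplied by $\|A_{\lambda,\tau}\|$. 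This is why the statement is normalized by $\|A_{\lambda,\tau}\|$.
\item \emph{Boundedness of $A_{\lambda,\tau}$.} We need $\|A_{\lambda,\tau}\| <\infty$. From \eqref{eq:Yosida} and $\|T^t\|\lesssim \mr e^{\omega t}$ with $\lambda>\omega$,
\begin{equation*}
\|A_{\lambda,\tau}\| \le \lambda^2\int_0^\tau \mr e^{(\omega-\lambda)t}\,\xD t + \lambda.
\end{equation*}
\end{itemize}

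\textbf{Combining.} A union bound over the two events gives both estimates simultaneously with probability at least $1-\delta$.

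\textbf{Expected obstacle: matching the exact $\log(8n^2/\delta)$ factor.} In Kostic et al.\ this factor comes from a matrix/operator Bernstein inequality with intrinsic dimension, not from the Hilbert--Schmidt route. Plainly, $8n^2$ is an effective-dimension bound, e.g.\ $\mr{tr}(S)/\|S\|$ or the rank of an augmented sample operator, controlled polynomially in $n$. If the statement is read only up to the constants hidden in $\lesssim$, the Hilbert--Schmidt argument already implies it, since its $\log(4/\delta)$ is smaller.

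If instead the operator-norm form with intrinsic dimension is wanted, I would proceed as follows:
\begin{itemize}
\item For $S$, apply Minsker's operator Bernstein inequality to the self-adjoint $\xi_i - S$, whose variance proxy is $\preceq bS$.
\item For $R_{\lambda,\tau}$, which is not self-adjoint, use the Hermitian dilation $\begin{bmatrix}0&\zeta_i\\ \zeta_i^*&0\end{bmatrix}$, then bound the effective dimension by a quantity polynomial in $n$.
\end{itemize}
I regard this effective-dimension bookkeeping as the only nonroutine step.
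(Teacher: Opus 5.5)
Your argument is correct, and it differs from the paper, which gives no proof at all. The paper simply imports the bound, including the $n$-dependent factor $8n^2$ inside the logarithm, from Kostic et al.

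You instead work in the separable Hilbert space of Hilbert--Schmidt operators on $Z$:
\begin{itemize}
\item On the bounded set $\mbb X$ you have $\|\rg\kappa_x\times\rg\kappa_x\|_{\mathrm{HS}}=\rg\kappa(x,x)=\rho(0)|x|^2\le b$.
\item The identity $\rg\kappa'_x=A_{\lambda,\tau}\rg\kappa_x$ gives $\|\rg\kappa'_x\times\rg\kappa_x\|_{\mathrm{HS}}\le\|A_{\lambda,\tau}\|\,b$.
\item A dimension-free Pinelis-type Bernstein inequality and a union bound then yield the rate with $\log(4/\delta)$.
\item This implies the stated rate, because $\log(4/\delta)\le\log(8n^2/\delta)$ and $u\mapsto u/n+\sqrt{u/n}$ is increasing.
\end{itemize}

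\emph{What your route buys:} a self-contained, elementary proof. It makes transparent why the $\hat R_{\lambda,\tau}$ error is normalized by $\|A_{\lambda,\tau}\|$, and it shows that the $n$ inside the logarithm is unnecessary.

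\emph{What it gives up:} the sharper variance proxy available from operator-norm Bernstein inequalities. For example, $\mathbb{E}(\xi_i-S)^2\preceq bS$ gives a leading term of order $\sqrt{b\|S\|\log(\cdot)/n}$ rather than your $\sqrt{b^2\log(\cdot)/n}$. That matters when $\|S\|\ll b$, but it costs a logarithmic dimension-type factor.

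Since the lemma hides all constants in $\lesssim$, your closing ``obstacle'' discussion about recovering $8n^2$ is not needed.

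Minor points, all absorbed into $\lesssim$:
\begin{itemize}
\item $\|A_{\lambda,\tau}\|<\infty$ holds for every $\lambda$ because $\tau<\infty$, so the condition $\lambda>\omega$ is superfluous.
\item Your bound on $\|A_{\lambda,\tau}\|$ drops the constant $c$ in $\|T^t\|\le c\,\mr e^{\omega t}$.
\item The centered summands satisfy only $\|\xi_i-S\|_{\mathrm{HS}}\le 2b$, not $b$.
\end{itemize}
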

Hence, given any $\ve>0$, there exists a corresponding $n_{\ve, \delta}$ such that when the sample size exceeds $n_{\ve,\delta}$, we have a confidence of $1-\delta$ that both operator errors in norm do not exceed $\epsilon$. As such, if the true system allows a solution to \eqref{eq:oALE.compact}, which we denote as $(P, Y)$, then this solution must solve the empirical version \eqref{eq:oALE.empirical} with the right-hand side relaxed by a multiple of $\ve (S+|R_{\lambda, \tau}|)$, by accounting for the operator norm errors in $\hat{R}_{\lambda,\tau}$ and $\hat{S}$.\footnote{
Here by $|R|$ we refer to the matrix obtained via the polar decomposition of $R$ as a bounded operator in the Hilbert space $Z$: $R = U|R|$, where $U$ is a partial isometry. One can take, $|R| = (R^*R)^{1/2}$, for example.}
Such a relaxed empirical equation is hence feasible. 
Conversely, any solution of the empirical equation \eqref{eq:oALE.empirical}, denoted as $(\hat{P}, \hat{Y})$, when plugged into the original formulation \eqref{eq:oALE.compact}, must also result in an error bounded by such a term of $\ve (\hat{S}+|\hat{R}_{\lambda, \tau}|)$. We thus reached at the following conclusion. 
\begin{corollary}\label{cor:oALE.compact.error}
    Suppose that \eqref{eq:oALE.compact} admits a solution $(P,Y)$. Then for any given $\ve>0$ sufficiently small and $\delta\in(0,1)$, there exists $n_{\ve, \delta}\in \mN$, such that when $n>n_{\ve, \delta}$, with probability at least $1-\delta$ over random sampling, if \eqref{eq:oALE.empirical} is feasible, then any solution $(\hat{P}, \hat{Y})$ satisfies:
    \begin{equation*}
        S \hat P R_{\lambda,\tau} + R_{\lambda,\tau}^* \hat P S - S\hat Y C S - SC^* \hat Y^* S + kSE^*ES  \preceq \ve\bra{1+\|A_{\lambda,\tau}\|}S. 
    \end{equation*} 
\end{corollary}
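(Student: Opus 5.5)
The approach is a perturbation argument that starts from the empirical inequality \eqref{eq:oALE.empirical} satisfied by $(\hat P,\hat Y)$ and replaces each empirical operator by its population counterpart. Write $\Delta_R=\hat R_{\lambda,\tau}-R_{\lambda,\tau}$ and $\Delta_S=\hat S-S$. By the preceding lemma, for $n>n_{\ve,\delta}$ and with probability at least $1-\delta$, we have $\|\Delta_S\|\le\ve'$ and $\|\Delta_R\|\le\ve'\|A_{\lambda,\tau}\|$, where $\ve'$ is a small multiple of $\ve$ to be fixed at the end. Expanding each product, for example
\begin{equation*}
    S\hat PR_{\lambda,\tau} = \hat S\hat P\hat R_{\lambda,\tau} - \Delta_S\hat P\hat R_{\lambda,\tau} - S\hat P\Delta_R,
\end{equation*}
and treating the $\hat Y C$ terms and the $kSE^*ES$ term in the same way, shows that the left-hand side of the claimed inequality equals the left-hand side of \eqref{eq:oALE.empirical}, which is $\preceq0$, plus a self-adjoint remainder. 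That remainder is a sum of symmetrized terms, each containing at least one factor $\Delta_S$ or $\Delta_R$.

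The next step is to bound each remainder term of the form $X^*Q\Delta+\Delta^*QX$ by a Young-type inequality of the form $\ve X^*X+\ve^{-1}\Delta^*Q^2\Delta$. A cleaner route is to note that every term has an outer factor $S$ or $\hat S$ on at least one side. For the $\hat S\hat P\hat R$-type terms, I would use $\hat R_{\lambda,\tau}=A_{\lambda,\tau}\hat S+O(\ve')$, which follows from $R_{\lambda,\tau}=A_{\lambda,\tau}S$ together with the two norm bounds. This lets every remainder be written as $S^{1/2}(\cdot)S^{1/2}$ up to higher-order terms, so it can be compared with $S$ itself, with operator norm at most a constant times $\ve'(1+\|A_{\lambda,\tau}\|)$. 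The constant depends on the sizes of $\hat P$, $\hat Y$, $C$, $E$ and $k$. To make it uniform, I would assume, or enforce through normalization in the SDP, that $\|\hat P\|$ and $\|\hat Y\|$ are bounded. The existence of the true solution $(P,Y)$ to \eqref{eq:oALE.compact} is what justifies such a bound, since any admissible scaling of the feasible set can be taken comparable to $(P,Y)$. Choosing $\ve'$ proportional to $\ve$ divided by this constant then gives the stated right-hand side $\ve(1+\|A_{\lambda,\tau}\|)S$.

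The main obstacle is the final comparison. Operator-norm smallness of the remainder only yields $\preceq c\,\ve I$, not $\preceq c\,\ve S$, and $S$ is compact, so it is not bounded below. To get a bound relative to $S$, I would factor the remainder as $S^{1/2}MS^{1/2}$ with $\|M\|\lesssim\ve'(1+\|A_{\lambda,\tau}\|)$. This requires controlling $\Delta_S$ and $\Delta_R$ in the $S$-weighted sense $\|S^{-1/2}\Delta S^{-1/2}\|$, i.e.\ a relative concentration bound, as in the Kostic et al.\ analysis. The alternative is to accept the weaker $\preceq\ve(\hat S+|\hat R_{\lambda,\tau}|)$ form mentioned in the text and then use $\hat S\preceq S+\ve' I$ and $|\hat R_{\lambda,\tau}|\lesssim\|A_{\lambda,\tau}\|\hat S$ to reach the stated bound up to higher-order terms. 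I would try the relative (weighted) concentration first, and fall back on the second route only if that fails.
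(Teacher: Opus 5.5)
Your overall strategy is the paper's: perturb \eqref{eq:oALE.empirical} using the operator-norm bounds of the concentration lemma. But the step you flag as the main obstacle is a genuine gap, and neither of your two ways of closing it works.

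\emph{The weighted bound is false.} The bound $\|S^{-1/2}(\hat S-S)S^{-1/2}\|\lesssim\ve'$ is not merely hard to prove. For $w=S^{1/2}u$ one has $\ip{w}{S^{-1/2}\hat S S^{-1/2}w}=\frac1n\sum_i u(x_i)^2$, while $\|w\|^2=\ip{u}{Su}\propto\|u\|_{L^2(\nu)}^2$. Since $Z$ contains narrow bumps (times some $e_k$) peaked at a sample point, this form is unbounded for non-atomic $\nu$. Relative concentration results of the Kostic et al.\ type only control regularized quantities such as $(S+\gamma I)^{-1/2}(\hat S-S)(S+\gamma I)^{-1/2}$, which compare with $S+\gamma I$, not with $S$.

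\emph{The fallback has the same hole as the paper's sketch.} Your fallback is exactly the sketch the paper gives before the corollary. First, the bound $\preceq\ve(\hat S+|\hat R_{\lambda,\tau}|)$ does not follow from norm errors: its right-hand side vanishes on $\ker\hat S=(\mr{span}\{\rg\kappa_{x_i}\}_{i=1}^n)^\perp$, where the left-hand side need not be nonpositive. Second, even granting it, $\hat S\preceq S+\ve' I$ leaves a term $\ve\ve' I$ that no multiple of the compact operator $S$ dominates, so these ``higher-order terms'' cannot be dropped. The Young-inequality variant fails for the same reason, since $\ve^{-1}\Delta^*Q^2\Delta$ is not dominated by $S$ either.

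\emph{The missing idea.} The relations you treat as approximate are exact. Since $\rg\kappa'_x=A_{\lambda,\tau}\rg\kappa_x$, one has $\hat R_{\lambda,\tau}=A_{\lambda,\tau}\hat S$ and $R_{\lambda,\tau}=A_{\lambda,\tau}S$. Set $M:=\hat PA_{\lambda,\tau}+A_{\lambda,\tau}^*\hat P-\hat YC-C^*\hat Y^*+kE^*E$.
\begin{itemize}
\item The hypothesis reads $\hat SM\hat S\preceq0$, i.e.\ $\hat\Delta M\hat\Delta\preceq0$, where $\hat\Delta$ is the orthogonal projection onto $\ran\hat S=\mr{span}\{\rg\kappa_{x_i}\}_{i=1}^n$.
\item The target $SMS\preceq cS$ follows from $S^{1/2}MS^{1/2}\preceq cI$.
\item Because $(I-\hat\Delta)\hat S=0$, one gets $\|(I-\hat\Delta)S^{1/2}\|^2=\|(I-\hat\Delta)(S-\hat S)(I-\hat\Delta)\|\le\|\hat S-S\|$.
\item Splitting $S^{1/2}=\hat\Delta S^{1/2}+(I-\hat\Delta)S^{1/2}$ then gives
\begin{equation*}
    SMS\preceq\|M\|\bra{2\|S\|^{1/2}\|\hat S-S\|^{1/2}+\|\hat S-S\|}S .
\end{equation*}
\end{itemize}
Choose $n_{\ve,\delta}$ so that $\|\hat S-S\|\le\ve^2$, and use $\|M\|\le2\|\hat P\|\|A_{\lambda,\tau}\|+2\|\hat Y\|\|C\|+k\|E\|^2$. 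This yields the stated form, up to a constant absorbed into $\ve$. It needs no relative concentration and no separate bound on $\hat R_{\lambda,\tau}-R_{\lambda,\tau}$.

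\emph{The norm bound on the solution cannot come from $(P,Y)$.} The argument above still requires an a priori bound on $\|\hat P\|+\|\hat Y\|$, and your justification of it through $(P,Y)$ does not work. If $(\hat P,\hat Y)$ solves \eqref{eq:oALE.empirical}, then so does $(c\hat P,c\hat Y)$ for every $c\ge1$, because the $k$-term is $\succeq0$. So does $(\hat P+W,\hat Y)$ for any $W\succeq0$ with $W=(I-\hat\Delta)W(I-\hat\Delta)$. Meanwhile the population left-hand side changes by terms that \eqref{eq:oALE.empirical} does not control: linear in $c$ in the first case, and $SWA_{\lambda,\tau}S+SA_{\lambda,\tau}^*WS$ in the second. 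Hence ``any solution'' must be restricted by an explicit hypothesis, such as a norm bound or a normalization imposed in the SDP. The existence of $(P,Y)$ plays no role in this direction.
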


To see the effect of the this learning error on the observation performance, we should analyze the error between the observer states of the Yosida-approximated observer, $\hat{z}_{\lambda,\tau}$, and the state $z_{\lambda,\tau}$ that evolves under the Yosida-approximated generator:
\begin{equation}\label{eq:ACE.Yosida}
    \dot{z}_{\lambda,\tau} = A_{\lambda,\tau} z_{\lambda,\tau}. 
\end{equation}
Clearly, their difference $\eta_{\lambda,\tau}(t) := \hat{z}_{\lambda,\tau}(t) - z_{\lambda,\tau}(t)$ evolves under the ``closed-loop'' operator $A_{\lambda,\tau}-\hat LC$. In order to establish a bound on the state observation error based on the evolution of $\eta_{\lambda,\tau}$, we introduce the following ``dissipativity by sampling'' condition. 
\begin{definition}\label{def:dissipative}
    An infinite-dimensional system $\dot{z}(t)=Az(t)$ on the RKHS with kernel $\rg\kappa$ is said to be \emph{dissipative by sampling}, if $SA + A^*S\preceq 0$, where $S = \frac{1}{\nu(\mbb{X})}\int_{\mbb{X}} \rg\kappa_x\times \rg\kappa_x \xD{\nu(x)}$ is the sampling operator. An infinite-dimensional system with input: $\dot{z}(t) = Az(t)+Bu(t)$, or simply called $(A,B)$, is said to be \emph{closed-loop dissipative by sampling under controller gain $K$}, if the system with generator $A-BK$ is dissipative by sampling. An infinite-dimensional system with output: $\dot{z}(t) = Az(t)$, $y(t) = Cz(t)$, or simply called $(A,C)$, is \emph{closed-loop dissipative by sampling under observer gain $L$}, if its dual system $(A^*, C^*)$ (as an infinite-dimensional system with input) is closed-loop dissipative by sampling under controller gain $L^*$. 
\end{definition}
\begin{remark}\label{rem:dissipativity}
    The semidefiniteness inequality on the dissipativity of sampling operator $S$ is a very natural condition that rules out possible failure of data-based estimation (learning) techniques on stability, control, and observation. For the system governed by the generator $A$, it is clear that the dissipative inequality $SA + A^*S\preceq 0$ guarantees that $z\mapsto \ip{z}{Sz}$ is a monotonically non-increasing function of time. 
    If the dissipativity holds on the dual system, i.e., if $SA^\ast + AS \preceq 0$, then for the system $\dot{z}(t)=A^*z(t)$, if the initial condition $z(0)\in \ran S$, then by writing $z(t)=S\zeta(t)$ and considering the evolution of $\zeta(t)$, we have $\xDD{}{t} \ip{\zeta(t)}{S\zeta(t)} \leq 0$ and hence $\ip{z(t)}{S^{-1}z(t)}$ remains bounded. 
\end{remark}

\begin{proposition}\label{th:observer.performance}
    Suppose that the system $(A_{\lambda,\tau}, C)$ is closed-loop dissipative by sampling under observer gain $\hat L$. 
    For any sufficiently small $\ve>0$, with $1-\delta$ probability over the independent sampling of more than $n_{\ve, \delta}$ data trajectories by probability measure $\nu$, the observer synthesized from $(\hat{P}, \hat{Y})$ guarantees bounded time-averaged $L^2$-error:
    \begin{equation*} 
        \sup_{\theta>0} \frac{1}{\theta}\int_0^\theta |E(\hat{z}_{\lambda,\tau}(t) - z_{\lambda,\tau}(t))|^2 \xD{t} \lesssim \frac{\ve}{k}\bra{1+\|A_{\lambda,\tau}\|}. 
    \end{equation*}
\end{proposition}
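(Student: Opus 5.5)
The plan is a Lyapunov-functional argument in which the learning error enters as a constant ``leakage'' term. Integrating the resulting dissipation inequality in time will give the time-averaged bound. Throughout, I write $\eta = \eta_{\lambda,\tau}$ and $\check A_{\lambda,\tau} = A_{\lambda,\tau}-\hat L C$, so that $\dot\eta = \check A_{\lambda,\tau}\eta$ with all operators bounded. The difficulty is that \Cref{cor:oALE.compact.error} certifies $\hat P$ only in the sandwiched form $S(\cdot)S$, i.e., on vectors of the form $S\zeta$. Hence the Lyapunov functional must be evaluated along a variable $\zeta$ with $\eta = S\zeta$, in the spirit of \Cref{rem:dissipativity}, rather than along $\eta$ directly.

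First, I rewrite the inequality of \Cref{cor:oALE.compact.error} using $\hat Y = \hat P\hat L$ and $R_{\lambda,\tau} = A_{\lambda,\tau}S$ (so that $R_{\lambda,\tau}^* = SA^*_{\lambda,\tau}$). This puts it in the form
\begin{equation*}
S\bra{\hat P\check A_{\lambda,\tau} + \check A_{\lambda,\tau}^*\hat P + kE^*E}S \preceq \ve(1+\|A_{\lambda,\tau}\|)S,
\end{equation*}
which holds with probability $1-\delta$ once $n>n_{\ve,\delta}$.

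Second, I restrict to initial errors $\eta(0)\in\ran S$. Both $\hat z_{\lambda,\tau}$ and $z_{\lambda,\tau}$ live in the span of kernel sections, and by density and boundedness of all operators this is no real loss. I then use the closed-loop dissipativity of the dual pair $(A_{\lambda,\tau}^*,C^*)$ under $\hat L^*$, i.e., $S\check A_{\lambda,\tau}^*+\check A_{\lambda,\tau}S\preceq 0$. The intent is that the trajectory remains representable as $\eta(t)=S\zeta(t)$ with $\ip{\zeta(t)}{S\zeta(t)}$ non-increasing, hence bounded by $\ip{\zeta(0)}{S\zeta(0)}$.

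Third, I take $v(t)=\ip{\eta(t)}{\hat P\eta(t)}$ and differentiate it. Substituting $\eta=S\zeta$ and applying the rewritten inequality gives
\begin{equation*}
\dot v \le -k|E\eta|^2 + \ve(1+\|A_{\lambda,\tau}\|)\ip{\zeta}{S\zeta}.
\end{equation*}
The last term is bounded by $\ve(1+\|A_{\lambda,\tau}\|)\,c_0$, where $c_0$ is the bounded $S$-weighted energy from the second step. Integrating over $[0,\theta]$ and using $v\ge 0$ yields
\begin{equation*}
k\int_0^\theta|E\eta|^2\,\xD{t} \le v(0) + \theta\,\ve(1+\|A_{\lambda,\tau}\|)c_0.
\end{equation*}
Dividing by $\theta$ gives the claimed $\frac{\ve}{k}(1+\|A_{\lambda,\tau}\|)$ behaviour, with $v(0)/\theta$ absorbed. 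This last point needs care: for the supremum over small $\theta$, I would bound $\frac1\theta\int_0^\theta|E\eta|^2$ directly by $\|E\|^2\sup_t\|\eta(t)\|^2$, using the boundedness from the second step, or else interpret the $\lesssim$ as allowing initial-condition-dependent constants.

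The main obstacle is the second step. The derivative of $v$ is naturally expressed through $\eta$, whereas the certificate only controls quadratic forms $\ip{\zeta}{S(\cdot)S\zeta}$. Justifying the identity $\dot\eta = S\dot\zeta$, together with boundedness of $\ip{\zeta}{S\zeta}$ in place of $\|\zeta\|^2$, requires the dual dissipativity hypothesis to be used exactly as in \Cref{rem:dissipativity}. It may also require handling $S^{-1}$ on $\ran S$ only. I would first try defining $\zeta(t)$ via the adjoint-type evolution $\dot\zeta = S^{-1}\check A_{\lambda,\tau}S\zeta$ on the finite-dimensional empirical span, where invertibility is harmless. I would then pass to the limit in $n$.
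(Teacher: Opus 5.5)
Your proposal is essentially the paper's proof. The paper uses the same $v=\ip{\eta}{\hat P\eta}$, reads \Cref{cor:oALE.compact.error} as $\dot v\le -k|E\eta|^2+\ve(1+\|A_{\lambda,\tau}\|)\ip{\eta}{S^{-1}\eta}$ (your $\ip{\zeta}{S\zeta}$), cites \Cref{rem:dissipativity} for boundedness of the last term, and integrates. The step you flag as the main obstacle is only asserted there, and your sketched patch does not fit. Neither $S$ nor $\check A_{\lambda,\tau}$ preserves the empirical span, so $\dot\zeta=S^{-1}\check A_{\lambda,\tau}S\zeta$ has no finite-dimensional setting. Also, $\hat P,\hat L$ are fixed by the sample, so there is no limit in $n$ to take. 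You need neither $\eta(t)\in\ran S$ nor $\dot\zeta$. With $U_t=\mr{e}^{t\check A_{\lambda,\tau}}$ (a bounded generator),
\begin{equation*}
\frac{\mr{d}}{\mr{d}t}\bigl(U_tSU_t^*\bigr)=U_t\bigl(\check A_{\lambda,\tau}S+S\check A_{\lambda,\tau}^*\bigr)U_t^*\preceq 0,
\end{equation*}
so $U_tSU_t^*\preceq S$. Douglas's range-inclusion lemma then gives $U_tS^{1/2}=S^{1/2}X_t$ with $\|X_t\|\le 1$. Hence $\ran S^{1/2}$ is invariant and $t\mapsto\|S^{-1/2}\eta(t)\|$ (pseudo-inverse) is nonincreasing. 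Separately, let $M=\hat P\check A_{\lambda,\tau}+\check A_{\lambda,\tau}^*\hat P+kE^*E$, which is bounded and self-adjoint, and $c=\ve(1+\|A_{\lambda,\tau}\|)$. Test the certificate $SMS\preceq cS$ on $w=S^{1/2}\zeta$ and extend by density of $\ran S^{1/2}$ in $(\ker S)^\perp$ (the operator vanishes on $\ker S$). This yields $S^{1/2}MS^{1/2}\preceq cI$, i.e., $\ip{\eta}{M\eta}\le c\|S^{-1/2}\eta\|^2$ on $\ran S^{1/2}$, which is exactly what the differential inequality for $v$ requires.

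Two other points need correcting, and the paper's proof passes over both. First, restricting $\eta(0)$ to $\ran S$ (or $\ran S^{1/2}$) is a genuine restriction, not a harmless density reduction. If $\eta_j(0)\to\eta(0)\notin\ran S^{1/2}$, weak compactness forces $\|S^{-1/2}\eta_j(0)\|\to\infty$, so your $c_0$ blows up. Moreover, $\ran S^{1/2}=\ran\iota^*$ for the inclusion $\iota:Z\to L^2(\nu)$. So a kernel section $\rg\kappa_{x_0}$ with $x_0\neq 0$ lies there only if evaluation at $x_0$ is $L^2(\nu)$-continuous on $Z$, which fails, e.g., when $\nu$ has a density. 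For the natural initial error $\hat z_{\lambda,\tau}(0)-\rg\kappa_{x(0)}$ one then has $c_0=\infty$, so $c_0<\infty$ must be an explicit hypothesis. Second, neither of your small-$\theta$ remedies gives a bound proportional to $\ve$, because $\frac{1}{\theta}\int_0^\theta|E\eta|^2\,\xD{t}\to|E\eta(0)|^2$ as $\theta\downarrow 0$. What the argument proves is $\frac{1}{\theta}\int_0^\theta|E\eta|^2\,\xD{t}\le\frac{v(0)}{k\theta}+\frac{c_0\ve}{k}(1+\|A_{\lambda,\tau}\|)$. That is the stated bound for $\limsup_{\theta\to\infty}$, not for $\sup_{\theta>0}$. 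The paper's final display, which places $v(0)$ outside the factor $1/\theta$, hides the same issue.
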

\begin{proof}
    As we write $\eta_{\lambda,\tau}(t) = \hat{z}_{\lambda,\tau}(t) - z_{\lambda,\tau}(t)$, clearly, we have $\dot{\eta}_{\lambda,\tau}(t) = (A_{\lambda, \tau}-\hat{L}C)\eta_{\lambda,\tau}(t) = \check{A}_{\lambda, \tau} \eta_{\lambda, \tau}(t)$. 
    By defining a Lyapunov functional on $\eta_{\lambda,\tau}$:
    \begin{equation*}
        v(t) = \ip{\eta_{\lambda,\tau}(t)}{\hat P\eta_{\lambda,\tau}(t)} = \ip{\check T_{\lambda, \tau}^t \eta_{\lambda,\tau}}{\hat P \check T_{\lambda, \tau}^t \eta_{\lambda,\tau}},
    \end{equation*}
    we have
    $\dot{v}(t)=\ip{\eta_{\lambda,\tau}(t)}{(\hat P\check A_{\lambda,\tau} + \check A_{\lambda,\tau}^*\hat P)\eta_{\lambda,\tau}(t)}.$
    As \Cref{cor:oALE.compact.error} reveals, we have
    \begin{equation*}
        \dot{v}(t) \leq -k\ip{\eta_{\lambda,\tau}(t)}{E^\ast E \eta_{\lambda,\tau}(t)} + \ve \bra{1+\|A_{\lambda,\tau}\|} \ip{\eta_{\lambda,\tau}(t)}{S^{-1}\eta_{\lambda,\tau}(t)}.
    \end{equation*}
    By the hypothesis of closed-loop dissipativity by sampling under observer gain $\hat L$, the discussions in \Cref{rem:dissipativity} shows that $\ip{\eta_{\lambda,\tau}(t)}{S^{-1}\eta_{\lambda,\tau}(t)}$ will remain bounded. Therefore, integrating the inequality above on $t\in [0, \theta]$ for any $\theta>0$ and taking the time average, we have 
    \begin{equation} 
        \frac{k}{\theta}\int_0^\theta |E\eta_{\lambda,\tau}(t)|^2 \xD{t} - v(0) + v(\theta) \leq \ve \bra{1+\|A_{\lambda,\tau}\|} \cdot \mr{const}. 
    \end{equation}
    The conclusion to be proved thus follows. 
\end{proof}

\subsection{Koopman--Luenberger Observer with a Data-Based Finite-Order Approximation}\label{subsec:performance}
The Yosida-approximate observer \eqref{eq:observer.Yosida} remains infinite-order and not implementable. A computable observer, therefore, must be one that uses the data sample; specifically, this observer should have an observer state that is forced to lie in the range of $\hat S$ (as a finite-dimensional subspace of $Z$). That is, the finite-order-approximate observer is written as 
\begin{equation}\label{eq:observer.finite-order}
    \hat{S} \dot{\hat \zeta} = \hat{\Delta}A_{\lambda,\tau}\hat{S} \hat{\zeta}+ \hat{L}(y - C\hat{S} \hat{\zeta}), \enspace \hat x= E\hat{S}\hat{\zeta}.
\end{equation}
Here $\hat\Delta$ is the projection operator from $Z$ onto $\ran \hat S$, and the term $A_{\lambda,\tau}\hat{S}$ is actually $\hat R_{\lambda,\tau}$. 
In \eqref{eq:observer.finite-order}, the observer state is $\hat{S}\hat{\zeta}(t)$, which belongs to $\ran \hat{S}$ and hence $\mr{span}\{\rg\kappa_{x_i}\}_{i=1}^n$. Therefore, the observer state $\hat\zeta(t)$ can in fact be represented by a vector $\ms{\xi}(t)\in \mR^n$ by $\hat\xi(t) = \sum_{i=1}^n \xi_i(t)\rg\kappa_{x_i}$. 

To bound the difference between $\hat{z}_{\lambda,\tau}$ and $\hat{S}\hat\zeta$, we note that the essence of such discrepancy is that in \eqref{eq:observer.finite-order}, the flow of the observer state is restricted onto $\ran S$. 
\begin{definition}
    In \Cref{def:dissipative}, if $S$ is replaced by $\hat S$, we say that the system is \emph{dissipative by empirical sampling}.
\end{definition}

\begin{proposition}
    Suppose that $\{\rg\kappa_{x_i}\}_{i=1}^n$ are linearly independent in $Z$, and in addition, $\ker \hat P \supseteq (\mr{span}\{\rg\kappa_{x_i}\}_{i=1}^n)^\perp$ (i.e., the solution $\hat{P}$ for \eqref{eq:oALE.empirical} can be chosen as a data-based finite-rank operator; this is indeed true --- see next subsection). 
    Moreover, suppose that $(A_{\lambda,\tau}, C)$ is closed-loop dissipative by empirical sampling under observer gain $\hat L$. Then 
    \begin{equation*}
        \int_0^\infty |E(\hat{z}_{\lambda,\tau}(t)-\hat S\hat\zeta(t))|^2 \xD{t} < \infty,
    \end{equation*}
    and in particular, $E(\hat{z}_{\lambda,\tau}(t)-\hat S\hat\zeta(t)) \to 0$ as $t\to \infty$. 
\end{proposition}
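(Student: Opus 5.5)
We need to bound the difference $\epsilon(t) := \hat z_{\lambda,\tau}(t) - \hat S\hat\zeta(t)$ between the infinite-order Yosida observer \eqref{eq:observer.Yosida} and the finite-order observer \eqref{eq:observer.finite-order}. The plan is to derive an evolution equation for $\epsilon$ and then run the same Lyapunov argument as in \Cref{th:observer.performance}, now with $\hat P$ and $\hat S$ in place of $S$.

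\emph{Step 1: the error equation.} Write $\hat w(t) := \hat S\hat\zeta(t) \in W := \mr{span}\{\rg\kappa_{x_i}\}_{i=1}^n$. Linear independence makes $\hat S$ invertible on $W$, so $\hat w$ is a well-defined finite-dimensional trajectory. By \eqref{eq:observer.finite-order} it satisfies
\begin{equation*}
\dot{\hat w} = \hat\Delta A_{\lambda,\tau}\hat w + \hat L(y - C\hat w).
\end{equation*}
Subtracting this from \eqref{eq:observer.Yosida} gives
\begin{equation*}
\dot\epsilon = \check A_{\lambda,\tau}\epsilon + (I-\hat\Delta)A_{\lambda,\tau}\hat w, \qquad \check A_{\lambda,\tau} := A_{\lambda,\tau} - \hat L C.
\end{equation*}
The forcing term lies in $W^\perp$. (If $\ran\hat L \not\subseteq W$, I would instead keep the $\hat L$ term inside $\hat\Delta$, which adds a similar $W^\perp$-valued forcing; the argument below is unchanged.)

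\emph{Step 2: the Lyapunov functional.} Take $v(t) = \ip{\epsilon}{\hat P\epsilon}$. Since $\ker\hat P \supseteq W^\perp$ and $\hat P$ is self-adjoint, $\hat P$ annihilates the forcing term, so
\begin{equation*}
\dot v = \ip{\epsilon}{(\hat P\check A_{\lambda,\tau} + \check A_{\lambda,\tau}^*\hat P)\epsilon}.
\end{equation*}
Because $\hat P$ is supported on $W = \ran\hat S$, the empirical inequality \eqref{eq:oALE.empirical} can be conjugated by the pseudo-inverse $\hat S^{-1}$ on $W$. This should yield $\hat P\check A_{\lambda,\tau} + \check A_{\lambda,\tau}^*\hat P \preceq -k\,\hat\Delta E^*E\hat\Delta$ on $W$, with no $\ve$-relaxation since we compare with the empirical, not the true, operators. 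Hence
\begin{equation*}
\dot v \le -k|E\hat\Delta\epsilon|^2 + (\text{cross terms involving } (I-\hat\Delta)\epsilon).
\end{equation*}

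\emph{Step 3: controlling the $W^\perp$ component.} This is where closed-loop dissipativity by empirical sampling enters. Following \Cref{rem:dissipativity} with $\hat S$ in place of $S$, the condition $\hat S\check A_{\lambda,\tau}^* + \check A_{\lambda,\tau}\hat S \preceq 0$ makes $\ip{\cdot}{\hat S^{-1}\cdot}$ non-increasing along $\check A_{\lambda,\tau}$-trajectories started in $\ran\hat S$. I would use this to show that the $W^\perp$ part of $\epsilon$ and the cross terms stay bounded, so that $|E(I-\hat\Delta)\epsilon|$ is dominated by a quantity whose time integral is finite.

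\emph{Step 4: conclusion.} Integrating $\dot v$ over $[0,\infty)$ with $v\ge0$ gives $\int_0^\infty |E\epsilon|^2\,\xD t<\infty$. Since $\dot\epsilon$ is bounded (all operators are bounded and $\hat w$ stays bounded), $|E\epsilon|^2$ is uniformly continuous, and Barbalat's lemma gives $E\epsilon(t)\to0$.

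The main obstacle is Step 3. The orthogonal-complement component $(I-\hat\Delta)\epsilon$ is driven by $(I-\hat\Delta)A_{\lambda,\tau}\hat w$ and is invisible to $\hat P$. Getting an integrable bound on $E$ applied to it requires a careful use of the empirical dissipativity hypothesis, which may have to be read as forcing the relevant trajectories to remain essentially in $\ran\hat S$. If that reading fails, the fallback is to note that $E = E\hat\Delta$ up to a component in $\ker E$ whenever $Z_1 \subseteq W$, and to choose the sample so that this inclusion holds approximately.
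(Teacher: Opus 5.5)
There is a genuine gap, and it is the step you flag yourself: Step 3 is a hope, not an argument. Showing that the $W^\perp$-component $(I-\hat\Delta)\epsilon$ and the cross term $2\ip{\hat P\hat\Delta\epsilon}{\check A_{\lambda,\tau}(I-\hat\Delta)\epsilon}$ in $\dot v$ stay \emph{bounded} gives nothing once you integrate over $[0,\infty)$. Your fallback $Z_1\subseteq W$ also fails on two counts. It is not implied by the hypotheses, since the $e_k$ are generally not finite combinations of the $\rg\kappa_{x_i}$. And it would not suffice, because the cross term in $\dot v$ would still be there. You also leave the initial condition free, whereas the paper assumes $\epsilon(0)\in\ran\hat S=W$. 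That assumption is needed: nothing in the hypotheses makes the compression $(I-\hat\Delta)\check A_{\lambda,\tau}(I-\hat\Delta)$ stable, so a $W^\perp$-component present at $t=0$ has no reason to decay.

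The missing idea is an invariance property. Since $\hat S\succeq 0$ has finite rank and $\ker\hat S=W^\perp$ (by linear independence), the empirical dissipativity hypothesis $M:=\hat S\check A_{\lambda,\tau}^*+\check A_{\lambda,\tau}\hat S\preceq 0$ forces $W$ to be \emph{invariant} under $\check A_{\lambda,\tau}$. Indeed, for $u\in W^\perp$ one has $\ip{u}{Mu}=0$, so $M\preceq 0$ gives $Mu=\hat S\check A_{\lambda,\tau}^*u=0$, i.e., $\check A_{\lambda,\tau}^*W^\perp\subseteq W^\perp$. You also have $\ran\hat L\subseteq W$: by construction $\hat L=\hat P^{-1}\hat Y$ with $\ran\hat Y\subseteq W$ and $\hat P$ inverted on $W$, and this is anyway required for \eqref{eq:observer.finite-order} to be consistent, since its left-hand side lies in $W$. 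Together these give $(I-\hat\Delta)A_{\lambda,\tau}\hat w=(I-\hat\Delta)\check A_{\lambda,\tau}\hat w=0$, so your forcing term vanishes identically. Hence $\epsilon$ solves $\dot\epsilon=\check A_{\lambda,\tau}\epsilon$ and stays in $W$ when $\epsilon(0)\in W$. The cross terms are then absent, your Step 2 gives $\dot v\le -k|E\epsilon|^2$, and integrating yields $\int_0^\infty|E\epsilon|^2\,\xD{t}\le v(0)/k$.

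This is the paper's route. It invokes \Cref{rem:dissipativity} to conclude that $\ip{\eta(t)}{\hat S^{-1}\eta(t)}$ stays bounded, hence $\eta(t)\in\ran\hat S$, and then applies the conjugated empirical inequality exactly as in your Step 2. The invariance computation above is what makes that step rigorous.

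For the final limit, the same dissipativity makes $\ip{\epsilon}{(\hat S|_W)^{-1}\epsilon}$ non-increasing, so $\epsilon$ is bounded and Barbalat's lemma applies. Your appeal to boundedness of $\hat w$ is unsupported, since $\hat w$ is driven by $y$, but it is not needed.
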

\begin{proof}
     Given that $\{\rg\kappa_{x_i}\}_{i=1}^n$ are linearly independent in $Z$, 
     \begin{equation*}
         \ran \hat{S} = \ran \bra{\frac{1}{n} \sum_{i=1}^n \rg\kappa_{x_i} \times \rg\kappa_{x_i}} \supseteq \mr{span}\{\rg\kappa_{x_i}\}_{i=1}^n. 
     \end{equation*} 
     Since $I-\hat\Delta$ is the projection operator onto $(\mr{span}\{\rg\kappa_{x_i}\}_{i=1}^n)^\perp$ and $(\mr{span}\{\rg\kappa_{x_i}\}_{i=1}^n)^\perp \subseteq \ker\hat{P}$ we have $\hat P(I-\hat\Delta) = 0$. Similarly, we ave $\hat{S}(I-\hat\Delta) = 0$. 
     Comparing \eqref{eq:observer.Yosida} and \eqref{eq:observer.finite-order}, we see that the error $\eta(t) = \hat{z}_{\lambda,\tau}(t)-\hat S\hat\zeta(t)$ satisfies the equation:
     \begin{equation*}
     \dot{\eta}(t) = (A_{\lambda,\tau}-\hat{L}C) \eta(t) + (I-\hat\Delta) (A_{\lambda,\tau}-\hat{L}C) \hat{S}\hat\zeta(t).
     \end{equation*}
     Suppose that $\eta(0)\in \ran\hat{S}$. Then by the closed-loop dissipativity hypothesis, from \Cref{rem:dissipativity}, we know that $\ip{\eta(t)}{\hat S^{-1} \eta(t)}$ remains bounded, i.e., $\eta(t)\in \ran \hat S^{1/2} = \ran \hat{S}$. 
     Consider the Lyapunov function $v(t) = \ip{\eta(t)}{\hat{P}\eta(t)}$. By differentiation, we obtain
     \begin{equation*}
     \begin{aligned}
        \dot{v}(t) =& \ip{\eta(t)}{\bra{ \hat{P}(A_{\lambda,\tau}-\hat{L}C) + (A_{\lambda,\tau}-\hat{L}C)^*\hat{P} }\eta(t)} \\
        &+ \ip{e}{\hat{P} (I-\hat\Delta) (A_{\lambda,\tau}-\hat{L}C) \hat{S}\hat\zeta(t)}, 
     \end{aligned}
     \end{equation*}
     where the second term on the right-hand side is zero, and the first term, due to the fact that $\eta(t)\in \ran \hat{S}$, is upper-bounded by $-k|E\eta(t)|^2$. Hence, 
     \begin{equation*}\int_0^\infty |E\eta(t)|^2 \xD{t} \leq \frac{1}{k}v(0) = \mr{const}.\end{equation*}
     The conclusion is proved. 
\end{proof}

Finally, we know that $z_{\lambda, \tau}(t)$ and $z(t)$ are governed by different infinitesimal generators: $A_{\lambda,\tau}$ and $A$. The bound on their error was given by \Cref{lem:Yosida}, which states that 
\begin{equation*}
    \|z_{\lambda,\tau}(t)-z(t)\|\lesssim (\lambda^{-1}+\tau^{-1})t\mr{e}^{\omega t}
\end{equation*}
where $\omega$ is the growth rate of the semigroup $\{T^t\}$ generated by $A$, as long as the initial condition $z(0)\in \mc D(A^2)$. 
Generally, we cannot hope for a better bound, because if $\{T^t\}$ is not contractive, the approximation by a Yosida approximation will naturally cause the long-term prediction error to accumulate. 
In the special case that $\omega<0$, the error ultimately vanishes; this can be the case, for example, under the conditions of \Cref{prop:stability-spectrum} where the system is homeomorphically transformed to a stable Jacobian dynamics. 
If $Z_0$ and $Z_1$ are both invariant (so that $\{T^t\}$ can be block diagonalized), we can also argue that, if only the original $d$-dimensional states are concerned, the $\omega$ to be considered is the one for $\{T^t|_{Z_1}\}$. 

\begin{proposition}\label{th:observer.performance.2}
    Suppose that $\|T^t\|\lesssim \mr{e}^{\omega t}$ for all $t\in \mR_+$ and some $\omega<0$. Then 
    the observer synthesized from $(\hat{P}, \hat{Y})$ guarantees:
    \begin{equation*}\int_0^\infty |E(z_{\lambda,\tau}(t) - z(t))|^2 \xD{t} < \infty, \end{equation*}
    and hence $E(z_{\lambda,\tau}(t) - z(t)) \to 0$ as $t\to \infty$. 
\end{proposition}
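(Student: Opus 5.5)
The plan is to read $z_{\lambda,\tau}$ in the statement as the trajectory that the synthesized observer actually tracks. That observer is \eqref{eq:observer.Yosida}, with gain $\hat L=\hat P^{-1}\hat Y$, and it is driven by the \emph{true} output $y=Cz$ of \eqref{eq:ACE}. Set $\eta(t):=\hat z_{\lambda,\tau}(t)-z(t)$, where $z(t)=T^tz(0)$. Subtracting $\dot z=Az$ from \eqref{eq:observer.Yosida} and using $y=Cz$ gives the perturbed closed-loop equation
\begin{equation*}
\dot\eta(t)=(A_{\lambda,\tau}-\hat LC)\eta(t)+d(t),\qquad d(t):=(A_{\lambda,\tau}-A)T^tz(0).
\end{equation*}
This is to be understood in the mild sense, with $z(0)\in\mc D(A^2)$ as in \Cref{lem:Yosida}. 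The open-loop quantity $E(z_{\lambda,\tau}-z)$ is then recovered through the triangle inequality together with \Cref{th:observer.performance}.

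First I bound the forcing term. Since $T^t$ commutes with $A$ on $\mc D(A)$, \Cref{lem:Yosida} applied to $T^tz(0)$ gives
\begin{equation*}
\|d(t)\|\lesssim(\lambda^{-1}+\tau^{-1})\bigl(\|T^tz(0)\|+\|T^tAz(0)\|\bigr)\lesssim(\lambda^{-1}+\tau^{-1})\,\mr{e}^{\omega t}\bigl(\|z(0)\|+\|Az(0)\|\bigr).
\end{equation*}
Because $\omega<0$, this makes $d\in L^1(\mR_+,Z)\cap L^2(\mR_+,Z)$. The same bound also controls the purely open-loop discrepancy, whose norm is $\lesssim t\mr{e}^{\omega t}$ and hence is square integrable. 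That estimate serves as a consistency check on the triangle-inequality split.

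Next comes the Lyapunov step with $v(t)=\ip{\eta(t)}{\hat P\eta(t)}$. Differentiating,
\begin{equation*}
\dot v=\ip{\eta}{(\hat P\check A_{\lambda,\tau}+\check A_{\lambda,\tau}^*\hat P)\eta}+2\ip{\hat P\eta}{d},\qquad \check A_{\lambda,\tau}:=A_{\lambda,\tau}-\hat LC.
\end{equation*}
The first term is bounded using the empirical inequality \eqref{eq:oALE.empirical}. As in the proof of the preceding proposition, I would restrict to $\eta(t)\in\ran\hat S$, using the closed-loop dissipativity by empirical sampling and $\ker\hat P\supseteq(\mr{span}\{\rg\kappa_{x_i}\})^\perp$; there the empirical inequality yields the bound $-k|E\eta|^2$. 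Any residual learning error of the form $\ve(1+\|A_{\lambda,\tau}\|)\ip{\eta}{S^{-1}\eta}$ is handled exactly as in \Cref{th:observer.performance}, and I take it to be absorbed. For the cross term I use Cauchy--Schwarz, $2\ip{\hat P\eta}{d}\le2\|\hat P\|^{1/2}\sqrt{v}\,\|d\|$. This gives $\dot v\le-k|E\eta|^2+2\|\hat P\|^{1/2}\sqrt v\,\|d\|$.

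Dropping the negative term yields $\frac{\xD{}}{\xD{t}}\sqrt v\le\|\hat P\|^{1/2}\|d\|$, so $\sqrt{v(t)}\le\sqrt{v(0)}+\|\hat P\|^{1/2}\|d\|_{L^1}$ is uniformly bounded. Integrating the full inequality over $[0,\theta]$ and letting $\theta\to\infty$ then gives
\begin{equation*}
k\int_0^\infty|E\eta|^2\xD{t}\le v(0)+2\|\hat P\|^{1/2}\sup_t\sqrt{v(t)}\,\|d\|_{L^1}<\infty.
\end{equation*}
Finally, $\tfrac{\xD{}}{\xD{t}}E\eta=E\dot\eta$ is bounded, because $A_{\lambda,\tau}$, $\hat L$, $C$ and $E$ are bounded and $\eta$ stays bounded on $\ran\hat S$, where $\hat P$ is coercive. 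So $|E\eta|^2$ is uniformly continuous, and Barbalat's lemma gives $E\eta(t)\to0$.

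The main obstacle is the coercivity bookkeeping. The Lyapunov inequality only controls the $E$-component, while $\hat P$ may be degenerate off $\ran\hat S$, so I must make sure $\eta$ remains in the subspace where $\hat P$ is positive. The forcing $d(t)$ need not lie in $\ran\hat S$. If this step fails, my fallback is to project the error equation with $\hat\Delta$ and use $\hat P(I-\hat\Delta)=0$, so that only $\hat\Delta d$ enters the cross term. That term is still in $L^1$, and the argument above goes through unchanged.
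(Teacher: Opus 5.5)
There is a genuine gap in your main line. In the statement, $z_{\lambda,\tau}$ is the solution of the open-loop Yosida system \eqref{eq:ACE.Yosida}, i.e.\ $z_{\lambda,\tau}(t)=\exp(tA_{\lambda,\tau})z(0)$, so $E(z_{\lambda,\tau}-z)$ does not involve $\hat L$ at all. Your detour through $\eta=\hat z_{\lambda,\tau}-z$ cannot be closed, for two reasons.

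First, recovering $E(z_{\lambda,\tau}-z)$ by the triangle inequality with \Cref{th:observer.performance} does not work. That proposition only bounds $\sup_{\theta>0}\frac{1}{\theta}\int_0^\theta|E(\hat z_{\lambda,\tau}-z_{\lambda,\tau})|^2\,\xD{t}$. This allows $\int_0^\theta$ to grow linearly in $\theta$, so it can never give finiteness of an integral over $\mR_+$. It also holds only with probability $1-\delta$, and under a dissipativity-by-sampling hypothesis that the present proposition does not assume. The same objection defeats ``absorbing'' the learning-error term $\ve(1+\|A_{\lambda,\tau}\|)\ip{\eta}{S^{-1}\eta}$: a bounded positive term integrates to $+\infty$ on $\mR_+$.

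Second, the Lyapunov step is not valid for your $\eta$. It contains $z(t)=\rg\kappa_{x(t)}$, which is generically not in $\ran\hat S=\mr{span}\{\rg\kappa_{x_i}\}_{i=1}^n$, already at $t=0$. So \eqref{eq:oALE.empirical} does not give $-k|E\eta|^2$. The invariance argument of \Cref{rem:dissipativity} also concerns the unforced system only. Your $\hat\Delta$-projection fallback does not repair this. $\hat P(I-\hat\Delta)=0$ removes $\hat P$ off the subspace, but the cross term $\ip{\hat P\hat\Delta\eta}{\check A_{\lambda,\tau}(I-\hat\Delta)\eta}$ and the component $E(I-\hat\Delta)\eta$ of $E\eta$ remain uncontrolled.

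What you called a consistency check is in fact the whole proof, and it is exactly the paper's argument. By \Cref{lem:Yosida}, for $z(0)\in\mc D(A^2)$,
\begin{equation*}
|E(z_{\lambda,\tau}(t)-z(t))| \le \|E\|\,\|\exp(tA_{\lambda,\tau})z(0)-T^tz(0)\| \lesssim \bigl(\lambda^{-1}+\tau^{-1}\bigr)\bigl(\|z(0)\|+\|Az(0)\|\bigr)\,t\,\mr{e}^{\omega t}.
\end{equation*}
Since $\omega<0$, $\int_0^\infty t^2\mr{e}^{2\omega t}\,\xD{t}<\infty$. The same pointwise bound gives $E(z_{\lambda,\tau}(t)-z(t))\to0$ directly, with no need for Barbalat's lemma. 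Drop the observer, the Lyapunov functional, and all sampling and dissipativity hypotheses, and promote that estimate to the proof.
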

\begin{proof}
     By the above discussions, we have $|Ez_{\lambda,\tau}(t)-z(t)|\lesssim (\lambda^{-1}+\tau^{-1})t\mr{e}^{\omega t}$ and hence
     \begin{equation*}\int_0^\infty |E(z_{\lambda,\tau}(t)-z(t))|^2 \xD{t} \lesssim \bra{\frac{1}{\lambda}+ \frac{1}{\tau}}^2 \int_0^\infty t^2 \mr{e}^{\omega t}\xD{t}.\end{equation*}
     When $\omega<0$, the right-hand side is finite. 
\end{proof}

To summarize the above three conclusions, we have the following theorem on the state observation performance. 
\begin{theorem}
    Suppose that the empirical operator Lyapunov inequality \eqref{eq:oALE.empirical} has a solution $(\hat P, \hat Y)$ and hence $\hat L = \hat P^{-1}\hat Y$. Assume that $\{\rg\kappa_{x_i}\}_{i=1}^n$ are linear independent, and the system $(A_{\lambda,\tau}, C)$ is dissipative by sampling and empirical sampling under observer gain $\hat L$. 
    Then, with probably $1-\delta$, if the sample $\{x_i\}_{i=1}^{n}$ with $n>n_{\ve,\delta}$ is drawn i.i.d. from $\nu$, the observed state $\hat x(t)$ from the finite-order observer \eqref{eq:observer.finite-order}, compared to the Yosida-regularized system \eqref{eq:ACE.Yosida}, satisfies a time-averaged squared error bound:
    \begin{equation*}
        \sup_{\theta>0} \frac{1}{\theta}\int_0^\theta |\hat{x}(t) - Ez_{\lambda,\tau}(t)|^2 \xD{t} \lesssim \frac{\ve}{k} \bra{1+\|A_{\lambda,\tau}\|},  
    \end{equation*} 
    where $z_{\lambda,\tau}$ evolves under \eqref{eq:ACE.Yosida}.
    If moreover $\|T^t\|\lesssim \mr{e}^{\omega t}$ for all $t\in \mR_+$ for some $\omega<0$, then the above bound holds for the true system \eqref{eq:ACE}, i.e., 
    \begin{equation*}
        \sup_{\theta>0} \frac{1}{\theta}\int_0^\theta |\hat{x}(t) - x(t)|^2 \xD{t} \lesssim \frac{\ve}{k} \bra{1+\|A_{\lambda,\tau}\|}.  
    \end{equation*} 
\end{theorem}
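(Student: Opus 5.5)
The plan is to split the observation error by the triangle inequality into three pieces, each controlled by one of the three preceding propositions. We write
\begin{equation*}
\hat x(t) - Ez_{\lambda,\tau}(t) = E\bra{\hat S\hat\zeta(t) - \hat z_{\lambda,\tau}(t)} + E\bra{\hat z_{\lambda,\tau}(t) - z_{\lambda,\tau}(t)},
\end{equation*}
where $\hat z_{\lambda,\tau}$ is the state of the infinite-order Yosida-approximate observer \eqref{eq:observer.Yosida} with gain $\hat L$. Applying $|a+b|^2\le 2|a|^2+2|b|^2$ and taking time averages, it suffices to bound each term separately.

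\textbf{Second term.} This is exactly the quantity in \Cref{th:observer.performance}. Its hypotheses are the closed-loop dissipativity by sampling under $\hat L$ and the sample size $n>n_{\ve,\delta}$, on the probability-$(1-\delta)$ event from \Cref{cor:oALE.compact.error}. On that event,
\begin{equation*}
\sup_\theta \frac{1}{\theta}\int_0^\theta |E(\hat z_{\lambda,\tau}-z_{\lambda,\tau})|^2\xD{t} \lesssim \frac{\ve}{k}\bra{1+\|A_{\lambda,\tau}\|}.
\end{equation*}

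\textbf{First term.} This is handled by the finite-order proposition of \Cref{subsec:performance}. It uses the linear independence of $\{\rg\kappa_{x_i}\}$, the dissipativity by empirical sampling, and the fact that $\hat P$ may be taken finite-rank on the span of the samples (deferred to \Cref{subsec:elementary}, which we assume). It yields $\int_0^\infty |E(\hat z_{\lambda,\tau}-\hat S\hat\zeta)|^2\xD{t}=:M<\infty$. Hence the time average over $[0,\theta]$ is at most $M/\theta$.

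\textbf{Main obstacle.} The first term is where the difficulty lies: the bound $M/\theta$ is not uniform as $\theta\downarrow 0$, and it is not proportional to $\ve$. I would handle this in one of two ways:
\begin{enumerate}[label=(\roman*)]
    \item initialize $\hat z_{\lambda,\tau}(0)=\hat S\hat\zeta(0)$, so that the error starts at zero and its instantaneous size is small near $t=0$;
    \item absorb $M/\theta$ into the implied constant, with the understanding that the supremum is effectively over $\theta$ bounded below, or that the "$\lesssim$" tolerates this initial-condition constant as in the earlier proposition, where $v(0)$ was likewise absorbed into "const".
\end{enumerate}
I expect (i) to be cleanest. With $\eta(0)=0\in\ran\hat S$, the Lyapunov argument of that proposition gives $v(t)\le v(0)=0$ along with the integral bound, so the first term contributes nothing beyond the constant. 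Combining the two terms then gives the first displayed estimate.

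\textbf{Second claim.} Here I would add and subtract once more:
\begin{equation*}
\hat x - x = (\hat x - Ez_{\lambda,\tau}) + E(z_{\lambda,\tau}-z),
\end{equation*}
using $x=Ez$ from \eqref{eq:ACE}. By \Cref{th:observer.performance.2}, when $\omega<0$ the last term is square-integrable over $\mR_+$. Its time average is therefore $O(1/\theta)$, and it is in fact small in $(\lambda^{-1}+\tau^{-1})^2$, so it can be absorbed, given $\lambda,\tau$ chosen large relative to $\ve$. This step needs the initial condition in $\mc D(A^2)$ as \Cref{lem:Yosida} requires, which holds for $z(0)=\rg\kappa_{x(0)}$ under sufficient smoothness, and I would state this as implicit. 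Then $|a+b|^2\le 2|a|^2+2|b|^2$ again gives the final bound.
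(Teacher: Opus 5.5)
Your plan matches the paper's. The theorem has no separate proof and is presented as a summary of \Cref{th:observer.performance}, the finite-order proposition of \Cref{subsec:performance}, and \Cref{th:observer.performance.2}, combined through exactly your splitting $\hat x - x = E(\hat S\hat\zeta-\hat z_{\lambda,\tau}) + E(\hat z_{\lambda,\tau}-z_{\lambda,\tau}) + E(z_{\lambda,\tau}-z)$. The caveats you raise are genuine and the paper passes over them silently: the $\theta\downarrow 0$ limit of the time average is the initial estimation error, which is not $O(\ve)$, and your choice $\hat z_{\lambda,\tau}(0)=\hat S\hat\zeta(0)$ only moves it into the initial-value term of \Cref{th:observer.performance}, which after time-averaging is $v(0)/\theta$. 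Likewise, the Yosida error of order $(\lambda^{-1}+\tau^{-1})^2$ can only be absorbed under an extra coupling of $\lambda,\tau$ with $\ve$, so your version is, if anything, more careful than the original.
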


\subsection{Elementary Expressions and Solution} \label{subsec:elementary}
Following the discussions in \Cref{subsec:compactified}, it becomes obvious that the computational tasks for the observer synthesis boils down on finite-order matrices. 
That is, we can let $P$ and $Y$ have the following matrix representations, where the infinite-dimensional $Z$ is reduced to the finite-dimensional span of the data-based kernel functions. 
\begin{equation*}
    \hat P = \sum_{i,j=1}^n \ms{P}_{ij} \rg\kappa_{x_i}\times \rg\kappa_{x_j} , \enspace \hat Y = \sum_{i=1}^n \sum_{j=1}^m \ms{Y}_{ij} \rg\kappa_{x_i} \times \ms{e}_j.
\end{equation*}  
Also, we note that $C = \sum_{i=1}^m \ms{e}_i\times h_i$ and $E = \sum_{i=1}^d \ms{e}_i\times e_i$. 
For computation, we introduce the kernel matrices:
\begin{equation}\label{eq:kernel.matrices}
    \ms{G} = \Bra{\ip{\rg\kappa_{x_i}}{\rg\kappa_{x_j}}}_{i,j=1}^n = \Bra{\rg\kappa(x_i, x_j)}_{i,j=1}^n, \enspace
    \ms{G}' = \Bra{\ip{\rg\kappa'_{x_i}}{\rg\kappa_{x_j}}}_{i,j=1}^n, 
\end{equation}
in which, for any two points $x_i$ and $x_j$, 
\begin{equation*}
\ip{\rg\kappa'_{x_i}}{\rg\kappa_{x_j}} = \lambda^2\int_0^\tau \mr{e}^{-\lambda t}\rg\kappa(x_i(t), x_j)\, \xD{t} - \lambda \rg\kappa(x_i, x_j). 
\end{equation*}
We also write the data matrices:
\begin{equation}\label{eq:data.matrices}
\begin{aligned}
    \ms{X} = \begin{bmatrix} x_1&\cdots & x_n \end{bmatrix}, \enspace 
    \ms{H} = \Bra{h_i(x_j)}_{1\leq i\leq m, 1\leq j\leq n}, 
\end{aligned}
\end{equation}

\par Then it can be verified that the first term in the empirical Lyapunov inequality \eqref{eq:oALE.empirical} is re-expressed by
\begin{equation*} 
\hat S \hat P \hat R_{\lambda,\tau} = \frac{1}{n^2} \sum_{i,j=1}^n \bra{ \ms{G}\ms{P}\ms{G}'^\top }_{ij} \rg\kappa_{x_i}\times \rg\kappa_{x_j}. 
\end{equation*}
The third term in \eqref{eq:oALE.empirical} becomes
\begin{equation*}
\hat{S}\hat{Y}C\hat{S} = \frac{1}{n^2} \sum_{i,j=1}^n \bra{ \ms{G} \ms{Y}\ms{H}}_{ij} \rg\kappa_{x_i}\times \rg\kappa_{x_j},
\end{equation*}
and the fifth term is written as 
\begin{equation*}
k\hat{S}E^*ES = \frac{k}{n^2}\sum_{i,j=1}^n \bra{\ms X^\top \ms X}_{ij} \rg\kappa_{x_i}\times \rg\kappa_{x_j}.
\end{equation*}
Now the empirical operator Lyapunov equality \eqref{eq:oALE.empirical} is fully converted to a linear matrix inequality about $(\ms{P}, \ms{Y})$:
\begin{equation}\label{eq:elementary.oALE}
\ms{G}\ms{P}\ms{G}'^\top + \ms{G}'\ms{P}\ms{G}^\top - \ms{G}\ms{Y}\ms{H} - \ms{H}^\top\ms{Y}^\top\ms{G}^\top + k\ms{X}^\top\ms{X} \preceq 0.
\end{equation}
To impose the positive definiteness constraint \eqref{eq:P.psd} on $P|_{Z_1}$, we know that it suffices to require $\sum_{k,\ell=1}^d a_ka_\ell \ip{e_k}{Pe_\ell} \geq \sum_{k=1}^d a_k^2$ to hold for all $a\in \mR^d$. This reduces to
\begin{equation}\label{eq:elementary.P.psd}
    \ms{X}\ms{P}\ms{X}^\top - \ms{I} \succeq 0, \enspace \ms{P}\succeq 0. 
\end{equation}
Therefore, the solution boils down to a pair of matrices $\ms{P}\in \mR^{n\times n}$ and $\ms{Y}\in \mR^{d\times m}$ such that \eqref{eq:elementary.oALE} and \eqref{eq:elementary.P.psd} are satisfied. 
To facilitate convex optimization solvers, it may be beneficial to set an objective function $J(\ms{P})$ to be minimized, e.g., $\operatorname{tr}(\ms{P})$, in the semidefinite programming problem:
\begin{equation}\label{eq:elementary.optimization}
    \min_{\ms{P}\in\mR^{n\times n}, \ms{Y}\in\mR^{n\times m}} \operatorname{tr}(\ms{P}) \quad \text{s.t. }  \eqref{eq:elementary.oALE} \text{ and } \eqref{eq:elementary.P.psd}. 
\end{equation}

\par Once solved, the expression of $L$ can be obtained:
\begin{equation}\label{eq:elementary.L}
    L = \sum_{i=1}^n\sum_{j=1}^m \ms{L}_{ij} \rg\kappa_{x_i}\times \ms{e}_j, \text{ where } \ms{L}=(\ms{GPG})^{-1}\ms{G} \ms{Y} = (\ms{P}\ms{G})^{-1}\ms{Y}. 
\end{equation}
Finally, we plug the above expression into the finite-order-approximate observer \eqref{eq:observer.finite-order}. With the finite-dimensional expression $\hat\zeta(t) = \sum_{i=1}^n \xi_i(t)\rg\kappa_{x_i}$ therein, it suffices that 
\begin{equation*}\ip{\rg\kappa_{x_i}}{\hat S \dot{\hat{\zeta}}(t)} = \ip{\rg\kappa_{x_i}}{\hat R_{\lambda,\tau}\hat\zeta + \hat L(y-C\hat S\hat\zeta)}, \, \forall i=1,\cdots,n.\end{equation*}
After elementary calculation, the observer is simplified to:
\begin{equation*}
\begin{aligned}
    \frac{1}{n}\ms G^2\dot\xi(t) = \frac{1}{n}\ms{G}'^\top \ms{G}\xi(t) + \underbrace{ \ms{G}\ms{L} }_{\ms{P}^{-1}\ms{Y}} \bra{y(t) - \frac{1}{n}\ms{HG}\xi(t)}, \enspace
    \hat{x}(t) = \frac{1}{n}\ms{XG}\xi(t).
\end{aligned} 
\end{equation*}
By letting $\ms{z}(t) = \frac{1}{n}\ms{G}\xi(t)$, we reach at 
\begin{equation}\label{eq:observer.final}
\begin{aligned}
    \ms{G}\dot{\ms{z}}(t) = \ms{G}'^\top \ms{z}(t) + \ms{P}^{-1}\ms{Y} \bra{ y(t)-\ms{H}\ms{z}(t) }, \enspace
    \hat{x}(t)= \ms{X}\ms{z}(t). 
\end{aligned}
\end{equation}

\begin{remark}
    The finally obtained finite-order observer has an intuitive interpretation. The $n$-dimensional vector $\ms{z}$ can be thought of as the vector of weights of each sample point in the state estimate, so that $\ms{X}\ms{z}$ is the returned $\hat x$. The innovation quantity that drives the evolution is $y(t)-\ms{H}\ms{z}(t)$, where $\hat y(t)=\ms{H}\ms{z}(t)$ is the weighted average of predicted output contributed by the sample points. 
    The appearance of kernel matrix $\ms{G}$ reflects the topology of RKHS, and $\ms{G}'^\top$ is a Yosida approximation of the system's evolution when lifted into the RKHS. The Luenberger observer gain $\ms{P}^{-1}\ms{Y}$ resembles that would be expected from a finite-dimensional linear system. 
\end{remark}

\section{Numerical Examples}\label{sec:numerical}
In this section, three systems with different behaviors on their state spaces are studied. The first system is an asymptotically stable one, the second has a limit cycle, while the third exhibits a chaotic behavior. 
As should be emphasized, the purpose is to demonstrate that the proposed Koopman--Luenberger provides a \emph{uniform framework for observer synthesis from data despite the severe nonlinearity} in a computationally efficient way, instead of outperforming the existing system-specific, model-based, or even fine-tuned state observers. 
Naturally, if a precise model on the state space is known, if a particular type of state-space geometry is considered, or if the observer can be fine-tuned with computationally expensive methods (e.g., deep learning), then it is always possible to reach a comparable performance. 

For the reader's information, we show the performance of Koopman--Luenberger observer (by their integrated squared errors $\text{ISE} = \int_0^{t_{\max}} |\hat{x}(t)-x(t)|^2 \xD{t}$ over the simulation interval $[0, t_{\max}]$), in comparison to:
\begin{enumerate}[label=(\roman*)]
    \item a linear model-based Luenberger observer, where we linearize the system at the origin to obtain $(A_\mr{lin}, C_\mr{lin})$, solve the linear matrix inequality $P_{\mathrm{lin}} A_{\mathrm{lin}} + A_{\mathrm{lin}}^\top P_{\mathrm{lin}} - Y_{\mathrm{lin}} C_\mr{lin} - C_\mr{lin}^\top Y_{\mathrm{lin}}^\top + k I \preceq 0$ using the same $k$ constant with the objective of minimizing $\operatorname{tr}(P_{\mr{lin}})$, and let $L_{\mr{lin}} = P_{\mr{lin}}^{-1}Y_{\mr{lin}}$ be the gain; 
    \item a nonlinear model-based Luenberger observer $\dot{\hat{x}} = f(\hat{x}) + L_{\mr{lin}}(y-h(\hat{x}))$, assuming that the ground-truth nonlinear model is completely known and using the linear Luenberger gain;  
    \item a Kazantzis--Kravaris/Luenberger observer \cite{kazantzis1998nonlinear}: 
    \begin{equation*}\dot{z}(t) = A_{\mr{KKL}}z(t) + B_{\mr{KKL}} y(t), \enspace \hat{x}(t) = \mr{NN}(z(t)), \end{equation*}
    where $\mr{NN}$ refers to a nonlinear mapping trained as a neural network (using the same trajectory data as used for Koopman--Luenberger observer). We used $32$ neurons in $2$ hidden layers to ensure a good approximation capacity within a comparable training time. In theory \cite{andrieu2006existence, pachy2024existence}, the mapping from $x$ to $z$ is an injection and hence invertible. 
\end{enumerate} 
The results are summarized in \Cref{tab:observer_comparison}.\footnote{ 
The codes to generate the results are made available at the author's GitHub repository: \url{https://github.com/WentaoTang-Pack/KoopmanLuenbergerObserver}, and are run in Python 3.13.2 with Visual Studio Code on a MacBook Pro laptop with an Apple M4 Max chip and 14 cores (10 performance and 4 efficiency). The semidefinite programming solver used for \eqref{eq:elementary.optimization} is MOSEK 11.2.4.}

\begin{table}[!t]
\centering
\caption{Comparison of state observation performance and training time.}
\label{tab:observer_comparison}
\setlength{\tabcolsep}{6pt}
\renewcommand{\arraystretch}{1.1}
\begin{tabular}{llrr}
\toprule
\textbf{Example} & \textbf{Observer} & \textbf{ISE} & \textbf{Time (s)} \\
\midrule
\multirow{4}{*}{%
  \shortstack[l]{%
    Ex.~1
  }%
}
  & Linear model-based             & $0.7348$ & $-$  \\
  & Nonlinear model-based          & $0.5839$ & $-$  \\
  & Neural-network KKL             & $0.5015$ & $5.0$ \\
  & Koopman--Luenberger             & $0.4619$ & $0.25$ \\
\midrule
\multirow{4}{*}{%
  \shortstack[l]{%
    Ex.~2
  }%
}
  & Linear model-based             & $1.0493$ & $-$   \\
  & Nonlinear model-based          & $0.1577$ & $-$   \\
  & Neural-network KKL             & $0.6084$ & $5.2$  \\
  & Koopman--Luenberger             & $0.4233$ & $0.25$  \\
\midrule
\multirow{4}{*}{%
  \shortstack[l]{%
    Ex.~3
  }%
}
  & Linear model-based             & $21580$        & $-$    \\
  & Nonlinear model-based          & $58$            & $-$    \\
  & Neural-network KKL             & $1531$         & $7.8$  \\
  & Koopman--Luenberger             & $277$         & $1.7$  \\
\bottomrule
\end{tabular}
\end{table}

\subsection{An Asymptotically Stable System}
The model considered here is an isothermic continuously stirred tank reactor, assumed to be perfectly mixed, with a fractional reaction rate depending on the concentration of the reactant ($x_1$) and independent from the product concentration ($x_2$). The state coordinate is shifted so that the origin is an equilibrium point. The equations are:
\begin{equation*}\dot x_1 = \frac{3-x_1}{4} - \frac{9(1+x_1)}{4(3+2x_1)}, \enspace \dot x_2 = -\frac{3(1+x_2)}{4} + \frac{9(1+x_1)}{4(3+2x_1)} \end{equation*}
where only $y=x_1$ is measured. The state space is chosen as $\mbb X = [-1, 1]\times [-1,1]$. From its Jacobian at the origin, we know that the system is asymptotically stable. 

\par For the linear--radial kernel, we use a smoothness index $s=2.5$ and a length scale $\ell=1.0$ that matches the size of the sampling region. It was found that the tuning of $s$ and $\ell$ does not have a significant impact on the observer performance (and for brevity, these results are omitted). 
For finite-time Yosida approximation, we used $\lambda=5$ and $\tau=3$. The effect of these two parameters is shown in \Cref{fig:tuning.Yosida}, where $A_{\mr{cl}}$ refers to the closed-loop matrix $\ms{G}^{-1}\bra{\ms{G}'^\top - \ms{P}^{-1}\ms{Y}\ms{H}}$ of the observer \eqref{eq:observer.final}. 
Roughly, when $\lambda\tau>30$, the ill-conditioned kernel matrices causes the observer simulation to become numerically unstable and hence the performance cannot be evaluated (marked by a cross-out symbol); this is the region of parameters that should be avoided, and within the safe region, the performance is also robust to parameter choices. 
While $\max \Re(A_{\mr{cl}})$ (the maximum real part of the eigenvalues) is not directly related to the state observation performance (since the state is $d$-dimensional while the observer state $\ms{z}$ is in an $n$-dimensional space), it is desirable to have an empirically Schur matrix $A_{\mr{cl}}$. Hence, the choice of $(\lambda,\tau)=(5,3)$ is fair. 
\begin{figure}[t!]
    \centering
    \begin{subfigure}[t]{\columnwidth}
        \includegraphics[width=\columnwidth]{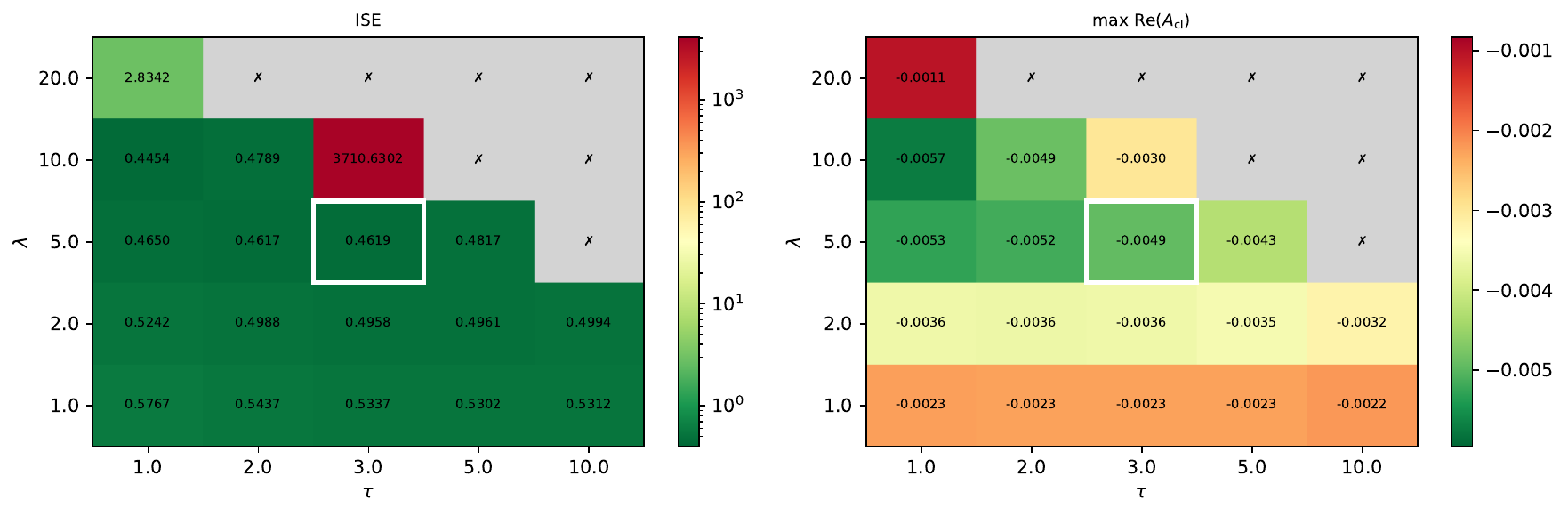}
        \caption{Yosida approximation parameters $(\lambda,\tau)$}
        \label{fig:tuning.Yosida}
    \end{subfigure} \\
    \begin{subfigure}[t]{0.48\columnwidth}
        \includegraphics[width=\columnwidth]{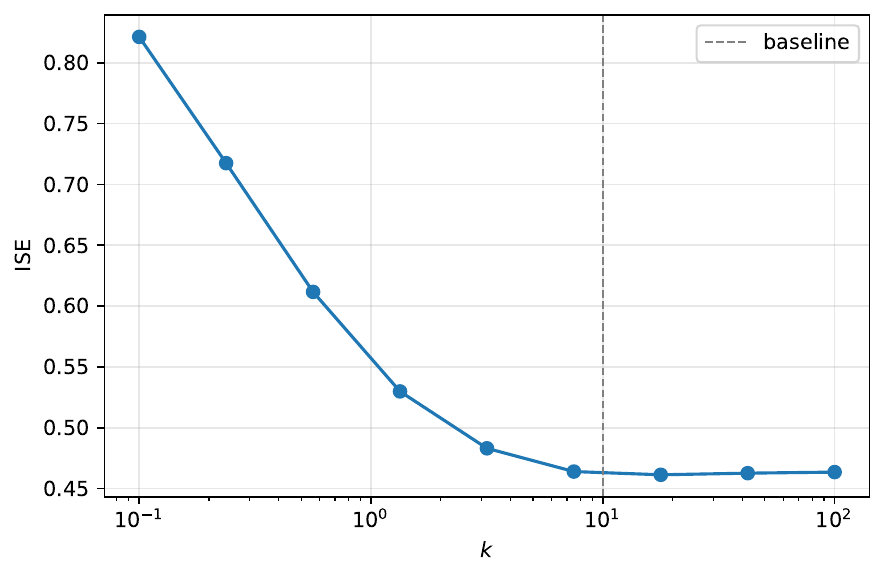}
        \caption{Forcing parameter $k$.}
        \label{fig:tuning.forcing}
    \end{subfigure}\hfill
    \begin{subfigure}[t]{0.48\columnwidth}
        \includegraphics[width=\columnwidth]{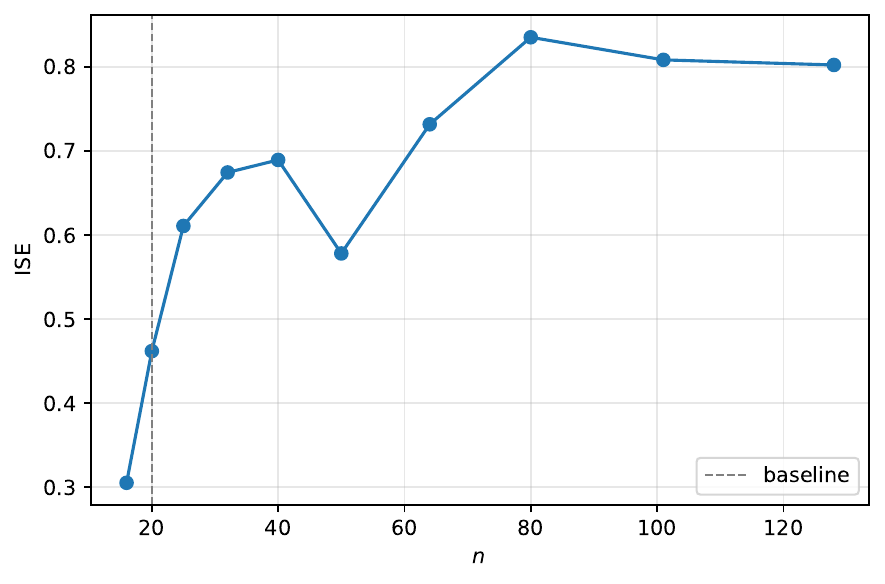}
        \caption{Sample size $n$.}
        \label{fig:tuning.sample-size}
    \end{subfigure}
    \caption{Hyperparameter tuning in observer synthesis in Example 1.}\label{fig:tuning}
\end{figure}

\par \Cref{fig:tuning.forcing} shows the dependence of performance metric (ISE) on the forcing parameter $k$ in the synthesis formulation \eqref{eq:observer.Yosida}. Since larger $k$ represents the need for a more aggressive compensation by the innovation quantity ($y-C\hat z$), the ISE curve is a decreasing function of $k$ as expected. At $k=10$, the ISE has reached a plateau. 
Interestingly, as shown by \Cref{fig:tuning.sample-size}, the observer performance tends to be better when the sample size is even smaller (except when $n$ is extremely low). In this study, we choose $n=20$ as the baseline. This owes to the conditioning issue -- at larger $n$, the kernel matrix that arises from the empirical sampling operator $\hat S$ becomes more ill-conditioned, and hence for the computational solution of the LMI \eqref{eq:elementary.oALE}, one must replace $\ms{G}$ by a Lavrentiev-regularized $\ms{G}+\epsilon_1 \ms{I}$, enforce $\ms{P}\succeq \epsilon_2 \ms{I}$, and tighten the right-hand side of \eqref{eq:elementary.oALE} by $-\epsilon_3\ms{I}$ for some small $\epsilon_{1,2,3}>0$. 
These treatments inevitably introduces sub-optimality to the resulting observer. 

\par Finally, the comparison of four different types of observers are shown in the simulated trajectories in \Cref{fig:comparison-1}, where all observer states are initialized at the origin. 
The two model-based observers achieve faster decay in observation error in the long run, due to their knowledge of the linearized model near the origin, while at the incipient stage, the neural KKL observer and Koopman--Luenberger observer track the true states better. 
The proposed Koopman--Luenberger observer, within more than one order-of-magnitude lower training time budget than the neural network approach, achieves asymptotically vanishing error and a better performance. 
\begin{figure}
    \centering
    \includegraphics[width=\columnwidth]{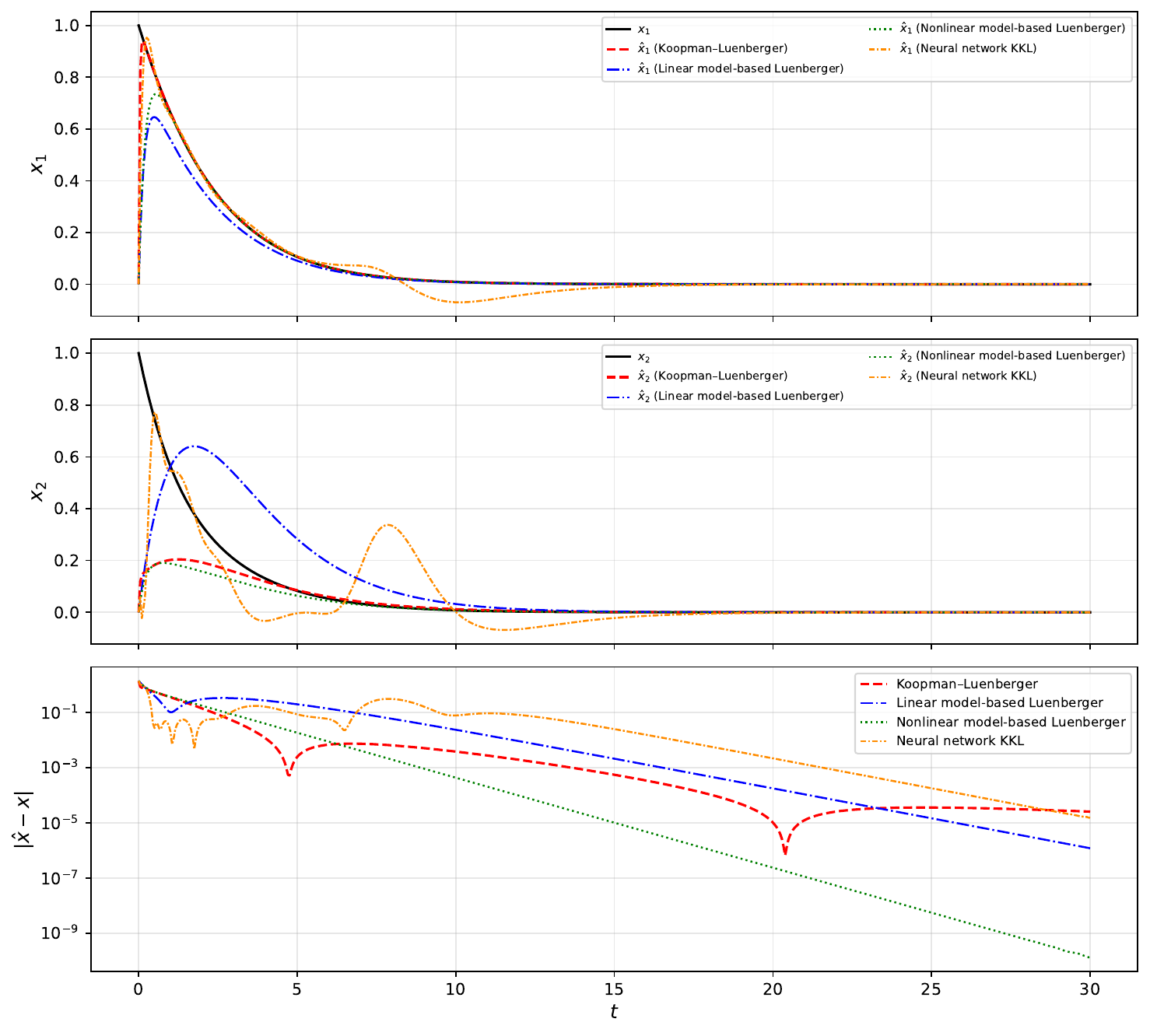}
    \caption{Simulated trajectories of observers for Example 1.}
    \label{fig:comparison-1}
\end{figure}

\subsection{A Limit Cycle System}
For the van der Pol oscillator, the origin is unstable and a limit cycle behavior can be seen. We use the model:
\begin{equation*}\dot x_1 = x_2, \enspace \dot x_2 = 0.5(1-9x_1^2)x_2 -x_1,\end{equation*}
and assume that only $y=x_1$ is measured. 
All hyperparameters used remain identical to the previous example, and the effects of tuning are found to have similar patterns as before. Hence the tuning plots are not presented. We again use a sample size of $n=20$. 
\begin{figure}[!t]
    \centering
    \includegraphics[width=\columnwidth]{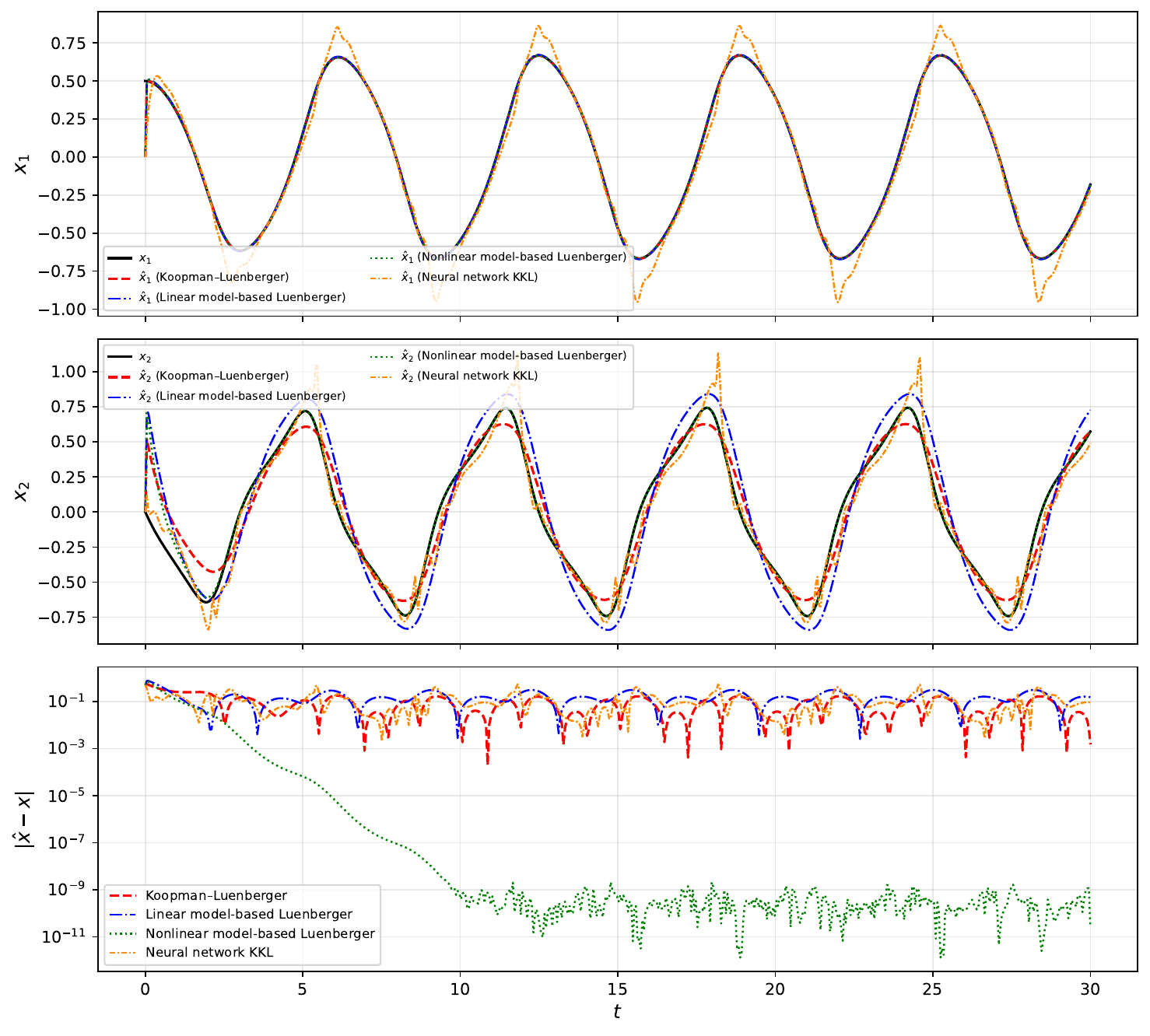}
    \caption{Simulated trajectories of observers for Example 2.}
    \label{fig:comparison-2}
\end{figure}

The comparison plots for the four different observers are provided in \Cref{fig:comparison-2}. It is still noticed that the proposed Koopman--Luenberger observer achieves a performance that is better than a linearized model-based Luenberger observer, and also better than a neural KKL observer trained using the same amount of data (after $5000$ epochs). The time needed to train (optimize) a Koopman--Luenberger observer is much shorter than that for neural networks. 
Only under a perfect nonlinear model, the observer performance surpassed the Koopman--Luenberger observer.

\subsection{A Chaotic System}
We examine the classical Lorenz system, where the nonlinearity is considered to be more severe than the previous two examples:
\begin{equation*}\dot x_1 = 10(x_2-x_1), \, \dot x_2 = x_1(28-x_3)-x_2, \, \dot x_3 = x_1x_2 - (8/3)x_3,\end{equation*}
Among the three states, $y_1=x_1$ and $y_2=x_3$ are considered to be measurable. 
In this example, if the data points are sampled in a hypercube centered at the origin, then such a hypercube must be very large to include the Lorenz attractor and hence the points are sparse and tend to be far away from the attractor. 
Therefore, instead of directly generating the sample in a hypercube, a burn-in trick is used, i.e., starting from these random points, a simulation of $10$ time units is first carried out to settle the state close enough to the attractor, and the final states are adopted as the sample. 
 
Within the same sample size and the same amount of training epochs as the previous two examples, it takes slightly more time to train the neural KKL observer due to the higher state dimension, while the computational time for solving the semidefinite programming problem for Koopman--Luenberger observer also becomes higher. 
The two training processes are completed in a similar amount of time ($10.1$ versus $12.8$ seconds), and in this case the Koopman--Luenberger observer achieves a better performance in this example. The performance improvement over the neural approach is $5.53$-fold (gauged by ISE) or $2.35$-fold (by root mean squared error).  
\begin{figure}[!t]
    \centering
    \includegraphics[width=\columnwidth]{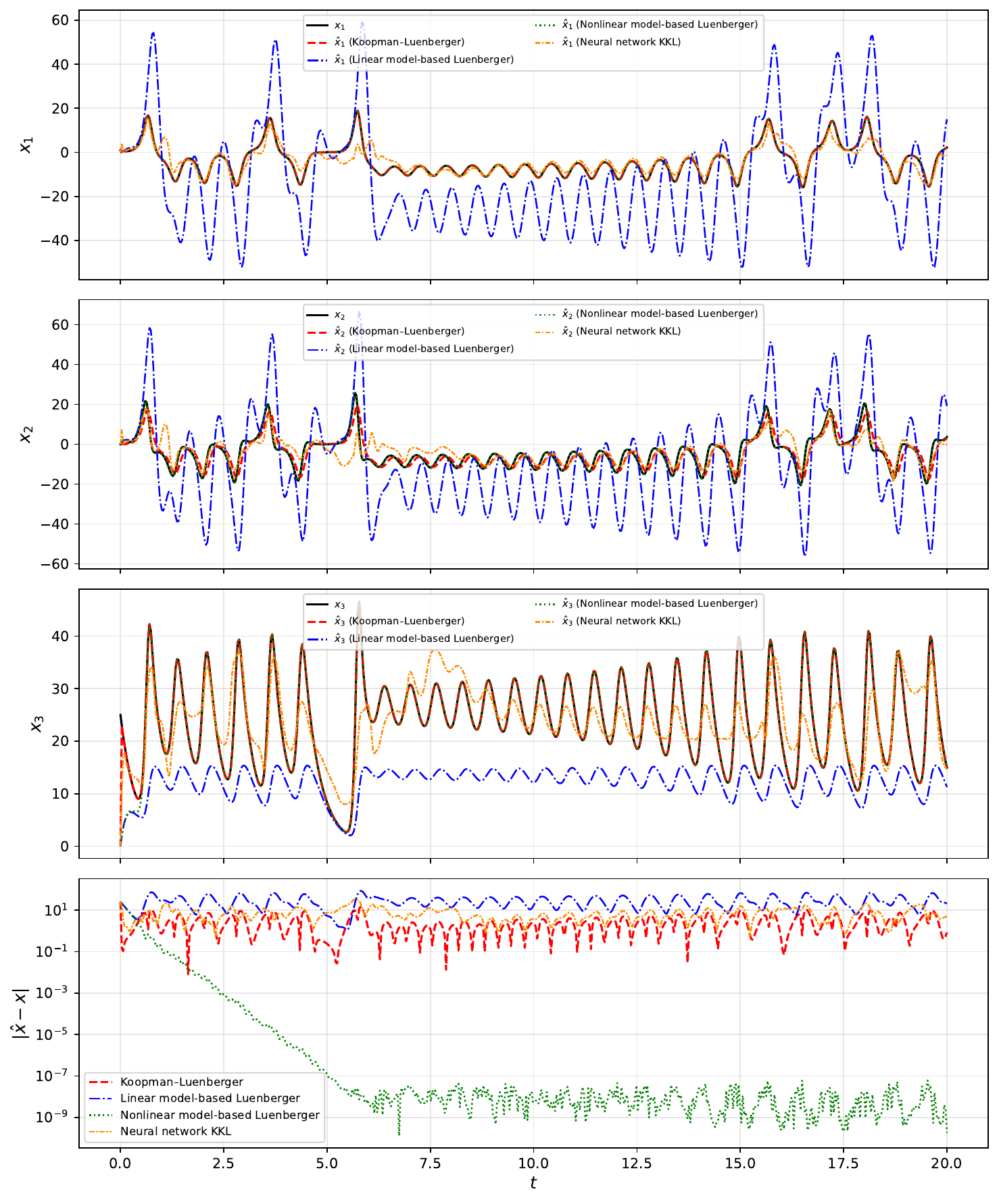}
    \caption{Simulated trajectories of observers for Example 3.}
    \label{fig:comparison-3}
\end{figure}

\section{Discussions and Conclusions}\label{sec:conclusions}
This paper aims to give a systematic treatment to the problem of designing a Luenberger-type state observer for nonlinear systems, represented as infinite-dimensional linear systems via Koopman operator semigroup and its adjoint.  
The Koopman operator semigroup is defined on an RKHS whose reproducing kernel is a linear--radial kernel \cite{tang-ye2025koopman}. 
The Koopman-version concepts of exact observability, approximate observability, detectability, strong detectability, and exponentially detectability are defined and the theoretical relations are studied. 
An operator Lyapunov inequality is formulated to specify a Koopman--Luenberger observer, and towards data-driven solution, the inequality is approximated in a Yosida form and further projected to a finite-order form. 
Theoretical analysis justifies these approximations with a guarantee (in a probabilistic sense) of bounded state observation error in time average, and numerical examples show the applicability to nonlinear systems in wide generality with outstanding computational efficiency.

\par It may be of interest to remark the conceptual connections and distinctions between Kazantzis--Kravaris/Luenberger observer \cite{andrieu2006existence} and the Koopman--Luenberger observer contrived in this paper. 
In both categories, the observer state is made to have a linear time-invariant (LTI) dynamics, driven by the measured output variables. For this construction, the state $x$ has to be transformed nonlinearly and then restored from the transformed appearance $z$. The difference lies in that in KKL, the observer uses a finite-dimensional $z$, whose dimension is higher than or can be equal to that of $x$ \cite{pachy2024existence}, and the inversion from $z$ to $x$ must be a nonlinear mapping; in contrast, in Koopman--Luenberger, with an RKHS construction, the retrieval of $x$ from an infinite-dimensional $z$ is completely linear. 
Such a construction difference affects how the solution takes place. In KKL, the solution of the nonlinear pseudoinverse mapping is the key difficulty, typically addressed by polynomial series (i.e., using a linear combination of monomial features) \cite{kazantzis1998nonlinear} or by neural networks that universally approximates nonlinear mappings via nonlinear parameterization \cite{niazi2022learning, tang2024synthesis, woelk2026neural}; in Koopman--Luenberger, the solution of a Luenberger gain boils down to a convex optimization problem. 
\emph{The conceptual relation between KKL and Koopman--Luenberger observers is analogous to that between nonlinear regression (via hand-crafted features or nonlinear parameterization) and kernel-based regression techniques in statistical learning}. The former is restrictive, dependent on the feature selection, and gives rise to nonconvexity, but may be accurately trainable given intensive computational effort. The latter is general, convex, essentially nonparametric, and therefore can be more subject to overfitting, which explains the need of extra dissipativity conditions (\Cref{def:dissipative}). 

\par The readers may have noted that in the differential-geometric theory of nonlinear systems, the observability and controllability concepts have long been formulated in the language of Lie algebra \cite{isidori1985nonlinear, jurdjevic1997geometric}. It may be worth studying the connection of Koopman observability concepts with the geometric ones. 
Another important question is the observation and control for systems with inputs, especially the bilinearity issue that arises therefrom.

\section*{Acknowledgments}
The author acknowledges the use of Claude Code (Anthropic), powered by Claude Sonnet 4.6, for assistance with code development in this work. All AI-generated code was reviewed, tested, and validated by the author, who takes full responsibility for its correctness.

\bibliographystyle{siamplain}
\bibliography{koopman_sicon}

\end{document}